\newtheorem{theorem}{Theorem}[section]
\newtheorem{lemma}[theorem]{Lemma}
\newtheorem{corollary}[theorem]{Corollary}
\newtheorem{proposition}[theorem]{Proposition}
\providecommand{\customgenericname}{}
\newcommand{\newcustomtheorem}[2]{%
  \newenvironment{#1}[1]
  {%
   \renewcommand\customgenericname{#2}%
   \renewcommand\theinnercustomgeneric{\kern-0.3em ##1}%
   \innercustomgeneric
  }
  {\endinnercustomgeneric}
}
\crefname{specialtheorem}{}{}
\theoremstyle{definition}
\newtheorem{definition}[theorem]{Definition}
\theoremstyle{remark}
\newtheorem{remark}[theorem]{Remark}
\newtheorem*{remark*}{Remark}
\newtheorem{example}[theorem]{Example}
\newtheorem*{example*}{example}
\newcommand{\Z}{\mathbb{Z}}
\newcommand{\Q}{\mathbb{Q}}
\renewcommand{\C}{\mathbb{C}}
\newcommand{\F}{\mathbb{F}}
\newcommand{\m}{\mathfrak{m}}
\renewcommand{\O}{\mathcal{O}}
\newcommand{\rightiso}{\xrightarrow{\sim}}
\newcommand{\iso}{\simeq}
\newcommand{\xrightiso}[1]{\xrightarrow[\raisebox{1pt}{$\sim$}]{#1}}
\newcommand{\defeq}{\vcentcolon=}
\newcommand{\eqdef}{=\vcentcolon}
\newcommand{\isequal}{\stackrel{?}{=}}
\newcommand{\qbinom}[3]{\genfrac{[}{]}{0pt}{}{#1}{#2}_{#3}}
\begin{document}

\title{Spherical functions of symmetric forms and a conjecture of Hironaka}
\author{Murilo Corato-Zanarella}

\begin{abstract}
For all $r\ge1,$ we verify the following conjecture of Hironaka \cite{Hironaka2}: for a $p$-adic field $F$ with $p$ odd, the space of spherical functions of $\mathrm{Sym}_{r\times r}(F)\cap\mathrm{GL}_r(F)$ is free of rank $4^r$ over the Hecke algebra.
\end{abstract}
\maketitle
\setcounter{tocdepth}{2}
\let\oldtocsection=\tocsection
\let\oldtocsubsection=\tocsubsection
\let\oldtocsubsubsection=\tocsubsubsection
\renewcommand{\tocsection}[2]{\hspace{0em}\oldtocsection{#1}{#2}}
\renewcommand{\tocsubsection}[2]{\hspace{1em}\oldtocsubsection{#1}{#2}}
\renewcommand{\tocsubsubsection}[2]{\hspace{2em}\oldtocsubsubsection{#1}{#2}}
\tableofcontents

\newcommand{\inv}{\mathrm{inv}}
\newcommand{\Lat}{\mathrm{Lat}}
\newcommand{\Typ}{\mathrm{Typ}}
\newcommand{\Rel}{\mathrm{Rel}}
\newcommand{\pair}[2]{\left\{#1,#2\right\}}
\newcommand{\cfactor}{\gamma}

\newenvironment{case}[1]{\begin{description}[leftmargin=0pt]\item[Case \boldmath\ensuremath{#1}]}{\end{description}}
\newcommand{\Ugen}{\ensuremath{\mathrm{(uH)}}\xspace}
\newcommand{\Ogen}{\ensuremath{\mathrm{(S)}}\xspace}
\newcommand{\SPgen}{\ensuremath{\mathrm{(A)}}\xspace}

\newcommand{\smallarray}[2]{\ifx\dummy#1\dummy #2\else\ifx\dummy#2\dummy #1\else\begin{array}{@{\,}c@{\,}}#1\\[-5pt]#2\end{array}\fi\fi}
\newcommand{\smallarrayl}[2]{\ifx\dummy#1\dummy #2\,\else\ifx\dummy#2\dummy #1\,\else\begin{array}{@{}c@{\,}}#1\\[-5pt]#2\end{array}\fi\fi}
\newcommand{\smallarrayr}[2]{\ifx\dummy#1\dummy\,#2\else\ifx\dummy#2\dummy\,#1\else\begin{array}{@{\,}c@{}}#1\\[-5pt]#2\end{array}\fi\fi}
\newcommand{\smallarraylr}[2]{\ifx\dummy#1\dummy\,#2\,\else\ifx\dummy#2\dummy\,#1\,\else\begin{array}{@{}c@{}}#1\\[-5pt]#2\end{array}\fi\fi}

\newcommand{\TypeCC}[2]{\left(\smallarraylr{#1}{#2}\right)}
\newcommand{\TypeC}[4]{\left(\smallarrayl{#1}{#2},\smallarrayr{#3}{#4}\right)}
\newcommand{\Typec}[6]{\left(\smallarrayl{#1}{#2},\smallarray{#3}{#4},\smallarrayr{#5}{#6}\right)}
\newcommand{\Typecc}[8]{\left(\smallarrayl{#1}{#2},\smallarray{#3}{#4},\smallarray{#5}{#6},\smallarrayr{#7}{#8}\right)}
\newcommand{\Type}[2]{\left(\cdots,\smallarray{#1}{#2},\cdots\right)}
\newcommand{\Typee}[4]{\left(\cdots,\smallarray{#1}{#2},\smallarray{#3}{#4},\cdots\right)}
\newcommand{\Typeee}[6]{\left(\cdots,\smallarray{#1}{#2},\smallarray{#3}{#4},\smallarray{#5}{#6},\cdots\right)}
\newcommand{\typeC}[2]{\left(#1,#2\right)}
\newcommand{\typec}[3]{\left(#1,#2,#3\right)}
\newcommand{\typecc}[4]{\left(#1,#2,#3,#4\right)}
\newcommand{\type}[1]{\left(\cdots,#1,\cdots\right)}
\newcommand{\typee}[2]{\left(\cdots,#1,#2,\cdots\right)}

\section{Introduction}
\subsection{Main result}
Let $F$ be a finite extension of $\Q_p$ for some prime $p>2.$ Let $r\ge1$ be an integer. We consider the space
\begin{equation*}
    X\defeq\mathrm{Sym}_{r\times r}(F)\cap\mathrm{GL}_r(F)
\end{equation*}
of symmetric nondegenerate $r\times r$ matrices with entries in $F.$ This is equipped with a right action of $K\defeq\mathrm{GL}_r(\O_F)$ via $x\cdot k= k^\intercal x k.$ For a given coefficient ring $R,$ we consider the space of $R$-valued \emph{spherical functions} $\mathcal{S}(X/K,R),$ that is, $R$-valued compactly supported and locally constant $K$-invariant functions $X\to R.$ This is a module for the ($R$-valued) Hecke algebra $\mathcal{H}(G,K,R)$ of $G\defeq\mathrm{GL}_r(F)$ with respect to $K.$

Hironaka studied the spherical functions $\mathcal{S}(X/K,\C)$ in the series of papers \cite{Hironaka1,Hironaka2,Hironaka3}. She conjectured that $\mathcal{S}(X/K,\C)$ is free of rank $4^r$ over $\mathcal{H}(G,K,\C),$ and proved such conjecture for $r\le 2.$ In this paper, we prove this conjecture in general.
\begin{specialtheorem}{Theorem A}\label{ThmA}
Let $R$ be a ring where $2$ is invertible. Then $\mathcal{S}(X/K,R)$ is a free $\mathcal{H}(G,K,R)$-module of rank $4^r.$
\end{specialtheorem}

Furthermore, in \Cref{explicitBasis} we provide an explicit generating set. We also treat at the same time the case of nondegenerate Hermitian matrices over unramified quadratic extensions, as well as the case of nondegenerate alternating matrices. These cases have long been understood by Hironaka \cite{HironakaHerm} and Hironaka--Sato \cite{HironakaAlt}, but our method yields a different proof. Moreover, our method allow us to study the spaces $\mathcal{S}(X/K,R)$ with any coefficient ring, not just with $R=\C.$

The space $X$ is an example of a \emph{spherical variety}. More generally, consider $G$ a connected reductive group over a local field and $\check{G}$ its Langlands dual group. For a spherical $G$-variety $X,$ the relative Langlands program predicts (roughly) that there exists a certain \emph{dual object} $\check{M}$ equipped with an action of $\check{G}$ such that there is a matching of data from $X$ and data from $\check{M}$ reminiscent of mirror symmetry and other physical dualities, as explored by Ben-Szi--Sakellaridis--Venkatesh \cite{BZSV}. Under certain assumptions considered by Sakellaridis \cite{Sakellaridis}--which include $G$ being split and $X$ having no roots of type $N$--this dual object $\check{M}$ should be controlled by a certain subgroup $\check{G}_X\subseteq\check{G}$ (which is conjecturally isogenous to the subgroup constructed by Gaitsgory--Nadler \cite{Gaitsgory-Nadler}). In this case, \cite[Theorem 1.4.1]{Sakellaridis} proves that we indeed have an identification
\begin{equation}\label{SphericalTransform}
    \mathcal{S}(X/K)\iso R(\check{G}_X)
\end{equation}
of $\mathcal{S}(X/K)$ with the representation ring $R(\check{G}_X)$ of $\check{G}_X$ in such a way that the action of $\mathcal{H}(G,K,\C)$ is given by the composition of the Satake transform $\mathcal{H}(G,K,\C)\iso R(\check{G})$ with the map $R(\check{G})\to R(\check{G}_X)$ induced by $\check{G}_X\subseteq\check{G}.$

The case considered in our \Cref{ThmA} is not covered by the above results, as $X$ has roots of type $N.$ Since \Cref{ThmA} is a consequence of an explicit description of $\mathcal{S}(X/K,R)$ as a $\mathcal{H}(G,K,R)$-module (\Cref{inducedThm}), it should be possible to obtain an analogue of \eqref{SphericalTransform}. One expects, from a conjecture of Jacquet \cite{Jacquet}, that the dual object $\check{M}$ is controlled by the dual group of a double cover $\widetilde{\mathrm{GL}}_r(F)$ of $\mathrm{GL}_r(F).$ However, it is unclear to the author how to make a precise conjecture.

We note that our explicit description of the $\mathcal{H}(G,K,R)$-module $\mathcal{S}(X/K,R)$ is a consequence of the general work of Sakellaridis in the settings where it applies, as we spell out in \Cref{Section-Appendix}. As mentioned above, this requires $X$ to not have roots of type $N$ and for $G$ to be split. In a companion paper \cite{CZ}, we build upon the present results to give similar explicit descriptions of spherical functions in certain symmetric varieties of ``Friedberg--Jacquet type'', including more new examples which contain roots of type $N.$

\subsection{Strategy and structure of the paper}
For ease of exposition, we outline the strategy in the Hermitian case. We refer the reader to the body of the paper for the corresponding details in the symmetric and alternating cases.

Let $F/F_0$ be a quadratic unramified extension with uniformizer $\varpi\in \O_{F_0},$ and we denote $X=\mathrm{Herm}_{r\times r}(F)\cap\mathrm{GL}_r(F)$ to be the space of nondegenerate $F/F_0$-Hermitian matrices. If $V=F^r$ an $r$-dimensional vector space over $F,$ then we naturally have
\begin{equation*}
    X=U(V_+)\backslash G\ \sqcup \ U(V_-)\backslash G,\quad\text{and thus}\quad X/K=U(V_+)\backslash G/K\ \sqcup\  U(V_-)\backslash G/K,
\end{equation*}
where $V_+=(V,\langle\cdot,\cdot\rangle_+)$ and $V_-=(V,\langle\cdot,\cdot\rangle_-)$ are the two non-isomorphic nondegenerate Hermitian forms on $V.$ Each term $U(V_\pm)\backslash G/K$ can be interpreted in a combinatorial fashion: $G/K$ is naturally the set of lattices $\Lambda$ in $V,$ and its $U(V_\pm)$-orbits are encoded by the relative position of $\Lambda$ and $\Lambda^{\vee_\pm}\defeq\{x\in V\colon\langle x,\Lambda\rangle_{\pm}\subseteq\O_F\}.$ Putting both terms together, this gives a bijection
\begin{equation*}
    X/K\rightiso\Typ^0_r\defeq\{(e_1,\ldots,e_r)\in\Z^r\colon e_1\ge\cdots\ge e_r\}.
\end{equation*}
This is a relative Cartan decomposition of $X,$ and under this lattice-theoretic interpretation, the action of the Hecke algebra on $\mathcal{S}(X/K,R)$ gets translated to certain lattice-counting questions.

In general, these lattice-counting questions can be very complicated, but we can fully compute them for the \emph{minuscule} Hecke operators. The main observation that drives this paper forward is that these can be succinctly described in terms of certain \emph{straightening relations} which we outline below. Since the minuscule Hecke operators generate $\mathcal{H}(G,K,R),$ this gives an explicit way to succinctly describe the action of the \emph{entire} Hecke algebra.

For $0\le k\le r,$ let $\mu_k\defeq(\underbrace{1,\ldots,1}_k,0,\ldots,0)$ and denote $T_k=\mathrm{char}(K\varpi^{\mu_k} K)\in\mathcal{H}(G,K,R)$ to be the minuscule Hecke operators. We will prove the following description for its adjoint $T_k^*\colon R[\Typ^0]\to R[\Typ^0]$. i) If $e\in\Typ^0$ is such that $e_i\ge e_{i+1}+2$ for all $1\le i<r,$ then
\begin{equation*}
    T_k^*(e)=\sum_{\varepsilon\in\{0,1\}^r}q^{\inv(\varepsilon)}\cdot (e_1+2\varepsilon_1,\ldots,e_r+2\varepsilon_r),
\end{equation*}
ii) In general, denote $\Typ=\Z^r$ and define $\Delta_k\colon R[\Typ]\to R[\Typ]$ by the same formula above; then we construct a certain submodule $\Rel\subseteq\Z[\Typ]$ of \emph{straightening relations} satisfying the following properties: 1) $R[\Typ]/\Rel=R[\Typ^0],$ 2) $\Delta_k$ preserves $\Rel$ and 3) the endomorphism of $R[\Typ]/\Rel$ induced by $\Delta_k$ is precisely $T_k^*.$ In this Hermitian case, the straightening relations are generated by the following elements for all $a,b\in\Z$ with $a<b$\footnote{We are assuming $2\in R^\times$ for simplicity here.}
\begin{equation*}
\begin{split}
    &\typee{a}{b}-\typee{a+1}{b-1}\\
    &\quad\quad-(-\#\O_{F_0}/\varpi\O_{F_0})^{b-a-1}\left(\typee{b}{a}-\typee{b-1}{a+1}\right).
\end{split}
\end{equation*}

Such a description will allow us to study the $\mathcal{H}(G,K,R)$-module structure of $\mathcal{S}(X/K,R).$ Parallel to \cref{ThmA}, we can use this description to see that, in this Hermitian case, $\mathcal{S}(X/K,R)$ is a free $\mathcal{H}(G,K,R)$-module with basis
\begin{equation*}
    \left\{(e_1,\ldots,e_r)\in\Typ^0\colon e_1-e_2,\ldots,e_{r-1}-e_r,e_r\in\{0,1\}\right\}.
\end{equation*}

In \Cref{FiniteFieldSection}, we collect certain subspace-counts over finite fields, which are used in \Cref{MainLemmaSection} where we solve the general lattice-counting question related to minuscule Hecke operators. In \Cref{StraighteningSection}, we use the above results to give the explicit description of the Hecke action in terms of straightening relations. In \Cref{HeckeStructureSection}, we analyze the Hecke structure of the module of spherical functions. In addition to proving Hironaka's conjecture on the symmetric case, we also show how we can recover the spherical transforms of Hironaka resp.\ Hironaka--Sato in the Hermitian resp.\ alternating cases from our methods.

\subsection{Notations}
Throughout the whole paper, we will consider three cases: \Ugen, \Ogen and \SPgen, standing for \emph{unramified Hermitian}, \emph{symmetric} and \emph{alternating}. The three cases will be completely independent of each other.

\subsubsection*{$p$-adic fields}
We will denote $F_0$ to be a $p$-adic local field for some prime $p.$\index{p@$p$} We will consider\index{F@$F_0,\ F$}
\begin{case}{\Ugen}
$F/F_0$ an unramified quadratic extension,
\end{case}
\begin{case}{\Ogen}
$F=F_0,$ and we assume $p$ is odd,
\end{case}
\begin{case}{\SPgen}
$F=F_0.$
\end{case}
We denote by $\O_F$ and $\O_{F_0}$ the corresponding rings of integers, with maximal ideals $\m_F$ and $\m_{F_0}.$ We let $q\defeq\#\O_F/\m_F$ and $q_0\defeq\#\O_{F_0}/\m_{F_0}.$\index{q@$q_0,\ q$} Fix $\varpi\in\m_{F_0}$\index{1pi@$\varpi$} an uniformizer.

We denote $\mathrm{Sign}$\index{Sign@$\mathrm{Sign}$} to be the set
\begin{case}{\Ugen}
$\mathrm{Sign}=\{1\},$
\end{case}
\begin{case}{\Ogen}$\mathrm{Sign}=\O_F^\times/(\O_F^\times)^2=\F_q^\times/(\F_q^\times)^2=\{\pm1\},$
\end{case}
\begin{case}{\SPgen}
$\mathrm{Sign}=\{1\},$
\end{case}
and $\mathrm{sign}\colon\F_q^\times\to\mathrm{Sign}$ and $\mathrm{sign}\colon\O_F^\times\to\mathrm{Sign}$\index{sign@$\mathrm{sign}$} to be the natural maps. We also denote $\epsilon\defeq\mathrm{sign}(-1).$\index{1epsilon@$\epsilon$}

For $A\in\mathrm{Mat}_{a\times b}(F),$ we denote $A^*\in\mathrm{Mat}_{b\times a}(F)$ to be\index{0()*@$(\cdot)^*$}
\begin{case}{\Ugen}
$A^*=\overline{(A^\intercal)},$ where $\overline{(\cdot)}\colon F\to F$ is the nontrivial element of $\mathrm{Gal}(F/F_0),$
\end{case}
\begin{case}{\Ogen}
$A^*=A^\intercal,$
\end{case}
\begin{case}{\SPgen}
$A^*=-A^\intercal.$
\end{case}

We also denote
\index{1gamma@$\cfactor$}
\begin{equation*}
\cfactor\defeq \begin{cases}
    1 & \text{in the cases \Ugen, \Ogen,} \\
    2 & \text{in the case \SPgen.}
\end{cases}
\end{equation*}

\subsubsection*{Combinatorics}
For $n\in\Z_{\ge0},$ the $q$-Pochammer symbol and falling factorials are
\index{0(x;y)n@$(x;y)_n,\ (x)_n$}
\begin{equation*}
    (x;y)_n\defeq\prod_{i=1}^n(1-xy^{i-1}),\quad\text{and}\quad (x)_n\defeq(-1)^n(x;x)_n=\prod_{i=1}^n(x^i-1).
\end{equation*}
The $q$-analogues of binomial coefficients are
\index{0(nm)lambda@$\qbinom{n}{m}{\lambda}$}
\begin{equation*}
    \qbinom{n}{m}{\lambda}=\frac{(n)_\lambda}{(m)_\lambda\cdot(n-m)_\lambda}\quad\text{for integers }0\le m\le n.
\end{equation*}
For $n,m\in\Z,$ we will use the convention that $\qbinom{n}{m}{\lambda}=0$ if either $m<0,$ $m>n$ or $n<0.$ More generally, if $n,m_1,\ldots,m_r$ are integers with $n=m_1+\cdots+m_r,$ we also consider the $\lambda$-multinomials given by
\begin{equation*}
    \qbinom{n}{m_1,\ldots,m_r}{\lambda}=\frac{(n)_\lambda}{(m_1)_\lambda\cdots(m_r)_\lambda}\quad\text{if }m_1,\ldots,m_r\ge0,
\end{equation*}
and given by $0$ otherwise.

For $I$ a finite ordered set and $f\colon I\to\Z$ a function, we denote
\index{inv@$\inv,\ \widetilde{\inv}$}
\begin{equation*}
    \inv(f)\defeq\#\{(i,j)\in I^2\colon i<j\text{ and }f(i)>f(j)\}\quad\text{and}\quad \widetilde{\inv}(f)\defeq\sum_{\substack{(i,j)\in I^2\\ i<j}}\max(0,f(i)-f(j)).
\end{equation*}
For a sequence of integers $e\in\Z^n,$ and an integer $m\in\Z,$ we denote
\index{1lambdame@$\lambda_m(e)$}\index{1Sigmae@$\Sigma(e)$}
\begin{equation*}
    \lambda_m(e)\defeq\#\{1\le i\le n\colon e_i=m\},\quad\text{and}\quad\Sigma(e)\defeq\sum_{i=1}^ne_i.
\end{equation*}
\subsection*{Acknowledgments}
The author would like to thank Andrew Graham, Siddharth Mahendraker and Zhiyu Zhang for their interest and helpful discussions. The author is grateful to Wei Zhang for his guidance and suggestions. This work was supported in part by the NSF grant DMS--1901642, and in part by the NSF grant DMS--1440140, while the author was in residence at the Simons Laufer Mathematical Sciences Institute in Berkeley, California, during the Spring Semester of 2023.

\section{Combinatorics over finite fields}\label{FiniteFieldSection}
Let $V$ be a finite dimensional $\F_q$-vector space. We consider nondegenerate pairings $\langle\cdot,\cdot\rangle\colon V\times V\to \F_q$ which are
\begin{case}{\Ugen}
$\F_q/\F_{q_0}$-Hermitian,
\end{case}
\begin{case}{\Ogen}
symmetric,
\end{case}
\begin{case}{\SPgen}
alternating.
\end{case}

The following is well-known.
\begin{proposition}\label{typdef}
    The set of isomorphism classes of nondegenerate $(V,\langle\cdot,\cdot\rangle)$ is in bijection with $\{(0,1)\}\sqcup(\Z_{>0}\times\mathrm{Sign})$ via $\mathrm{typ}(V,\langle\cdot,\cdot\rangle)\defeq(d,\chi)$ where $\chi=\mathrm{sign}(\det(V,\langle\cdot,\cdot\rangle))$\index{typ(V,<>)@$\mathrm{typ}(V,\langle\cdot,\cdot\rangle)$} and
    \begin{equation*}
        d\defeq\frac{1}{\cfactor}\dim_{\F_q}V=\begin{cases}
            \dim_{\F_q}V & \text{in the cases \Ugen, \Ogen,} \\
            \frac{1}{2}\dim_{\F_q}V & \text{in the case \SPgen.}
        \end{cases}
    \end{equation*}
\end{proposition}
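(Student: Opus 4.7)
The plan is to treat each of the three cases \Ugen, \Ogen, \SPgen separately, since these are the classical classifications of nondegenerate Hermitian, symmetric (in odd residue characteristic), and alternating forms over a finite field. In each case the zero-dimensional form accounts for the $(0,1)$ summand, and for positive-dimensional forms the task splits into showing (i) existence of a form with each prescribed invariant, and (ii) uniqueness up to isomorphism.

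For existence I would write down the standard models explicitly. In case \Ugen, take the Hermitian form $\sum_{i=1}^d x_i\overline{y_i}$ on $\F_q^d$; any $d\ge 0$ occurs and $\mathrm{Sign}=\{1\}$. In case \SPgen, take the standard symplectic form on $\F_q^{2d}$; since alternating forms can only be nondegenerate in even dimension, the invariant is $d=\tfrac12\dim V$ and again $\mathrm{Sign}=\{1\}$. In case \Ogen, take on $\F_q^d$ either $\sum x_iy_i$ (realizing $\chi=+1$) or the form $\sum_{i<d}x_iy_i+\alpha x_dy_d$ for $\alpha$ a nonsquare (realizing $\chi=-1$); note that since $\det(P^\intercal A P)=\det(P)^2\det(A)$, the class of $\det$ in $\F_q^\times/(\F_q^\times)^2$ is a genuine isomorphism invariant, so $\chi$ is well-defined.

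For uniqueness I would induct on $\dim V$. In cases \Ugen and \Ogen, the key step is to find $v\in V$ with $\langle v,v\rangle\ne 0$: in \Ogen this follows from the polarization $2\langle v,w\rangle=\langle v+w,v+w\rangle-\langle v,v\rangle-\langle w,w\rangle$ together with $p$ odd, and in \Ugen from the trace identity $\langle v+w,v+w\rangle=\langle v,v\rangle+\langle w,w\rangle+\mathrm{Tr}_{F/F_0}\langle v,w\rangle$ (values already lie in $\F_{q_0}$). One splits $V=\F_q v\oplus v^\perp$, inducts on $v^\perp$, and uses that in \Ugen the scalar $\langle v,v\rangle$ can be rescaled to $1$ via $v\mapsto cv$ since $\mathrm{Nm}\colon\F_q^\times\to\F_{q_0}^\times$ is surjective (so the Hermitian class is determined by dimension), while in \Ogen rescaling only alters $\det$ by a square, preserving $\chi$. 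In case \SPgen, pick any $v\ne 0$ and then $w$ with $\langle v,w\rangle=1$; the plane $H=\mathrm{span}(v,w)$ is a hyperbolic plane, $V=H\oplus H^\perp$, and induction on $H^\perp$ finishes.

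I do not expect a serious obstacle. The only point requiring mild care is the nondegeneracy of the invariant $\chi$ in case \Ogen, i.e.\ that the two choices of $\alpha$ (square vs.\ nonsquare) actually give nonisomorphic forms; but this is immediate from the determinant computation above. Everything else reduces to the familiar Gram–Schmidt/symplectic-basis diagonalization procedure over a field where $2$ is invertible (in \Ogen) or no such issue arises (in \Ugen and \SPgen).
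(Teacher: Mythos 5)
The paper offers no proof of this proposition at all: it is introduced with ``The following is well-known'' and left as a citation-free classical fact, so there is nothing to compare your argument against line by line. Your sketch is the standard classification proof and is essentially correct: explicit models give existence, the determinant-mod-squares computation shows $\chi$ is well defined in case \Ogen, and the induction via an anisotropic vector (cases \Ugen, \Ogen) or a hyperbolic plane (case \SPgen) gives uniqueness, with surjectivity of the norm handling the rescaling in the Hermitian case. The one step you gloss over is the end of the uniqueness argument in case \Ogen: after diagonalizing, you still must show that $\mathrm{diag}(a_1,\ldots,a_d)$ and $\mathrm{diag}(b_1,\ldots,b_d)$ are isomorphic whenever $\prod a_i$ and $\prod b_i$ lie in the same square class, which reduces to the lemma that $\mathrm{diag}(a,b)\cong\mathrm{diag}(1,ab)$, i.e.\ that the binary form $ax^2+by^2$ represents $1$ over $\F_q$; this follows from the usual counting argument (the sets $\{ax^2\}$ and $\{1-by^2\}$ each have $(q+1)/2$ elements, hence intersect), but it is not implied by ``find $v$ with $\langle v,v\rangle\neq0$ and induct'' alone, since the two forms being compared must be made to represent a common value. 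With that lemma inserted, the argument is complete. Note also that the paper deliberately uses $\det$ rather than the discriminant $(-1)^{\binom{\dim V}{2}}\det$ as the invariant (see the remark following the proposition), which your write-up handles correctly.
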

\begin{remark}
Note that in case \Ogen, we are using the determinant rather than the usual discriminant $\mathrm{disc}(V)\defeq(-1)^{\binom{\dim V}{2}}\det(V).$ This will make the results on the following subsection less clean, but will make the later sections much cleaner due to the multiplicativity of the determinant.
\end{remark}

We will collect formulas for the following two quantities:
\begin{definition}\label{typFinite}
For $(V,\langle\cdot,\cdot\rangle)$ as above with $\mathrm{typ}(V)=(r,\chi),$ we consider:
\begin{enumerate}
    \item Given $0\le b\le r$ we denote
    \index{S(b,r,chi)@$S\TypeC{b}{}{r}{\chi}$}
    \begin{equation*}
        S\TypeC{b}{}{r}{\chi}\defeq\#\{W\subseteq V\colon \dim W=b,\ W\subseteq W^\perp\}
    \end{equation*}
    to be the number of $b$-dimensional isotropic subspaces of $V.$
    \item Given $0\le a\le r$ and $\eta\in\mathrm{Sign},$ we denote
    \index{R(a,eta,r,chi)@$R\TypeC{a}{\eta}{r}{\chi}$}
    \begin{equation*}
        R\TypeC{a}{\eta}{r}{\chi}\defeq\#\{W\subseteq V\colon W\text{ is nondegenerate, }\mathrm{typ}(W)=(a,\eta)\}.
    \end{equation*}
\end{enumerate}
\end{definition}

\begin{lemma}\label{Hprop}
Given $(V,\langle\cdot,\cdot\rangle)$ with $\mathrm{typ}(V)=(r,\chi),$ let $H(r,\chi)\subseteq\mathrm{GL}(V)$ be the subgroup of automorphisms which preserve $\langle\cdot,\cdot\rangle.$ Then we have
\begin{equation*}
    \#H(r,\chi)=\begin{cases}\displaystyle(-q_0)^{\binom{r}{2}}(r)_{-q_0}&\text{in the case \Ugen,}\\\displaystyle 2q^{\lfloor r/2\rfloor\cdot\lfloor(r-1)/2\rfloor}\frac{(\lfloor r/2\rfloor)_{q^2}}{e(r,\chi)}&\text{in the case \Ogen,}\\\displaystyle q^{r^2}(r)_{q^2}&\text{in the case \SPgen.}\end{cases}
\end{equation*}
Here we denote
\index{e(r,chi)@$e(r,\chi)$}
\begin{equation*}
    e(r,\chi)\defeq\begin{cases}q^{r/2}+\epsilon^{r/2}\chi&\text{if }r\text{ is even,}\\1&\text{if }r\text{ is odd.}\end{cases}
\end{equation*}
\end{lemma}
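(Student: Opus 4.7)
These three formulas are the orders of the finite classical groups $U_r(\F_{q_0})$, $O(V)$ (with $\dim V = r$ and discriminant class $\chi$), and $\mathrm{Sp}(V)$ (with $\dim V = 2r$), rewritten in the paper's notation, so my plan is to prove them uniformly by induction on $r$. The inductive step will apply Witt's extension theorem to identify a single transitive $H(r,\chi)$-orbit $\Omega \subseteq V$, recognize the stabilizer of a point as $H(r-1,\chi')$ (possibly extended by a unipotent radical), count $|\Omega|$ directly, and invoke orbit-stabilizer to obtain a recursion that telescopes to the displayed expression.

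The symplectic and unitary cases are routine. In case \SPgen, every nonzero vector is isotropic and $\mathrm{Sp}(V)$ acts transitively on $\Omega = V \setminus \{0\}$, of size $q^{2r} - 1$; the stabilizer of $v$ is an extension of $\mathrm{Sp}(v^\perp/\langle v\rangle) \cong H(r-1)$ by a unipotent group of order $q^{2r-1}$, giving the recursion $|H(r)| = (q^{2r}-1)q^{2r-1}|H(r-1)|$, which telescopes to $q^{r^2}(r)_{q^2}$. In case \Ugen\ I take $\Omega$ to be the set of unit vectors $\{v : \langle v,v\rangle = 1\}$; by Witt this is a single orbit with stabilizer $U(v^\perp) \cong H(r-1,1)$, and a direct count in the standard Hermitian model $\sum x_i\bar x_i$ — partitioning by the $\F_{q_0}$-valued tuple $(x_i\bar x_i)_i$ and using $|N^{-1}(0)| = 1$, $|N^{-1}(c)| = q_0+1$ for $c \in \F_{q_0}^\times$ — gives $|\Omega| = q_0^{r-1}(q_0^r - (-1)^r)$. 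The resulting recursion telescopes to the stated expression $(-q_0)^{\binom{r}{2}}(r)_{-q_0}$.

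The orthogonal case \Ogen\ is the most delicate, since the discriminant of $v^\perp$ depends on the norm $\langle v,v\rangle$ and must be tracked through the induction. I would fix a vector $v$ with $\langle v,v\rangle = 1$ (so that the stabilizer is exactly $H(r-1,\chi)$) and count $|\Omega| = \#\{v \in V : \langle v,v\rangle = 1\}$ using the classical point-counts on non-degenerate quadrics over $\F_q$: in odd dimension $r = 2n+1$ this gives $q^{2n}$ such vectors, while in even dimension $r = 2n$ it gives $q^{2n-1} - \eta q^{n-1}$ for a sign $\eta \in \{\pm 1\}$ determined by $\chi$ and $\epsilon$. The main obstacle is the bookkeeping needed to show that these counts telescope to the closed form involving $e(r,\chi)$: one must carefully account for the sign $\epsilon = \mathrm{sign}(-1)$ and for the paper's convention of using $\det V$ rather than the classical discriminant $(-1)^{\binom{r}{2}}\det V$, both of which enter the definition of $e(r,\chi)$. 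Pinning down signs by the base cases $|H(1,\chi)| = 2$, $|H(2,\epsilon)| = 2(q-1)$, and $|H(2,-\epsilon)| = 2(q+1)$ is then enough to close the induction via routine $q$-arithmetic.
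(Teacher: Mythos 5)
Since the paper's own ``proof'' of this lemma is a one-line citation to the literature on orders of finite classical groups, your self-contained orbit--stabilizer induction is necessarily a different route, and it is the standard one. The symplectic and unitary steps are essentially correct: the recursions $\#H(r)=(q^{2r}-1)q^{2r-1}\#H(r-1)$ and $\#H(r)=q_0^{r-1}(q_0^r-(-1)^r)\#H(r-1)$ are right, and your count of norm-one Hermitian vectors checks out. One caveat in case \Ugen: your recursion telescopes to the (positive) quantity $q_0^{\binom{r}{2}}\prod_{i=1}^r(q_0^i-(-1)^i)$, which equals the displayed $(-q_0)^{\binom{r}{2}}(r)_{-q_0}$ only up to a factor of $(-1)^r$, since $(-q_0)^i-1=(-1)^i(q_0^i-(-1)^i)$; as printed the formula is negative for odd $r$ (e.g.\ it gives $-(q_0+1)$ at $r=1$). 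This sign is harmless downstream because $\#H$ only ever enters \Cref{Rprop} and \Cref{mainLemmaFinite} through ratios $\#H(r,\chi)/(\#H(a,\eta)\#H(r-a,\eta\chi))$ where it cancels, but your claim that the recursion ``telescopes to the stated expression'' is not literally true, and you should say which normalization you are proving.

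The genuine gap is the odd-dimensional orthogonal point count. The number of $v$ with $\langle v,v\rangle=1$ in a nondegenerate quadratic space of dimension $2n+1$ over $\F_q$ is not $q^{2n}$; it is $q^{2n}+\eta\, q^n=q^n(q^n+\eta)$, where $\eta=\epsilon^n\chi$ is the Witt type of the even-dimensional complement $v^\perp$ (equivalently, the quadratic character of $(-1)^n\det V$). Already at $r=1$ the count is $1+\mathrm{sign}(\det V)\in\{0,2\}$, and for the split form $x_1x_2+x_3^2$ over $\F_3$ one finds $12=q^2+q$ solutions rather than $q^2=9$. The error is not cosmetic: with the count $q^{2n}$ the recursion yields $q^{2n}\,\#O_{2n}^{\eta}$, which differs from the true $\#O_{2n+1}=q^n(q^n+\eta)\,\#O_{2n}^{\eta}$ by the factor $1+\eta q^{-n}$, so the induction does not close. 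Once you substitute the correct quadric count, the telescoping does work: $q^n(q^n+\eta)\cdot 2q^{n(n-1)}(\lfloor 2n/2\rfloor)_{q^2}/e(2n,\chi)=2q^{n^2}(n)_{q^2}$ exactly because $e(2n,\chi)=q^n+\epsilon^n\chi$, and the even-to-odd step similarly uses $N(1)=q^{2n-1}-\epsilon^n\chi\, q^{n-1}$, which is the count you correctly recorded. So the architecture of your argument is fine, but the odd-dimensional sphere count must be corrected before the \Ogen case is a proof.
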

\begin{proof}
This is classical, see for example \cite{ClassicalGroups}.
\end{proof}

\begin{proposition}\label{Rprop}
We have
\begin{equation*}
    R\TypeC{a}{\eta}{r}{\chi}=\begin{cases}\displaystyle(-q_0)^{a(r-a)}\qbinom{r}{a}{-q_0}&\text{in the case \Ugen,}\\\displaystyle\frac{q^{\lfloor a(r-a)/2\rfloor}}{2}\frac{\lfloor r/2\rfloor_{q^2}}{\lfloor a/2\rfloor_{q^2}\lfloor(r-a)/2\rfloor_{q^2}}\frac{e(a,\eta)e(r-a,\chi\eta)}{e(r,\chi)}&\text{in the case \Ogen,}\\\displaystyle q^{2a(r-a)}\qbinom{r}{a}{q^2}&\text{in the case \SPgen.}\end{cases}
\end{equation*}
\end{proposition}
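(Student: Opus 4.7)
The natural approach is orbit-stabilizer combined with Witt's extension theorem. The isometry group $H(r,\chi)$ acts on the set of nondegenerate subspaces of type $(a,\eta)$, and Witt's theorem guarantees that this action is transitive (any isometry between two such subspaces extends to a global isometry of $V$). Hence
\begin{equation*}
    R\TypeC{a}{\eta}{r}{\chi}=\frac{\#H(r,\chi)}{\#\mathrm{Stab}_{H(r,\chi)}(W)}
\end{equation*}
for any fixed nondegenerate $W\subseteq V$ of type $(a,\eta)$. Since the form is nondegenerate, $V=W\oplus W^\perp$, and an isometry of $V$ preserving $W$ is exactly the data of an isometry of $W$ together with an isometry of $W^\perp$, so $\mathrm{Stab}_{H(r,\chi)}(W)\iso H(W)\times H(W^\perp)$.

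The key compatibility is that $\mathrm{typ}(W^\perp)=(r-a,\chi\eta)$: the dimension is clear, and by multiplicativity of the determinant under orthogonal direct sum (together with the definition of $\mathrm{sign}$, and the fact that in cases \Ugen and \SPgen the sign is trivial), the sign of $W^\perp$ is $\chi\cdot\eta^{-1}=\chi\eta$. Therefore
\begin{equation*}
    R\TypeC{a}{\eta}{r}{\chi}=\frac{\#H(r,\chi)}{\#H(a,\eta)\cdot\#H(r-a,\chi\eta)},
\end{equation*}
and the plan is to plug in the formulas from \Cref{Hprop} and simplify to the claimed expression in each of the three cases.

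The cases \Ugen and \SPgen should be a direct computation, as the formulas in \Cref{Hprop} are multiplicative in the obvious way and the ratios collapse to the $q$-binomial coefficients $\qbinom{r}{a}{-q_0}$ and $\qbinom{r}{a}{q^2}$ (with the expected $q$-powers coming from the $\binom{r}{2}$- and $r^2$-type exponents, which satisfy the corresponding additivity identities). The main obstacle, and where care is required, is case \Ogen: there the formula has the extra factor $e(r,\chi)$ depending on the parity of $r$ and the sign, and one needs to verify both the arithmetic identity relating $\lfloor r/2\rfloor\cdot\lfloor(r-1)/2\rfloor$ to $\lfloor a/2\rfloor\cdot\lfloor(a-1)/2\rfloor+\lfloor(r-a)/2\rfloor\cdot\lfloor(r-a-1)/2\rfloor+\lfloor a(r-a)/2\rfloor$, as well as to correctly track the $e(a,\eta)e(r-a,\chi\eta)/e(r,\chi)$ factor (which arises naturally when dividing the orders of the orthogonal groups, since the factor of $2$ and the $q$-Pochhammer on $\lfloor\cdot/2\rfloor$ combine cleanly). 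The parity analysis splits naturally according to whether $a$ and $r-a$ are each even or odd, and in each sub-case the identity reduces to an elementary check.
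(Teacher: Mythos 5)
Your proposal is correct and is essentially the paper's own proof: the paper likewise deduces the formula from \Cref{Hprop} via the identity $R\TypeC{a}{\eta}{r}{\chi}=\#H(r,\chi)/(\#H(a,\eta)\cdot\#H(r-a,\eta\chi))$, which is exactly your orbit--stabilizer/Witt argument (the paper leaves the transitivity and stabilizer identification implicit). Your identification of the exponent identity $\lfloor r/2\rfloor\lfloor(r-1)/2\rfloor-\lfloor a/2\rfloor\lfloor(a-1)/2\rfloor-\lfloor(r-a)/2\rfloor\lfloor(r-a-1)/2\rfloor=\lfloor a(r-a)/2\rfloor$ as the only nontrivial check in case \Ogen is also accurate.
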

\begin{proof}
This follows at once from \Cref{Hprop} by noting that
\begin{equation*}
R\TypeC{a}{\eta}{r}{\chi}=\frac{\#H(r,\chi)}{\#H(a,\eta)\cdot\#H(r-a,\eta\chi)}.\qedhere
\end{equation*}
\end{proof}

\begin{proposition}\label{Sprop}
We have
\begin{equation*}
    S\TypeC{b}{}{r}{\chi}=\begin{cases}\displaystyle(-q_0;q_0^2)_b\qbinom{r}{2b}{-q_0}&\text{in the case \Ugen,}\\\displaystyle(-q;q)_b\qbinom{\lfloor r/2\rfloor}{b}{q^2}\frac{e(r-2b,\chi\epsilon^b)}{e(r,\chi)}&\text{in the case \Ogen,}\\\displaystyle(-q;q)_b\qbinom{r}{b}{q^2}&\text{in the case \SPgen.}\end{cases}
\end{equation*}
\end{proposition}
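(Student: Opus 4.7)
The plan is to prove the formula by induction on $b$ via a count-in-two-ways recursion.

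Consider the set of pairs $(L, W)$ where $L \subseteq V$ is a $1$-dimensional isotropic subspace and $W \supseteq L$ is a $b$-dimensional isotropic subspace. Counting by $W$ first: each isotropic $W$ of dimension $b$ satisfies $W \subseteq W^\perp$, so every line in $W$ is automatically isotropic, and hence $W$ contributes $(q^b - 1)/(q - 1)$ pairs. Counting by $L$ first: for each isotropic line $L$, the quotient $L^\perp/L$ inherits a nondegenerate pairing of the same flavor, of some type $(r', \chi')$, and the assignment $W \mapsto W/L$ bijects $b$-dimensional isotropic $W \supseteq L$ in $V$ with $(b-1)$-dimensional isotropic subspaces of $L^\perp/L$. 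Equating the two counts yields
\begin{equation*}
S\TypeC{b}{}{r}{\chi} \cdot \frac{q^b - 1}{q - 1} = S\TypeC{1}{}{r}{\chi} \cdot S\TypeC{b-1}{}{r'}{\chi'},
\end{equation*}
with $r' = r - 2$ in cases \Ugen, \Ogen\ and $r' = r - 1$ in \SPgen; and $\chi' = 1$ in \Ugen\ and \SPgen, while $\chi' = \chi\epsilon$ in \Ogen\ (a hyperbolic plane has determinant $-1$, so $\det V = -\det(L^\perp/L)$).

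The base case $b = 1$ is the number of isotropic lines, namely the count of nonzero isotropic vectors divided by $q-1$. In \SPgen\ every vector is isotropic, giving $(q^{2r}-1)/(q-1)$; in \Ugen\ and \Ogen\ it is the classical formula for the number of zeros of a nondegenerate Hermitian, resp.\ quadratic form of type $(r, \chi)$. A direct check confirms that the proposed formulas agree with these values at $b = 1$.

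The inductive step then proceeds by substituting the inductive hypothesis for $S\TypeC{b-1}{}{r'}{\chi'}$ into the recursion and simplifying. The main obstacle is the algebraic bookkeeping in case \Ogen, where the ratio $e(r - 2b, \chi\epsilon^b)/e(r, \chi)$ must emerge as the telescoping product $\prod_{j=0}^{b-1} e(r - 2(j+1), \chi\epsilon^{j+1})/e(r - 2j, \chi\epsilon^j)$ of the factors introduced at each recursion step, with the $q$-binomial $\qbinom{\lfloor r/2\rfloor}{b}_{q^2}$ and the Pochhammer $(-q;q)_b$ building up in parallel from the $b=1$ factors; in \Ugen\ and \SPgen\ there is no $e$-factor and the simplification is routine.
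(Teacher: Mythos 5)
Your proof is correct, but it takes a genuinely different route from the paper. The paper handles \Ugen and \Ogen by citation (to Li--Zhang and Li--Zhang--Omeara respectively) and proves only \SPgen, via orbit--stabilizer: the isometry group acts transitively on $b$-dimensional isotropic subspaces, the stabilizer is a parabolic $P_b$ whose order is read off from its decomposition as a variety, and the count is $\#H/\#P_b$ using \Cref{Hprop}. You instead double-count flags $L\subseteq W$ (isotropic line inside isotropic $b$-space) to get the recursion $S\TypeC{b}{}{r}{\chi}\cdot\frac{q^b-1}{q-1}=S\TypeC{1}{}{r}{\chi}\cdot S\TypeC{b-1}{}{r'}{\chi'}$ and induct on $b$. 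Your reduction is sound: the bijection $W\mapsto W/L$ with isotropic subspaces of $L^\perp/L$ is standard, and the type $(r',\chi')$ of $L^\perp/L$ is independent of $L$ because $V\iso H\perp(L^\perp/L)$ with $H$ hyperbolic of determinant $-1$ (this is the one point you should state explicitly, since the product formula $S\TypeC{1}{}{r}{\chi}\cdot S\TypeC{b-1}{}{r'}{\chi'}$ silently uses it). Your approach buys independence from \Cref{Hprop} and from the structure theory of parabolics, giving a uniform self-contained treatment of all three cases; the cost is that the base case $b=1$ requires the classical count of isotropic vectors of Hermitian/quadratic forms (itself a nontrivial classical input, though elementary), and the inductive algebra in case \Ogen is heavier because the $e$-factors must telescope correctly --- I checked the ratios in all three cases and they do work out, e.g.\ in \SPgen one needs $(1+q^b)\frac{q^{2r}-1}{q^{2b}-1}=\frac{q^{2r}-1}{q^b-1}$, which is immediate. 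The paper's route is shorter once the group orders are granted.
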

\begin{proof}
For the case \Ugen, this is \cite[Lemma 1.9.1]{Li-Zhang}. For the case \Ogen, this is \cite[Lemma 3.2.2]{Li-ZhangO}. The case \SPgen is \cite[Exercise 8.1(ii)]{ClassicalGroups} and we include a proof for completeness. We will follow the proof in \cite[Lemma 1.9.1]{Li-Zhang}.

The symplectic group $\mathrm{Sp}_{2r}(\F_q)$ acts transitively in the set of $b$-dimensional isotropic subspaces of $V.$ The stabilizer is a parabolic subgroup $P_b(\F_q).$ As an affine variety, we have
\begin{equation*}
    P_b\iso \mathrm{GL}_b\times\mathrm{Sp}_{2(r-b)}\times\mathbb{G}_a^{(b^2+b)/2}\times\mathbb{G}_a^{2b(r-b)},
\end{equation*}
and thus, using \Cref{Hprop},
\begin{equation*}
    S\typeC{b}{r}=\frac{q^{r^2}(r)_{q^2}}{q^{\binom{b}{2}}(b)_q q^{(r-b)^2}(r-b)_{q^2}q^{2b(r-b)}q^{(b^2+b)/2}}=(-q;q)_b\qbinom{r}{b}{q^2}.\qedhere
\end{equation*}
\end{proof}

\begin{theorem}\label{mainLemmaFinite}
Let $(V,\langle\cdot,\cdot\rangle)$ be a space with $\mathrm{typ}(V)=(r,\chi).$ Let $0\le m,n\le r$ and $\psi_1\in\mathrm{Sign},$ and denote
\begin{equation*}
    l\defeq r-n-\frac{2}{\cfactor}\cdot m,\quad\psi_2\defeq\chi\psi_1\epsilon^m.
\end{equation*}
Then the number of subspaces $N\subseteq V$ with $\dim(N\cap N^\perp)=m$ and $\mathrm{typ}(N/(N\cap N^\perp))=(n,\psi_1)$ is
\index{Q(n,psi1,m,l,psi2)@$Q\Typec{n}{\psi_1}{m}{}{l}{\psi_2}$}
\begin{equation*}
Q\Typec{n}{\psi_1}{m}{}{l}{\psi_2}\defeq\begin{cases}\displaystyle(-1)^m(-q_0)^{nl}\frac{(r)_{-q_0}}{(n)_{-q_0}(m)_{q_0^2}(l)_{-q_0}}&\text{in the case \Ugen,}\\
\displaystyle \frac{q^{\lfloor nl/2\rfloor}}{2}\frac{\lfloor r/2\rfloor_{q^2}}{\lfloor n/2\rfloor_{q^2}(m)_q\lfloor l/2\rfloor_{q^2}}\frac{e(n,\psi_1)e(l,\psi_2)}{e(r,\chi)}&\text{in the case \Ogen,}\\
\displaystyle q^{2nl}\frac{(r)_{q^2}}{(n)_{q^2}(m)_{q}(l)_{q^2}}&\text{in the case \SPgen.}\end{cases}
\end{equation*}
\end{theorem}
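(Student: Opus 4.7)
The plan is to parameterize each such $N$ by a pair $(W, \overline{N})$ where $W \defeq N \cap N^\perp$ and $\overline{N}$ is the image of $N$ in $W^\perp/W$. Given $N$ with the stated invariants, $W$ is an isotropic subspace of dimension $m$ satisfying $W \subseteq N \subseteq W^\perp$, and the form on $V$ descends to a nondegenerate form on $W^\perp/W$; the image $\overline{N} \subseteq W^\perp/W$ is then itself nondegenerate of type $(n,\psi_1)$. Conversely, for any isotropic $W \subseteq V$ of dimension $m$ and any nondegenerate $\overline{N} \subseteq W^\perp/W$ of type $(n,\psi_1)$, the preimage $N$ of $\overline{N}$ under $W^\perp \twoheadrightarrow W^\perp/W$ satisfies $N \cap N^\perp = W$—indeed, nondegeneracy of $\overline{N}$ forces any $x \in N \cap N^\perp$ to map to zero in $W^\perp/W$, hence to lie in $W$. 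The two constructions are inverse to one another.

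Next I would determine $\mathrm{typ}(W^\perp/W)$. Its rank equals $n+l = r - (2/\cfactor)m$ by a dimension count. In case \Ogen, fix a hyperbolic complement: take $W'$ to be any complement of $W^\perp$ in $V$ and adjust so that $W'$ is also isotropic, so that $W \oplus W'$ is a sum of $m$ hyperbolic planes (Gram determinant $(-1)^m$ modulo squares) and $V = (W \oplus W') \perp V_0$ with $V_0 \iso W^\perp/W$. Multiplicativity of determinants then yields $\mathrm{sign}(\det(W^\perp/W)) = \chi\epsilon^m$, and the analogous decomposition $W^\perp/W = \overline{N} \perp \overline{N}^\perp$ gives $\psi_2 = \chi\psi_1\epsilon^m$. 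In cases \Ugen and \SPgen the sign datum is trivial, so no such computation is needed.

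Consequently, the count factors as
\[
Q\Typec{n}{\psi_1}{m}{}{l}{\psi_2} = S\TypeC{m}{}{r}{\chi} \cdot R\TypeC{n}{\psi_1}{n+l}{\chi\epsilon^m},
\]
and the proof reduces to substituting the formulas of \Cref{Sprop} and \Cref{Rprop} and simplifying case by case. The factor $e(n+l,\chi\epsilon^m)$ appearing in the numerator of $S$ cancels against the denominator of $R$, after which the remaining $q$-Pochhammer symbols match up via the elementary identities $(m)_{q^2} = (m)_q \cdot (-q;q)_m$ (for cases \Ogen and \SPgen) and $(2m)_{-q_0} = (-1)^m (m)_{q_0^2} (-q_0;q_0^2)_m$ (for case \Ugen), together with $(r-2m)_{-q_0} = (n+l)_{-q_0}$ and its analogues. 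The main obstacle is simply the bookkeeping needed to track the $\epsilon^m$ sign and the floor-function indices in case \Ogen; the underlying argument is not conceptually deep.
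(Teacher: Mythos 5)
Your proposal is correct and follows essentially the same route as the paper: fix $M=N\cap N^\perp$ first, observe that $\mathrm{typ}(M^\perp/M)=(n+l,\epsilon^m\chi)$, so the count factors as $S\TypeC{m}{}{r}{\chi}\cdot R\TypeC{n}{\psi_1}{r-\frac{2}{\cfactor}m}{\epsilon^m\chi}$, and then substitute \Cref{Sprop} and \Cref{Rprop}. The only difference is that you spell out the bijection $N\leftrightarrow(W,\overline{N})$ and the $q$-Pochhammer identities in more detail than the paper does; the substance is identical.
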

\begin{proof}
For such an $N,$ we denote $M\defeq N\cap N^\perp.$ Note that if $\dim(M)=m,$ then we have $\mathrm{sign}(\det(M^\perp/M))=\epsilon^m\chi.$ Counting $M$ first and then $N,$ we have that the number we want is
\begin{equation*}
    S\TypeC{m}{}{r}{\chi}\cdot R\TypeC{n}{\psi_1}{r-\frac{2}{\cfactor}\cdot m}{\epsilon^m\chi}.
\end{equation*}

In the case \Ugen, this is
\begin{equation*}
\begin{split}
    (-q_0;q_0^2)_m\qbinom{r}{2m}{-q_0}\cdot(-q_0)^{nl}\qbinom{r-2m}{n}{-q_0}=\ &(-q_0)^{nl}(-q_0;q_0^2)_m\qbinom{r}{n,2m,l}{-q_0}\\
    =\ &(-1)^m(-q_0)^{nl}\frac{(r)_{-q_0}}{(n)_{-q_0}(m)_{q^2}(l)_{-q_0}}.
\end{split}
\end{equation*}

In the case \Ogen, this is
\begin{equation*}
\begin{split}
    &(-q;q)_m\qbinom{\lfloor r/2\rfloor}{m}{q^2}\frac{e(r-2m,\chi\epsilon^m)}{e(r,\chi)}\cdot\frac{q^{\lfloor nl/2\rfloor}}{2}\frac{\lfloor r/2-m\rfloor_{q^2}}{\lfloor n/2\rfloor_{q^2}\lfloor l/2\rfloor_{q^2}}\frac{e(n,\psi_1)e(l,\chi\psi_1\epsilon^m)}{e(r-2m,\epsilon^m\chi)}\\
    &\quad\quad=\frac{q^{\lfloor nl/2\rfloor}}{2}\frac{\lfloor r/2\rfloor_{q^2}}{\lfloor n/2\rfloor_{q^2}(m)_q\lfloor l/2\rfloor_{q^2}}\frac{e(n,\psi_1)e(l,\psi_2)}{e(r,\chi)}.
\end{split}
\end{equation*}

In the case \SPgen, this is
\begin{equation*}
    (-q;q)_m\qbinom{r}{m}{q^2}\cdot q^{2nl}\qbinom{r-m}{n}{q^2}=q^{2nl}(-q;q)_m\qbinom{r}{n,m,l}{q^2}=q^{2nl}\frac{(r)_{q^2}}{(n)_{q^2}(m)_{q}(l)_{q^2}}.\qedhere
\end{equation*}
\end{proof}

\section{Lattice counting}\label{MainLemmaSection}
We consider a finite dimensional $F$-vector space $V,$ equipped with a form $\langle\cdot,\cdot\rangle\colon V\times V\to F$ which is
\begin{case}{\Ugen}
Hermitian over $F/F_0,$
\end{case}
\begin{case}{\Ogen}
symmetric,
\end{case}
\begin{case}{\SPgen}
alternating.
\end{case}

Let $H\subseteq\mathrm{GL}(V,F)$ be the $p$-adic group of automorphisms that preserve $\langle\cdot,\cdot\rangle,$ and $\Lat(V)$\index{Lat(V)@$\Lat(V)$} be the set of $\O_F$-lattices of $V.$ Then $H$ acts on $\Lat(V)$ by $h\cdot\Lambda\defeq h(\Lambda).$ The orbits of this map are well understood, as we describe below.

\begin{definition}\label{Typzerodef}
We denote $\Typ^0\defeq\{(e^0,\chi^0)\}$\index{Typ0@$\Typ^0$} to be the set of pairs $(e^0,\chi^0)$ where $e^0\colon\Z\to\Z_{\ge0}$ and $\chi^0\colon\Z\to\mathrm{Sign}$ are such that
\begin{itemize}
    \item we have $e^0(i)=0$ for all but finitely many $i\in\Z,$
    \item if $i$ is such that $e^0(i)=0,$ then $\chi^0(i)=1.$
\end{itemize}
\end{definition}

For an $\O_F$-lattice $\Lambda,$ there always exist an orthogonal decomposition $V=\bigoplus_{i\in\Z}V^{(i)}_\Lambda$ such that $\Lambda^{(i)}\defeq\Lambda\cap V^{(i)}_\Lambda$ satisfies that $(\Lambda^{(i)})^\vee=\varpi^{-i}\Lambda^{(i)}.$ We equip $V^{(i)}_\Lambda$ with the form $\langle\cdot,\cdot\rangle^{(i)}\defeq\varpi^{-i}\langle\cdot,\cdot\rangle,$ for which $\Lambda^{(i)}$ becomes self-dual. Given such a decomposition, we consider the functions $e^0\colon\Z\to\Z_{\ge0}$ and $\chi^0\colon\Z\to\mathrm{Sign}$ given by $(e^0(i),\chi^0(i))\defeq\mathrm{typ}(\Lambda^{(i)}/\varpi\Lambda^{(i)},\langle\cdot,\cdot\rangle^{(i)})$ where $\mathrm{typ}$ was defined in \Cref{typdef}. That is,
\begin{equation*}
    e^0(i)\defeq\frac{1}{\cfactor}\dim_{F}V^{(i)}_\Lambda,\quad\chi^0(i)\defeq\mathrm{sign}(\det(\Lambda^{(i)},\langle\cdot,\cdot\rangle^{(i)})).
\end{equation*}
We will denote $\mathrm{typ}(\Lambda)\defeq(e^0,\chi^0).$\index{typ(Lambda)@$\mathrm{typ}(\Lambda)$}
\begin{proposition}\label{latticeClassification}
For an $\O_F$-lattice $\Lambda,$ $\mathrm{typ}(\Lambda)$ is independent of the choice of decomposition above, and gives us an injection
\begin{equation*}
    H\backslash \Lat(V)\xhookrightarrow{\mathrm{typ}}\Typ^0.
\end{equation*}
Moreover, as $(V,\langle\cdot,\cdot\rangle)$ varies, these maps cover the whole set $\Typ^0.$
\end{proposition}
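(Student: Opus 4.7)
The plan is to prove the proposition in four steps: existence of a Jordan-type decomposition, well-definedness of the invariant $\mathrm{typ}(\Lambda)$, injectivity on $H$-orbits, and surjectivity as $(V,\langle\cdot,\cdot\rangle)$ varies. First I would establish existence by induction on $\dim V$: pick a vector $v\in\Lambda$ for which the pairing ideal $\langle v,\Lambda\rangle=\varpi^{i_0}\O_F$ is as small as possible. In cases \Ugen{} and \Ogen{} (using $p>2$ in the symmetric case to correct $\langle v,v\rangle$ by an elementary modification if needed), $v$ spans a modular rank-one orthogonal summand; in case \SPgen{}, pair $v$ with a companion $w$ so that $\langle v,w\rangle$ generates $\varpi^{i_0}\O_F$ and split off a rank-two modular summand. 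Iterate on the orthogonal complement.

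Next, well-definedness. For any such decomposition, $\Lambda=\bigoplus\Lambda^{(i)}$ and $\Lambda^\vee=\bigoplus\varpi^{-i}\Lambda^{(i)}$, so the elementary divisors of $\Lambda^\vee$ with respect to $\Lambda$ form the multiset in which each $\varpi^{-i}$ appears $\cfactor\cdot e^0(i)$ times. Since these divisors are intrinsic to the pair $(\Lambda,\Lambda^\vee)$, the function $e^0$ is independent of the decomposition. The sign $\chi^0(i)$ is only nontrivial in case \Ogen{}, and I would reduce its well-definedness to the classical uniqueness theorem for Jordan splittings of quadratic $\O_F$-lattices in odd residue characteristic: any two such splittings have componentwise isometric modular pieces, so their signed determinants in the residue field agree.

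For injectivity, suppose $\mathrm{typ}(\Lambda)=\mathrm{typ}(\Lambda')$ and pick a decomposition of each. For every $i$ the residue-field pair $(\Lambda^{(i)}/\varpi\Lambda^{(i)},\langle\cdot,\cdot\rangle^{(i)})$ has invariant $(e^0(i),\chi^0(i))$, so by \Cref{typdef} these quotients are isometric to the corresponding ones for $\Lambda'$. Hensel-lifting each isometry produces an isomorphism $(\Lambda^{(i)},\langle\cdot,\cdot\rangle^{(i)})\rightiso({\Lambda'}^{(i)},\langle\cdot,\cdot\rangle^{(i)})$, which also preserves the original form since the two differ only by the scalar $\varpi^i$; assembling yields an element of $H$ sending $\Lambda$ to $\Lambda'$. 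For surjectivity, given $(e^0,\chi^0)\in\Typ^0$, for each $i$ with $e^0(i)>0$ use \Cref{typdef} to construct a nondegenerate residue-field form of type $(e^0(i),\chi^0(i))$ on a space of dimension $\cfactor\cdot e^0(i)$, lift to a self-dual $\O_F$-lattice $\Lambda^{(i)}$, rescale the form by $\varpi^i$, and take the orthogonal sum.

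The main obstacle I anticipate is the well-definedness of $\chi^0(i)$ in case \Ogen{}: the $e^0(i)$ are immediate from the Smith normal form of the Gram matrix, but the sign invariants require the classical Jordan-uniqueness theorem for quadratic lattices, whose proof involves Witt-type cancellation over the residue field combined with careful Hensel-lifting. The other two cases are substantially easier because in case \Ugen{} the modular components are determined by rank alone, and in case \SPgen{} all symplectic modular lattices of a given rank are isometric.
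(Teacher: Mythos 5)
Your proposal is correct, but it takes a genuinely different route from the paper: the paper's entire proof of this proposition is a citation --- Jacobowitz in case \Ugen, O'Meara in case \Ogen, and the classical theory of symplectic lattices in case \SPgen --- whereas you unpack the statement into an actual argument. Your four steps (existence of a Jordan-type splitting by induction on the dimension, the intrinsic characterization of $e^0$ via the elementary divisors of $\Lambda^\vee$ relative to $\Lambda$, injectivity by Hensel-lifting isometries of the residue forms, and surjectivity by direct construction) are all sound, and you correctly isolate the one place where a nontrivial classical theorem is unavoidable: the well-definedness of the signs $\chi^0(i)$ in case \Ogen, which is precisely the uniqueness theorem for Jordan splittings of quadratic $\O_F$-lattices in odd residue characteristic, i.e.\ the content of the O'Meara reference that the paper invokes wholesale. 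What your version buys is an essentially self-contained treatment of cases \Ugen and \SPgen together with a precise localization of the classical input needed for case \Ogen; what the paper's version buys is brevity. If you write this up, two small points deserve a sentence each: in case \Ugen the rank-one splitting step relies on the surjectivity of the trace $\O_F\to\O_{F_0}$ (valid because $F/F_0$ is unramified), and the Hensel-lifting step in your injectivity argument rests on the fact that a self-dual lattice is determined up to isometry by its residue form --- true in all three cases considered here, but worth stating explicitly since it is exactly what fails in settings (e.g.\ ramified Hermitian or dyadic quadratic) excluded by the paper's hypotheses.
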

\begin{proof}
For the case \Ugen, this follows from the work of Jacowitz \cite{Jacobowitz}. For the case \Ogen, this follows from the work of O'Meara \cite{Omeara}. For the case \SPgen, this is classical, see for example \cite[Section 2]{HironakaAlt}.
\end{proof}

\begin{theorem}\label{mainLemma}
Fix $\Lambda\in\Lat(V)$ and denote $\mathrm{typ}(\Lambda)=(e^0,\chi^0).$ Fix an orthogonal decomposition $V=\bigoplus_{i\in\Z}V^{(i)}_\Lambda$ as above, and denote $V^{(\ge j)}_\Lambda\defeq\bigoplus_{i\ge j}V^{(j)}_\Lambda.$

Consider another lattice $L\in\Lat(V)$ with $\varpi\Lambda\subseteq L\subseteq\Lambda,$  and denote $N_i\defeq\mathrm{proj}_{V^{(i)}_\Lambda}(L\cap V^{(\ge i)}_\Lambda).$ Then $\mathrm{typ}(L)$ is determined by $(N_i)_{i\in\Z}.$ More precisely: let $M_i\defeq N_i\cap \varpi^{i+1}N_i^\vee,$ and denote $n,m,l\colon\Z\to\Z_{\ge0}$ and $\psi_1,\psi_2\colon\Z\to\mathrm{Sign}$ to be
\begin{enumerate}
    \item $m(i)=\dim(M_i/\varpi\Lambda^{(i)}),$
    \item $(n(i),\psi_1(i))=\mathrm{typ}(N_i/M_i,\langle\cdot,\cdot\rangle^{(i)}),$
    \item $(l(i),\psi_2(i))=\mathrm{typ}(\varpi^{i+1}N_i^\vee/M_i,\langle\cdot,\cdot\rangle^{(i)}).$
\end{enumerate}
Note that these satisfy
\begin{equation*}
    e^0(i)=n(i)+\frac{2}{\cfactor}\cdot m(i)+l(i)\quad\text{and}\quad\chi^0(i)=\psi_1(i)\cdot\epsilon^{m(i)}\cdot\psi_2(i).
\end{equation*}
Then $\mathrm{typ}(L)=(f^0,\psi^0)$ is given by
\begin{equation*}
    f^0(i)=n(i)+\frac{2}{\cfactor}\cdot m(i-1)+l(i-2),\quad\text{and}\quad\psi^0(i)=\psi_1(i)\cdot\epsilon^{m(i-1)}\cdot\psi_2(i-2).\qedhere
\end{equation*}
\end{theorem}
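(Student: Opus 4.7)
The plan is to construct an orthogonal decomposition $V = \bigoplus_{j\in\Z} W^{(j)}$ such that each $L \cap W^{(j)}$ is a self-dual lattice at level $j$ of type $(f^0(j), \psi^0(j))$; by \Cref{latticeClassification} this will immediately give $\mathrm{typ}(L) = (f^0, \psi^0)$. The decomposition $f^0(j) = n(j) + (2/\cfactor) m(j-1) + l(j-2)$ in the statement suggests that $W^{(j)}$ should be assembled as an orthogonal sum of three pieces coming from the $\Lambda$-levels $j$, $j-1$, and $j-2$---a ``no-shift'' piece, a ``shift-by-one'' hyperbolic piece, and a ``shift-by-two'' piece, respectively.

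First I would refine each $V^{(i)}_\Lambda$. Lifting a Witt decomposition of $\Lambda^{(i)}/\varpi\Lambda^{(i)}$ adapted to the image of $N_i$ (whose radical is $M_i/\varpi\Lambda^{(i)}$) via Hensel's lemma (which requires $p$ odd in case \Ogen, whereas cases \Ugen and \SPgen are automatic), I obtain
\[
V^{(i)}_\Lambda = \tilde B_1^{(i)} \perp (\tilde P^{(i)} \oplus \tilde Q^{(i)}) \perp \tilde B_2^{(i)},
\]
where $\tilde B_1^{(i)}\cap\Lambda^{(i)}$ and $\tilde B_2^{(i)}\cap\Lambda^{(i)}$ are level-$i$ self-dual lattices of types $(n(i),\psi_1(i))$ and $(l(i),\psi_2(i))$, and $\tilde P^{(i)}, \tilde Q^{(i)}$ are dual isotropic $\O_F$-lattices of rank $m(i)$ with perfect pairing $\tilde P^{(i)} \times \tilde Q^{(i)} \to \varpi^i\O_F$. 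By construction, $N_i = (\tilde B_1^{(i)}\cap\Lambda^{(i)}) \oplus \tilde P^{(i)} \oplus \varpi\tilde Q^{(i)} \oplus \varpi(\tilde B_2^{(i)}\cap\Lambda^{(i)})$.

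I then assemble $W^{(j)}$ from three pieces: (i) a ``no-shift'' piece spanned by lifts $\tilde v \in L \cap V^{(\ge j)}_\Lambda$ of a basis of $\tilde B_1^{(j)}\cap\Lambda^{(j)} \subseteq N_j$, which is level-$j$ self-dual of type $(n(j),\psi_1(j))$ since the higher-level tails of $\tilde v$ contribute to $\langle \tilde v, \tilde v'\rangle$ only modulo $\varpi^{j+1}$; (ii) a hyperbolic ``shift-by-one'' piece spanned by lifts $\tilde p \in L \cap V^{(\ge j-1)}_\Lambda$ of a basis of $\tilde P^{(j-1)} \subseteq N_{j-1}$ together with the elements $\varpi q$ for $q$ in a basis of $\tilde Q^{(j-1)}$ (which lie in $\varpi\Lambda\subseteq L$), giving a level-$j$ self-dual lattice of type $(2m(j-1)/\cfactor, \epsilon^{m(j-1)})$ (the sign $\epsilon^{m(j-1)}$ arises from the determinant of a hyperbolic form in case \Ogen and is trivial in cases \Ugen and \SPgen); and (iii) a ``shift-by-two'' piece $\varpi(\tilde B_2^{(j-2)}\cap\Lambda^{(j-2)}) \subseteq \varpi\Lambda\subseteq L$, level-$j$ self-dual of type $(l(j-2),\psi_2(j-2))$ (its sign is preserved by the $\varpi^2$-scaling).

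The hardest step is to choose the lifts $\tilde v$ and $\tilde p$ so that the $W^{(j)}$'s---and also the three pieces within each $W^{(j)}$---are mutually orthogonal in $V$, since the lifts carry ``tails'' at higher $\Lambda$-levels that generate unwanted cross-pairings. I would resolve this via a top-down Gram--Schmidt induction: after building mutually orthogonal self-dual pieces $L \cap W^{(j')}$ for $j' > j$---which inductively exhaust $L \cap V^{(>j)}_\Lambda$---each new lift at level $j$ can be adjusted by an element of $L \cap V^{(>j)}_\Lambda = \bigoplus_{j' > j}(L \cap W^{(j')})$ to cancel its cross-pairings against the higher pieces. These adjustment equations are solvable because the form on each $L \cap W^{(j')}$, after rescaling by $\varpi^{-j'}$, is $\O_F$-unimodular (by level-$j'$ self-duality), and the cross-pairings to be cancelled automatically take values in $\varpi^{j'}\O_F$ by the orthogonality of distinct $V^{(i)}_\Lambda$'s. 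Combining the three contributions at each level $j$ then yields $\psi^0(j) = \psi_1(j)\cdot\epsilon^{m(j-1)}\cdot\psi_2(j-2)$ and the claimed dimension formula for $f^0(j)$.
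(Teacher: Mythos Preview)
Your approach is essentially the paper's: split each $V^{(i)}_\Lambda$ by a Witt decomposition adapted to $N_i$, regroup the four resulting pieces into $L$-level blocks at levels $i,\,i+1,\,i+2$, and then orthogonalize; the paper carries out this last step via explicit moment-matrix row reductions on a concrete basis $\underline{\beta(i)}=(\underline{v_L(i)},\underline{x_L(i-2)},\underline{u_L(i-1)},\underline{y_L(i-1)})$ rather than an abstract Gram--Schmidt, but the content is the same.

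Two small caveats in your write-up. First, the displayed identity $L\cap V^{(>j)}_\Lambda=\bigoplus_{j'>j}(L\cap W^{(j')})$ is not literally true: for $j'\in\{j+1,j+2\}$ the shift-by-one and shift-by-two pieces of $W^{(j')}$ have main parts in $V^{(j'-1)}_\Lambda$ and $V^{(j'-2)}_\Lambda$, which lie in $V^{(\le j)}_\Lambda$. This is harmless---what you actually need is only that the adjustment space $\bigoplus_{j'>j}(L\cap W^{(j')})$ sits inside $L$, which is clear. Second, the claim that the cross-pairings lie in $\varpi^{j'}\O_F$ requires more than ``orthogonality of distinct $V^{(i)}_\Lambda$'s'': one must also use that the tail of any lift projects into $N_k$ at each $\Lambda$-level $k$, and then invoke the Witt orthogonality $\tilde B_1^{(k)}\perp\tilde P^{(k)}\perp\tilde B_2^{(k)}$ you already arranged (this is exactly what the paper's explicit $\langle\underline{\beta(i)},\underline{\beta(j)}\rangle$ computations are checking). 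With these points filled in, your argument goes through.
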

\begin{proof}
First, on the $\F_q$ vector spaces $\Lambda^{(i)}/\varpi\Lambda^{(i)},$ we consider the following basis: i) choose a basis $\underline{u(i)}$ of $M_i/\varpi\Lambda^{(i)},$ ii) extend $\underline{u(i)}$ to a basis $\underline{u(i)},\underline{v(i)}$ of $N_i/\varpi\Lambda^{(i)},$ iii) extend $\underline{u(i)}$ to a basis $\underline{u(i)},\underline{x(i)}$ of $\varpi^{i+1}N_i^\vee/\varpi\Lambda^{(i)},$ iv) extend $\underline{u(i)},\underline{v(i)},\underline{x(i)}$ to a basis $\underline{u(i)},\underline{v(i)},\underline{x(i)},\underline{y_0(i)}$ of $\Lambda^{(i)}/\varpi\Lambda^{(i)}.$ We lift this to an $\O_F$-basis of $\Lambda^{(i)}$ and use the same notation for simplicity. Since $\Lambda^{(i)}$ is self-dual under $\langle\cdot,\cdot\rangle^{(i)},$ the moment matrix of $\Lambda^{(i)}$ must be in $\mathrm{GL}_{\gamma\cdot e^0(i)}(\O_F)$:
\begin{equation*}
    T_{\langle\cdot,\cdot\rangle^{(i)}}(\underline{u(i)},\underline{v(i)},\underline{x(i)},\underline{y_0(i)})=\begin{pmatrix}\varpi*&\varpi*&\varpi*&*\\\varpi*&T_{\langle\cdot,\cdot\rangle^{(i)}}(\underline{v(i)})&\varpi*&*\\ 
    \varpi*&\varpi*&T_{\langle\cdot,\cdot\rangle^{(i)}}(\underline{x(i)})&*\\
    *&*&*&T_{\langle\cdot,\cdot\rangle^{(i)}}(\underline{y_0(i)})\end{pmatrix}\in\mathrm{GL}_{\gamma\cdot e^0(i)}(\O_F),
\end{equation*}
where $*$ represents matrices with entries in $\O_F.$ A standard row reduction allow us to modify this basis in such a way that
\begin{equation*}
    T_{\langle\cdot,\cdot\rangle^{(i)}}(\underline{u(i)},\underline{v(i)},\underline{x(i)},\underline{y_0(i)})=\begin{pmatrix}0&0&0&A(i)\\0&T_{\langle\cdot,\cdot\rangle^{(i)}}(\underline{v(i)})&0&0\\ 
    0&0&T_{\langle\cdot,\cdot\rangle^{(i)}}(\underline{x(i)})&0\\
    A(i)^*&0&0&T_{\langle\cdot,\cdot\rangle^{(i)}}(\underline{y_0(i)})\end{pmatrix}.
\end{equation*}
Now we claim we may also assume $T_{\langle\cdot,\cdot\rangle^{(i)}}(\underline{y_0(i)})=0$: Consider $\underline{y(i)}=\underline{y_0(i)}+X\cdot\underline{u(i)}.$ Then
\begin{equation*}
    T_{\langle\cdot,\cdot\rangle^{(i)}}(\underline{y(i)})=T_{\langle\cdot,\cdot\rangle^{(i)}}(\underline{y_0(i)})+X\cdot A(i)+A(i)^*\cdot X^*.
\end{equation*}
We can choose $X$ so that this is $0$ as i) $A(i)$ is invertible and ii) the map $\mathrm{Mat}_{k\times k}(\O_F)\to\mathrm{Mat}_{k\times k}(\O_F)^{*=1}$ given by $X\mapsto X+X^*$ is surjective.\footnote{Here we are using that $q$ is odd in case \Ogen.}
We will write
\begin{equation*}
    T_{\langle\cdot,\cdot\rangle^{(i)}}(\underline{u(i)},\underline{v(i)},\underline{x(i)},\underline{y(i)})=\begin{pmatrix}0&0&0&M(i)\\0&V(i)&0&0\\ 
    0&0&X(i)&0\\
    M(i)^*&0&0&0\end{pmatrix}\in\mathrm{GL}_{\gamma\cdot e^0(i)}(\O_F).
\end{equation*}
Taking determinants, we have
\begin{equation*}
    \chi(i)=\mathrm{sign}\left((-1)^m\det(V(i)X(i)M(i)M(i)^*)\right)=\epsilon^m\cdot\psi_1(i)\cdot\psi_2(i).
\end{equation*}

Under this $\O_F$-basis
\begin{equation*}
    \ldots,\underline{u(i+1)},\underline{v(i+1)},\underline{x(i+1)},\underline{y(i+1)},\underline{u(i)},\underline{v(i)},\underline{x(i)},\underline{y(i)},\ldots
\end{equation*}
of $\Lambda,$ the lattice $L$ can be represented by the $\O_F$-span of the columns of a matrix of the form
\begin{equation}\label{Lmatrix}
    \begin{psmallmatrix}
        \ddots&&&&&&&&&&\\
        &I_{m(i)}&0&0&0&&0&0&0&0&\\
        &0&I_{\cfactor\cdot n(i)}&0&0&&0&0&0&0&\\
        &0&0&\varpi\cdot I_{\cfactor\cdot l(i)}&0&&*&*&0&0&\\
        &0&0&0&\varpi\cdot I_{m(i)}&&*&*&0&0&\\
        &&&&&\ddots&&&&&\\
        &0&0&0&0&&I_{m(j)}&0&0&0&\\
        &0&0&0&0&&0&I_{\cfactor\cdot n(j)}&0&0&\\
        &0&0&0&0&&0&0&\varpi\cdot I_{\cfactor\cdot l(j)}&0&\\
        &0&0&0&0&&0&0&0&\varpi\cdot I_{m(j)}&\\
        &&&&&&&&&&\ddots
    \end{psmallmatrix}
\end{equation}
where $i>j$ in the above. We will denote such basis of $L$ by
\begin{equation*}
\ldots,\underline{u_L(i+1)},\underline{v_L(i+1)},\underline{x_L(i+1)},\underline{y_L(i+1)},\underline{u_L(i)},\underline{v_L(i)},\underline{x_L(i)},\underline{y_L(i)},\ldots,
\end{equation*}
and we let $\underline{\alpha(i)}=(\underline{u_L(i)},\underline{v_L(i)},\underline{x_L(i)},\underline{y_L(i)}).$

Now we analyze the moment matrix for the $\O_F$-basis $(\underline{\alpha(i)})_{i\in\Z}$ of $L.$ We have
\begin{equation*}
\begin{split}
    \langle\underline{\alpha(i)},\underline{\alpha(i)}\rangle&=\begin{pmatrix}&&&\varpi^{i+1} M(i)\\&\varpi^i V(i)&&\\&&\varpi^{i+2} X(i)&\\\varpi^{i+1} M(i)^*&&&\end{pmatrix}+\sum_{k>i}\begin{pmatrix}\varpi^k*&\varpi^k*&0&0\\\varpi^k*&\varpi^k*&0&0\\0&0&0&0\\0&0&0&0\end{pmatrix}\\
    &=\begin{pmatrix}\varpi^{i+1}*&\varpi^{i+1}*&&\varpi^{i+1} M(i)\\\varpi^{i+1}*&\varpi^i V_L(i)&&\\&&\varpi^{i+2} X(i)&\\\varpi^{i+1} M(i)^*&&&\end{pmatrix}
\end{split}
\end{equation*}
for some $V_L(i)\equiv V(i)\mod\varpi\O_F,$ and if $i>j,$
\begin{equation*}
    \langle\underline{\alpha(i)},\underline{\alpha(j)}\rangle=\begin{pmatrix}\varpi^i*&\varpi^i*&0&0\\0&0&0&0\\\varpi^{i+1}*&\varpi^{i+1}*&0&0\\0&0&0&0\end{pmatrix}+\sum_{k>i}\begin{pmatrix}\varpi^k*&\varpi^k*&0&0\\\varpi^k*&\varpi^k*&0&0\\0&0&0&0\\0&0&0&0\end{pmatrix}=\begin{pmatrix}\varpi^i*&\varpi^i*&0&0\\\varpi^{i+1}*&\varpi^{i+1}&0&0\\\varpi^{i+1}*&\varpi^{i+1}*&0&0\\0&0&0&0\end{pmatrix}.
\end{equation*}

Now consider $\underline{\beta(i)}=(\underline{v_L(i)},\underline{x_L(i-2)},\underline{u_L(i-1)},\underline{y_L(i-1)}).$ Then
\begin{equation*}
\langle\underline{\beta(i)},\underline{\beta(i)}\rangle=\begin{pmatrix}\varpi^iV_L(i)&0&\varpi^{i+1}*&0\\0&\varpi^iX(i-2)&0&0\\\varpi^{i+1}*&0&\varpi^i*&\varpi^iM(i-1)\\0&0&\varpi^{i}M(i-1)^*&0\end{pmatrix},
\end{equation*}
as well as
\begin{equation*}
\langle\underline{\beta(i)},\underline{\beta(i-1)}\rangle=\begin{pmatrix}\varpi^{i+1}*&0&\varpi^{i+1}*&0\\0&0&0&0\\\varpi^{i}*&0&\varpi^{i-1}*&0\\0&0&0&0\end{pmatrix},\quad\langle\underline{\beta(i)},\underline{\beta(i-2)}\rangle=\begin{pmatrix}\varpi^{i+1}*&0&\varpi^{i+1}*&0\\0&0&\varpi^{i-1}*&0\\\varpi^{i-1}*&0&\varpi^{i-1}*&0\\0&0&0&0\end{pmatrix}.
\end{equation*}
and if $i>j+1,$
\begin{equation*}
\langle\underline{\beta(i)},\underline{\beta(j)}\rangle=\begin{pmatrix}\varpi^{i+1}*&0&\varpi^{i+1}*&0\\\varpi^{i-1}*&0&\varpi^{i-1}*&0\\\varpi^{i-1}*&0&\varpi^{i-1}*&0\\0&0&0&0\end{pmatrix}.
\end{equation*}
For each $i,$ since $\varpi^iM(i-1)$ is the only nonzero term in its columns, and since all the other terms to its left have valuation $\ge i,$ we can perform a series of row reductions to obtain a new basis $\ldots,\underline{\beta_0(i+1)},\underline{\beta_0(i)},\ldots$ such that
\begin{equation*}
\langle\underline{\beta_0(i)},\underline{\beta_0(i)}\rangle=\begin{pmatrix}\varpi^iV_L(i)&0&0&0\\0&\varpi^iX(i-2)&0&0\\0&0&\varpi^i*&\varpi^iM(i-1)\\0&0&\varpi^{i}M(i-1)^*&0\end{pmatrix}\eqdef E(i),
\end{equation*}
and such that
\begin{equation*}
\langle\underline{\beta_0(i)},\underline{\beta_0(j)}\rangle=\begin{pmatrix}\varpi^{i+1}*&0&0&0\\0&0&0&0\\0&0&0&0\\0&0&0&0\end{pmatrix}\quad\text{for }i>j\ge i-2,
\end{equation*}
and
\begin{equation*}
    \langle\underline{\beta_0(i)},\underline{\beta_0(j)}\rangle=\begin{pmatrix}\varpi^{i+1}*&0&0&0\\\varpi^{i-1}*&0&0&0\\0&0&0&0\\0&0&0&0\end{pmatrix}\quad \text{for } i-2>j.
\end{equation*}
Finally, it is easy to see that there is a further row reduction which makes the moment matrix become block-diagonal with blocks $E(i).$ Now we have $\frac{1}{\varpi^i}E(i)\in\mathrm{GL}_{\cfactor\cdot n(i)+2\cdot m(i-1)+\cfactor\cdot l(i-2)}(\O_F)$ and
\begin{equation*}
    \mathrm{sign}(\det(E(i)/\varpi^i))=\mathrm{sign}(\det V_L(i))\cdot\mathrm{sign}(\det X(i-2))\cdot\epsilon^{m(i-1)}=\psi_1(i)\cdot\psi_2(i-2)\cdot\epsilon^{m(i-1)},
\end{equation*}
and thus $f^0(i)=n(i)+\frac{2}{\cfactor}\cdot m(i-1)+l(i-2)$ and $\psi^0(i)=\psi_1(i)\cdot\epsilon^{m(i-1)}\cdot\psi_2(i-2),$ as claimed.
\end{proof}

\begin{corollary}\label{mainLemmaCount}
Fix $\Lambda\in\Lat(V)$ and denote $\mathrm{typ}(\Lambda)=(e^0,\chi^0).$ Consider another pair $(f^0,\psi^0).$ If there exist $L\in\Lat(V)$ with $\varpi\Lambda\subseteq L\subseteq\Lambda$ and $\mathrm{typ}(L)=(f^0,\psi^0),$ then there exist $n,m,l\colon\Z\to\Z_{\ge0}$ and $\psi_1,\psi_2\colon\Z\to\mathrm{Sign}$ with
\begin{equation*}
    e^0(i)=n(i)+\frac{2}{\cfactor}\cdot m(i)+l(i)\quad\chi^0(i)=\psi_1(i)\cdot\epsilon^{m(i)}\cdot\psi_2(i)
\end{equation*}
and
\begin{equation*}
    f^0(i)=n(i)+\frac{2}{\cfactor}\cdot m(i-1)+l(i-2),\quad \psi^0(i)=\psi_1(i)\cdot\epsilon^{m(i-1)}\cdot\psi_2(i-2).
\end{equation*}
In this case, the number of such $L$ is
\begin{equation*}
    \sum_{(n,m,l,\psi_1,\psi_2)}q^{\sum_{i>j}(m(i)+\cfactor\cdot l(i))(m(j)+\cfactor\cdot n(j))}\prod_iQ\Typec{n(i)}{\psi_1(i)}{m(i)}{}{l(i)}{\psi_2(i)},
\end{equation*}
where the sum is over all tuples $(n,m,l,\psi_1,\psi_2)$ as above.
\end{corollary}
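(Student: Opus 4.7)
The plan is to factor the enumeration of $L$ through the sequence $(N_i)_{i\in\Z}$ it determines via $N_i = \mathrm{proj}_{V^{(i)}_\Lambda}(L \cap V^{(\ge i)}_\Lambda)$. The existence of data $(n, m, l, \psi_1, \psi_2)$ satisfying both compatibility relations is an immediate consequence of \Cref{mainLemma}: it supplies the relations with $(e^0, \chi^0)$ (read off of the $N_i$) and with $(f^0, \psi^0)$ (read off of the formulas for $\mathrm{typ}(L)$). What remains is to count, for prescribed data, the number of $L$ realizing it, and I will do this in two stages.

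First, I count the sequences $(N_i)_i$. Each $N_i$ satisfies $\varpi\Lambda^{(i)} \subseteq N_i \subseteq \Lambda^{(i)}$, equivalently is an $\F_q$-subspace of the nondegenerate space $\bar\Lambda^{(i)} := \Lambda^{(i)}/\varpi\Lambda^{(i)}$ of type $(e^0(i), \chi^0(i))$ for the form induced by $\langle\cdot,\cdot\rangle^{(i)}$. The conditions $\dim(M_i/\varpi\Lambda^{(i)}) = m(i)$ and $\mathrm{typ}(N_i/M_i) = (n(i), \psi_1(i))$ are exactly those of \Cref{mainLemmaFinite}, and the choices are independent across $i$, so this factor is $\prod_i Q\Typec{n(i)}{\psi_1(i)}{m(i)}{}{l(i)}{\psi_2(i)}$.

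Second, for fixed $(N_i)_i$, I count $L$ by identifying $L$ with its mod-$\varpi$ reduction $\bar L := L/\varpi\Lambda \subseteq \bar\Lambda = \bigoplus_i \bar\Lambda^{(i)}$. Since $\varpi\Lambda \subseteq L \subseteq \Lambda$, the modular law identifies $\bar L^{(\ge i)} := \bar L \cap \bigoplus_{k\ge i}\bar\Lambda^{(k)}$ with the mod-$\varpi$ image of $L \cap V^{(\ge i)}_\Lambda$, so the condition on $(N_i)$ is exactly that the graded pieces $\bar L^{(\ge i)}/\bar L^{(\ge i+1)}$ map isomorphically onto $N_i \subseteq \bar\Lambda^{(i)}$. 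Building $\bar L$ by descending induction on $i$, each extension of $\bar L^{(\ge i+1)}$ to $\bar L^{(\ge i)}$ amounts to choosing a lift of $N_i$ inside $\bigoplus_{k\ge i}\bar\Lambda^{(k)}$, and the set of such lifts is a torsor under $\mathrm{Hom}_{\F_q}\!\bigl(N_i,\, \bigl(\bigoplus_{k>i}\bar\Lambda^{(k)}\bigr)/\bar L^{(\ge i+1)}\bigr)$.

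The dimension counts $\dim_{\F_q} N_i = m(i) + \cfactor n(i)$ and $\dim_{\F_q}\bar\Lambda^{(k)} - \dim_{\F_q} N_k = m(k) + \cfactor l(k)$ then give a torsor of size $q^{(m(i) + \cfactor n(i))\sum_{k>i}(m(k) + \cfactor l(k))}$ at step $i$. Multiplying over $i$ and relabeling so that the larger summation index becomes the first argument yields $q^{\sum_{i>j}(m(i) + \cfactor l(i))(m(j) + \cfactor n(j))}$, the desired factor. The only subtle point is matching the filtration graded-pieces with the projection definition of $N_i$ from \Cref{mainLemma} and tracking the dimensions uniformly across the three cases via $\cfactor$; after that, the enumeration is mechanical.
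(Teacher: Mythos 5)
Your proposal is correct and follows essentially the same route as the paper: both factor the count through the data $(N_i)_i$, obtain the product of $Q$-factors from \Cref{mainLemmaFinite}, and then count the lattices $L$ realizing a fixed $(N_i)_i$. The only cosmetic difference is that the paper reads the power of $q$ directly off the free $*$-entries in the explicit matrix \eqref{Lmatrix} from the proof of \Cref{mainLemma}, whereas you rederive the same count intrinsically as a torsor under $\mathrm{Hom}_{\F_q}\bigl(N_i,\bigl(\bigoplus_{k>i}\bar\Lambda^{(k)}\bigr)/\bar L^{(\ge i+1)}\bigr)$.
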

\begin{proof}
This follows from \Cref{mainLemma}: for each $i,$ given $(n(i),\psi_1(i),m(i),l(i),\psi_2(i)),$ the number of $N_i$ is $Q\Typec{n(i)}{\psi_1(i)}{m(i)}{}{l(i)}{\psi_2(i)}$ by \Cref{mainLemmaFinite}; given all the $N_i,$ the number of $L$ is
\begin{equation*}
    q^{\sum_{i>j}(m(i)+\cfactor\cdot l(i))(m(j)+\cfactor\cdot n(j))}
\end{equation*}
by the description in \eqref{Lmatrix}.
\end{proof}
\section{Explicit models of spherical functions}\label{StraighteningSection}
We fix a coefficient ring $R\neq0.$\index{R@$R$} In the case \Ogen, we assume $2\in R^\times.$
\subsection{Spherical functions and Hecke action}
Let $r\ge1$ be a positive integer, and let $V=F^{\cfactor r},$ that is,
\begin{case}{\Ugen}
$V=F^r,$
\end{case}
\begin{case}{\Ogen}
$V=F^r,$
\end{case}
\begin{case}{\SPgen}
$V=F^{2r}.$
\end{case}
Let $G\defeq\mathrm{GL}_{\cfactor r}(F)=\mathrm{GL}(V)$ be the associated $p$-adic Lie group, with maximal open compact subgroup $K\defeq\mathrm{GL}_{\cfactor r}(\O_F).$ We consider
\index{X@$X$}
\begin{equation*}
    X\defeq (G)^{*=1}=\begin{cases}\mathrm{Herm}_r(F/F_0)\cap G&\text{in the case \Ugen,}\\\mathrm{Sym}_r(F)\cap G&\text{in the case \Ogen,}\\\mathrm{Alt}_{2r}(F)\cap G&\text{in the case \SPgen}\end{cases}
\end{equation*}
the space of nondegenerate Hermitian/symmetric/alternating matrices.

The group $G$ acts on $X$ on the right by $x\cdot g\defeq g^* xg.$ This allow us to identify
\begin{equation*}
    X=\bigsqcup_{(V,\langle\cdot,\cdot\rangle)} H(V,\langle\cdot,\cdot,\rangle)\backslash G,
\end{equation*}
where $\langle\cdot,\cdot\rangle$ runs through the (finitely many) isomorphism classes of nondegenerate
\begin{case}{\Ugen}
$F/F_0$-Hermitian forms on $V,$
\end{case}
\begin{case}{\Ogen}
symmetric forms on $V,$
\end{case}
\begin{case}{\SPgen}
alternating forms on $V,$
\end{case}
and $H(V,\langle\cdot,\cdot\rangle)$ denotes the corresponding automorphism groups.

\begin{definition}
We denote $\Typ^0_r$\index{Typ0r@$\Typ^0_r$} to be the subset of $\Typ^0$ consisting of $(e^0,\chi^0)$ with $\sum_ie^0(i)=r.$
\end{definition}

\begin{proposition}\label{XoverKclassification}
We have a bijection
\begin{equation*}
    X/K=\bigsqcup_{(V,\langle\cdot,\cdot,\rangle)}H(V,\langle\cdot,\cdot,\rangle)\backslash G/K\xrightiso{\mathrm{typ}}\Typ^0_r.
\end{equation*}
\end{proposition}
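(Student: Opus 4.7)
The plan is to chain together two identifications already in place: the decomposition of $X$ into $H$-orbits, and the lattice-theoretic classification of $H\backslash\Lat(V)$ via $\mathrm{typ}$.

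First I would verify the decomposition $X = \bigsqcup_{(V,\langle\cdot,\cdot\rangle)} H(V,\langle\cdot,\cdot\rangle)\backslash G$ displayed just before the statement. The right $G$-action $x\cdot g = g^*xg$ is exactly the action on the Gram matrix of a form under change of basis, so two elements of $X$ lie in the same $G$-orbit iff they correspond to isomorphic forms on $V$. The stabilizer of a representative form is precisely its automorphism group $H(V,\langle\cdot,\cdot\rangle)$. Quotienting by the right $K$-action gives
\begin{equation*}
    X/K = \bigsqcup_{(V,\langle\cdot,\cdot\rangle)} H(V,\langle\cdot,\cdot\rangle)\backslash G/K.
\end{equation*}

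Next, I would use the standard identification $G/K = \Lat(V)$ (sending $g\mapsto g\O_F^{\cfactor r}$) under which $H(V,\langle\cdot,\cdot\rangle)\backslash G/K$ matches $H(V,\langle\cdot,\cdot\rangle)\backslash\Lat(V)$. By \Cref{latticeClassification}, the map $\mathrm{typ}$ realizes the latter as a subset of $\Typ^0$, and as the form varies the union of these subsets covers all of $\Typ^0$. It remains to check that the image of a single summand lands in $\Typ^0_r$ and, conversely, that each $(e^0,\chi^0)\in\Typ^0_r$ comes from exactly one summand.

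For the dimension bookkeeping: for $\Lambda\in\Lat(V)$ with orthogonal decomposition $V = \bigoplus_i V^{(i)}_\Lambda$, the definition
\begin{equation*}
    e^0(i) = \tfrac{1}{\cfactor}\dim_F V^{(i)}_\Lambda
\end{equation*}
gives $\sum_i e^0(i) = \tfrac{1}{\cfactor}\dim_F V = r$, so $\mathrm{typ}(\Lambda)\in\Typ^0_r$. Conversely, any $(e^0,\chi^0)\in\Typ^0_r$ determines a nondegenerate form on a space of dimension $\cfactor r$ (namely the orthogonal sum of the local pieces dictated by $(e^0(i),\chi^0(i))$), and by \Cref{latticeClassification} it arises as $\mathrm{typ}(\Lambda)$ for a lattice in that form's space. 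The disjointness of the summands (different forms give non-isomorphic $(V,\langle\cdot,\cdot\rangle)$, hence disjoint image) combined with injectivity of $\mathrm{typ}$ on each summand gives a global bijection $X/K \xrightiso{\mathrm{typ}} \Typ^0_r$.

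There is no real obstacle: the proposition is a packaging statement whose content is entirely contained in the $G$-orbit decomposition of $X$ and in \Cref{latticeClassification}. The only subtlety worth a line or two is checking that the dimension condition $\sum_i e^0(i) = r$ cuts out exactly the image of $X/K$ in $\Typ^0$, which follows from the normalization factor $\cfactor$ relating $\dim_F V$ to the counted rank $r$.
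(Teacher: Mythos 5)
Your proof is correct and follows essentially the same route as the paper: the paper's own proof simply notes the identification $G/K\iso\Lat(V)$ via $g\mapsto g\cdot\O_F^{\cfactor r}$ and then declares the claim a restatement of \Cref{latticeClassification}. You have merely spelled out the orbit decomposition of $X$ and the dimension bookkeeping that the paper leaves implicit, which is harmless.
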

\begin{proof}
Note that $G/K\iso\Lat(V)$ via $g\mapsto g\cdot(\O_F^{\cfactor r}).$ With this, the claim is a re-statement of \Cref{latticeClassification}.
\end{proof}

\begin{definition}
We denote $C^\infty(X/K)$
\index{C(X/K)@$C^\infty(X/K)$}
to be the $R$-module of locally constant functions $X/K\to R.$ We denote $\mathcal{S}(X/K)\defeq C^\infty_c(X/K)\subseteq C^\infty(X/K)$\index{S(X/K)@$\mathcal{S}(X/K)$} to be the submodule of compactly supported functions. These are modules over the ($R$-valued) Hecke algebra $\mathcal{H}(G,K)\defeq C_c^\infty(K\backslash G\slash K)$\index{H(G,K)@$\mathcal{H}(G,K)$} via convolution.
\end{definition}
\Cref{XoverKclassification} induces isomorphisms
\begin{equation*}
    C^\infty(X/K)\rightiso R\llbracket\Typ^0_r\rrbracket,\quad\mathcal{S}(X/K)\rightiso R[\Typ^0_r],
\end{equation*}
and we use these to transport the action of $\mathcal{H}(G,K)$ to $R[\Typ^0_r]$ and $R\llbracket\Typ^0_r\rrbracket.$ For $T\in\mathcal{H}(G,K),$ we will denote $T^*\colon R[\Typ_r^0]\to R[\Typ_r^0]$\index{0()*@$(\cdot)^*$} to be the adjoint of $T$ under the perfect pairing
\index{0\{,\}@$\{\cdot,\cdot\}$}
\begin{equation*}
    \pair{\cdot}{\cdot}\colon R\llbracket\Typ^0_r\rrbracket\times R[\Typ^0_r]\to R.
\end{equation*}
\begin{definition}
For $0\le k\le \cfactor r,$ we denote $\mu_k\defeq(\underbrace{1,\ldots,1}_k,\underbrace{0,\ldots,0}_{\cfactor r-k})$ and $T_{k,r}\defeq\mathrm{char}(K\cdot\varpi^{\mu_k}\cdot K)$\index{Tk,r@$T_{k,r}$} the miniscule Hecke operators.
\end{definition}

Concretely, the action $T_{k,r}^*\colon R[\Typ_r^0]\to R[\Typ_r^0]$ is encoded as follows: if $(e^0,\chi^0)\in\Typ^0_r$ and $\Lambda\in\Lat(V)$ is such that $\mathrm{typ}(\Lambda)=(e^0,\chi^0),$ then
\begin{equation*}
    T_{k,r}^*(e^0,\chi^0)=\sum_{\substack{\varpi\Lambda\subseteq L\subseteq\Lambda\\\mathrm{length}(\Lambda/L)=k}}1\cdot\mathrm{typ}(L).
\end{equation*}
The right hand side is, of course, computed in \Cref{mainLemmaCount}. We will now work to show that it can be succinctly described by certain straightening relations.

\subsection{Straightening relations}
\begin{definition}\label{Typdef}
    For each $r\ge0,$ we consider $\Typ_r\defeq(\Z\times\mathrm{Sign})^r,$ and we denote $\Typ\defeq\bigsqcup_{r\ge0}\Typ_r.$\index{Typ@$\Typ,\ \Typ_r$} For $a,b\ge0,$ we consider the concatenation product $\star\colon\Typ_a\times\Typ_b\to\Typ_{a+b}.$\index{0*@$\star$} This makes $R[\Typ]$ into a graded noncommutative $R$-ring with graded pieces $\mathrm{Gr}_r(R[\Typ])=R[\Typ_r].$
\end{definition}

For $e\in\Z^r,$ $\chi\in\mathrm{Sign}^r,$ we will denote the element associated to $(e,\chi)\in\Typ_r$ by
\index{0(e_i,chi_i)@$\Typee{e_i}{\chi_i}{e_{i+1}}{\chi_{i+1}}$}\index{1delta(e,chi)@$\delta(e,\chi)$}
\begin{equation*}
    \delta(e,\chi)=\Typee{e_i}{\chi_i}{e_{i+1}}{\chi_{i+1}}\in R[\Typ].
\end{equation*}
In the case \Ogen, we will also need a certain variation: for a character $s\colon\mathrm{Sign}^r\to\{\pm1\},$ we denote
\index{1deltas(e)@$\delta_s(e)$}
\begin{equation*}
    \delta_s(e)\defeq\sum_{\chi\in\mathrm{Sign}^r}s(\chi)\Typee{e_i}{\chi_i}{e_{i+1}}{\chi_{i+1}}\in R[\Typ].
\end{equation*}
Note that we are assuming that $\#\mathrm{Sign}\in R^\times,$ and so we may write
\begin{equation*}
    \Typee{e_i}{\chi_i}{e_{i+1}}{\chi_{i+1}}=\frac{1}{(\#\mathrm{Sign})^r}\sum_{s\colon\mathrm{Sign}^r\to\{\pm1\}}s(\chi)\delta_s(e).
\end{equation*}

Moreover, if $(s_1,\ldots,s_r)\in(\widehat{\mathrm{Sign}})^r=\mathrm{Sign}^r$ are such that $s\colon\mathrm{Sign}^r\to\{\pm1\}$ is given by $s(\chi)=\prod_{i}s_i(\chi_i),$ then we also denote
\index{0(e_i,s_i)Sigma@$\Typee{e_i}{s_i}{e_{i+1}}{s_{i+1}}^\Sigma$}
\begin{equation*}
    \Typee{e_i}{s_i}{e_{i+1}}{s_{i+1}}^\Sigma=\delta_s(e).
\end{equation*}
Observe that $\delta_{s}(e)\star\delta_{s'}(e')$ is given simply by the concatenation
\begin{equation*}
    \Typee{e_a}{s_a}{e_1'}{s_1'}^\Sigma.
\end{equation*}

\begin{definition}\label{Reldef}
We consider the homogeneous two-sided ideal $\Rel\subseteq R[\Typ]$ which is generated by the following degree $2$ elements. We will also denote $\Rel_r\defeq\mathrm{Gr}_r(\Rel)\subseteq R[\Typ_r].$\index{Rel@$\Rel,\ \Rel_r$}
\begin{case}{\Ugen}
1) For $a\in\Z,$
\begin{equation*}
\Rel(a)\defeq\typeC{a}{a+1}-\typeC{a+1}{a}.
\end{equation*}

2) For $b>a,$
\begin{equation*}
\Rel(a,b)\defeq\typeC{a}{b}-\typeC{a+1}{b-1}-(-q_0)^{b-a-1}\left(\typeC{b}{a}-\typeC{b-1}{a+1}\right).
\end{equation*}
\end{case}
\begin{case}{\Ogen}
1) For $b-a\in2\Z_{\ge0}+1,$
\begin{equation*}
\begin{split}
\Rel\TypeC{a}{s_1}{b}{s_2}\defeq\ &
    \TypeC{a}{s_1}{b}{s_2}^\Sigma-\TypeC{a+1}{s_2}{b-1}{s_1}^\Sigma\\
    &-q^{(b-a-1)/2}\left(\TypeC{b}{s_2}{a}{s_1}^\Sigma-\TypeC{b-1}{s_1}{a+1}{s_2}^\Sigma\right).
\end{split}
\end{equation*}
2) For $b-a\in2\Z_{\ge0}$
\begin{equation*}
    \Rel\TypeC{a}{s}{b}{-s}\defeq \TypeC{a}{s}{b}{-s}^\Sigma+q^{(b-a)/2}\TypeC{b}{s}{a}{-s}^\Sigma
\end{equation*}
and if we denote
\begin{equation*}
\begin{split}
    \mathrm{RelHalf}\TypeC{a}{s}{b}{s}\defeq\ &-\TypeC{a}{s}{b}{s}^\Sigma+\TypeC{a-1}{s}{b+1}{s}^\Sigma\\
    &-\epsilon\left(\TypeC{a}{-s}{b}{-s}^\Sigma-\TypeC{a+1}{-s}{b-1}{-s}^\Sigma\right),
\end{split}
\end{equation*}
then also, for $b-a\in2\Z_{\ge1},$
\begin{equation*}
    \Rel\TypeC{a}{s}{b}{s}\defeq \mathrm{RelHalf}\TypeC{a+1}{s}{b-1}{s}+q^{(b-a-2)/2}\mathrm{RelHalf}\TypeC{b-1}{s}{a+1}{s}.
\end{equation*}
\end{case}
\begin{case}{\SPgen}
1) For $a\in\Z,$
\begin{equation*}
\Rel(a)\defeq\typeC{a}{a+1}-\typeC{a+1}{a}.
\end{equation*}

2) For $b>a,$
\begin{equation*}
\Rel(a,b)\defeq\typeC{a}{b}-\typeC{a+1}{b-1}-q^{2(b-a-1)}\left(\typeC{b}{a}-\typeC{b-1}{a+1}\right)
\end{equation*}
\end{case}
\end{definition}
\begin{remark}
Note that in cases \Ugen and \SPgen, $\Rel(a,a+1)=2\Rel(a),$ so $\Rel(a)$ is not necessary if $2\in R^\times.$
\end{remark}
\begin{remark}\label{strimplications}
In case \Ogen, we have
\begin{equation*}
    \Rel\TypeC{a}{s}{a}{-s}=2\TypeC{a}{s}{a}{-s}^\Sigma,
\end{equation*}
and applying this for both $s=+$ and $s=-$ give us
\begin{equation}\label{changeDets}
    \TypeC{a}{\psi_1}{a}{\psi_2}\equiv\TypeC{a}{-\psi_1}{a}{-\psi_2}\mod\Rel.
\end{equation}

Moreover,
\begin{equation*}
    \Rel\TypeC{a}{s_1}{a+1}{s_2}=2\left(\TypeC{a}{s_1}{a+1}{s_2}^\Sigma-\TypeC{a+1}{s_2}{a}{s_1}^\Sigma\right)
\end{equation*}
and since we are assuming $2$ is invertible, this implies
\begin{equation}\label{relation1inScase}
    \TypeC{a}{\psi_1}{a+1}{\psi_2}\equiv\TypeC{a+1}{\psi_2}{a}{\psi_1}\mod \Rel.
\end{equation}
Similarly, $\Rel\TypeC{a}{s}{a+2}{s}=2\mathrm{RelHalf}\TypeC{a+1}{s}{a+1}{s},$ so we also have
\begin{equation*}
    \TypeC{a}{s}{a+2}{s}^\Sigma\equiv\TypeC{a+1}{s}{a+1}{s}^\Sigma+\epsilon\TypeC{a+1}{-s}{a+1}{-s}^\Sigma-\epsilon\TypeC{a+2}{-s}{a}{-s}^\Sigma\mod\Rel
\end{equation*}
and together with $\Rel\TypeC{a}{s}{a+2}{-s},$ this implies
\begin{equation}\label{concreteRel}
\begin{split}
    \TypeC{a}{\psi_1}{a+2}{\psi_2}\equiv\ &(1+\epsilon\psi_1\psi_2)\TypeC{a+1}{\psi_1}{a+1}{\psi_2}\\&-\epsilon\psi_1\psi_2\left(\frac{1+q\epsilon}{2}\TypeC{a+2}{\psi_2}{a}{\psi_1}+\frac{1-q\epsilon}{2}\TypeC{a+2}{-\psi_2}{a}{-\psi_1}\right).
\end{split}
\end{equation}
\end{remark}

For $a\in\Z$ and $\psi\in\mathrm{Sign},$ we will often write
\index{0(f^k,psi)@$\Type{(a)^k}{\psi}$}
\begin{equation*}
    \TypeCC{(a)^k}{\psi}\in R[\Typ_k]/\Rel_k
\end{equation*}
to denote the image of any concatenation product of the form $\TypeCC{a}{\psi_1}\star\cdots\star\TypeCC{a}{\psi_k}$ with $\prod\psi_i=\psi.$ Note that this is well defined by \eqref{changeDets}.

\begin{definition}\label{tdef}
For $\varepsilon\in\Z^r,$ we consider the translation endomorphisms $t(\varepsilon)\colon R[\Typ_r]\to R[\Typ_r]$\index{t(epsilon)@$t(\varepsilon)$} given by $\delta(e,\chi)\mapsto\delta(e+\varepsilon,\chi).$ We also denote $t_i(k)\defeq t(0,\ldots,0,k,0,\ldots,0)$ where $k\in\Z$ is in the $i$-th position.
\end{definition}

\begin{proposition}\label{preserveshalf}
    The following operators preserve $\Rel_2.$
    \begin{equation*}
        \begin{array}{c|c|c}
        \Ugen&\Ogen&\SPgen\\
        \hline
        -q_0\cdot t_1(1)+t_2(1)&q\cdot t_1(2)+t_2(2)&q^2\cdot t_1(1)+t_2(1)
        \end{array}
    \end{equation*}
\end{proposition}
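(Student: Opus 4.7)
The plan is direct verification: apply each translation operator $\tau$ to each generator of $\Rel_2$ and show that the result lies in $\Rel_2$. Writing $\tau = c\cdot t_1(k)+t_2(k)$, we have $(c,k)=(-q_0,1),(q,2),(q^2,1)$ in the three respective cases. These choices are engineered so that the weight $c$ in $\tau$ matches the ``transport coefficient'' associated with shifting a degree-$2$ generator in its first coordinate by $k$. Concretely, for each two-element generator $\Rel(a,b)$ of $\Rel_2$, I expect to verify an identity of the shape
\begin{equation*}
    \tau\cdot\Rel(a,b)\;=\;\Rel(a,b+k)+c\cdot\Rel(a+k,b),
\end{equation*}
with all $\mathrm{Sign}$-indices retained, whenever both terms on the right make sense as generators. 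This is checked coefficient-by-coefficient in the monomial basis $\Typee{e_1}{\chi_1}{e_2}{\chi_2}$; the crucial input is that the weight $(-q_0)^{b-a-1}$, $q^{(b-a-1)/2}$, or $q^{2(b-a-1)}$ attached to the ``reversed'' terms in the definition of $\Rel(a,b)$ acquires exactly one extra factor of $c$ under $b\mapsto b+k$, which absorbs the factor $c$ multiplying $t_1(k)$ in $\tau$.

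In cases \Ugen and \SPgen the check splits into the generic identity above for $b\ge a+2$, together with the boundary relation $\Rel(a)$, which is handled by direct computation: using $\Rel(a,a+1)=2\Rel(a)$, one verifies $\tau\Rel(a)=\Rel(a,a+2)$.

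In case \Ogen three families of generators must be checked. For the odd-gap family $\Rel\TypeC{a}{s_1}{b}{s_2}$ and the opposite-sign even-gap family $\Rel\TypeC{a}{s}{b}{-s}$, the identity above applies directly with $k=2$ and $c=q$; the boundary cases $b-a=1$ and $b=a$ are then checked separately and yield $\tau\Rel=2\Rel(\text{shifted})$. For the same-sign family $\Rel\TypeC{a}{s}{b}{s}$, which is constructed from two $\mathrm{RelHalf}$ summands, the natural approach is to first establish the auxiliary identity
\begin{equation*}
    \tau\cdot\mathrm{RelHalf}\TypeC{a}{s}{b}{s}\;=\;\mathrm{RelHalf}\TypeC{a}{s}{b+2}{s}+q\cdot\mathrm{RelHalf}\TypeC{a+2}{s}{b}{s}
\end{equation*}
at the level of $\mathrm{RelHalf}$'s (even though these are not themselves elements of $\Rel_2$), and then to apply it to both $\mathrm{RelHalf}$ summands in the definition of $\Rel\TypeC{a}{s}{b}{s}$; the four resulting terms recombine to give $\Rel\TypeC{a}{s}{b+2}{s}+q\cdot\Rel\TypeC{a+2}{s}{b}{s}$.

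The main obstacle is the bookkeeping in case \Ogen, especially threading the sign $\epsilon$ and the $\pm s$ indices through the computation for the same-sign family. Because $\tau$ acts only on the integer coordinates and fixes the $\mathrm{Sign}$ labels, however, the verification reduces to matching powers of $q$ and signs $\epsilon$ on both sides, which is mechanical once the auxiliary $\mathrm{RelHalf}$ identity is isolated as the key computation.
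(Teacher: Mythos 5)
Your proposal is correct and follows essentially the same route as the paper: one checks directly that the operator sends each degree-two generator to $\Rel(a,b+k)+c\cdot\Rel(a+k,b)$ (or to twice a single shifted generator in the degenerate cases), and in case \Ogen your auxiliary $\mathrm{RelHalf}$ identity holds for the simple reason that $t_1(2)$ and $t_2(2)$ individually translate $\mathrm{RelHalf}\TypeC{a}{s}{b}{s}$ to $\mathrm{RelHalf}\TypeC{a+2}{s}{b}{s}$ and $\mathrm{RelHalf}\TypeC{a}{s}{b+2}{s}$, so the two summands recombine exactly as you say. The only point to add is that the same-sign family has its own boundary case $b=a+2$, where $\Rel\TypeC{a+2}{s}{a+2}{s}$ is undefined and the recombination instead yields $2\,\Rel\TypeC{a}{s}{a+4}{s}$, the same kind of adjustment you already make for the other families.
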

\begin{proof}
\begin{case}{\Ugen}
This is because
\begin{equation*}
\begin{split}
    (-q_0\cdot t_1(1)+t_{2}(1))(\Rel(a))&=\Rel(a,a+2),\\
    (-q_0\cdot t_1(1)+t_{2}(1))(\Rel(a,b))&=(-q_0)\Rel(a+1,b)+\Rel(a,b+1)\quad\text{for } b>a+1.
\end{split}
\end{equation*}
\end{case}
\begin{case}{\Ogen}
This is because, for most $\Rel\TypeC{a}{s_1}{b}{s_2},$ the following expression is valid
\begin{equation*}
    (q\cdot t_1(2)+t_2(2))\Rel\TypeC{a}{s_1}{b}{s_2}=q\Rel\TypeC{a+2}{s_1}{b}{s_2}+\Rel\TypeC{a}{s_1}{b+2}{s_2}.
\end{equation*}
The exceptions are:
\begin{equation*}
\begin{split}
    (q\cdot t_1(2)+t_2(2))\Rel\TypeC{a}{s}{a}{-s}&=2\Rel\TypeC{a}{s}{a+2}{-s},\\
    (q\cdot t_1(2)+t_2(2))\Rel\TypeC{a}{s_1}{a+1}{s_2}&=2\Rel\TypeC{a}{s_1}{a+3}{s_2},\\
    (q\cdot t_1(2)+t_2(2))\Rel\TypeC{a}{s}{a+2}{s}&=2\Rel\TypeC{a}{s}{a+4}{s}.
\end{split}
\end{equation*}
\end{case}
\begin{case}{\SPgen}
This is because
\begin{equation*}
\begin{split}
    (q^2\cdot t_1(1)+t_{2}(1))(\Rel(a))&=\Rel(a,a+2),\\
    (q^2\cdot t_1(1)+t_{2}(1))(\Rel(a,b))&=q^2\Rel(a+1,b)+\Rel(a,b+1)\quad\text{for } b>a+1.\qedhere
\end{split}
\end{equation*}
\end{case}
\end{proof}

\begin{proposition}\label{BergmanProp}
    We have a canonical isomorphism $\mathrm{str}\colon R[\Typ_r]/\Rel_r\rightiso R[\Typ^0_r]$\index{str@$\mathrm{str}$} characterized by the fact that if $\delta(e,\chi)\in\Typ_r$ is with $e_1\ge\cdots\ge e_r,$ then $\mathrm{str}(\delta(e,\chi))=(e^0,\chi^0),$ where $e^0(i)=\lambda_i(e)$ and $\chi^0(i)=\prod_{k\colon e_k=i}\chi_k.$
\end{proposition}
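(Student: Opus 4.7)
The plan is to apply the Bergman diamond lemma to the free associative $R$-algebra $R[\Typ]=\bigoplus_{r\ge 0}R[\Typ_r]$, viewed as the free $R$-algebra on $\Typ_1$ under concatenation, modulo the two-sided ideal $\Rel$ of \Cref{Reldef}. I would orient each generator of $\Rel$ as a rewrite rule whose leading monomial is an unsorted length-$2$ word and whose right-hand side is a combination of either sorted length-$2$ words or unsorted words of strictly smaller gap $b-a$. For example, in case \Ugen, the rule for $\Rel(a,b)$ with $b>a$ reads
\begin{equation*}
\typeC{a}{b}\to\typeC{a+1}{b-1}+(-q_0)^{b-a-1}\typeC{b}{a}-(-q_0)^{b-a-1}\typeC{b-1}{a+1},
\end{equation*}
whose right-hand side contains two sorted pairs and one pair of strictly smaller gap. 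Equipping $\Typ_r$ with a monoid-compatible well-ordering refining the ``unsortedness'' statistic $\sum_{i<j}\max(0,e_j-e_i)$, each rewrite strictly lowers the leading monomial, so the reduction system is terminating and the normal forms are exactly the sorted monomials $(e_1\ge\cdots\ge e_r)$.

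The main obstacle is confluence. Since every leading monomial has length $2$, Bergman's diamond lemma reduces confluence to verifying that all overlap ambiguities resolve: for every length-$3$ subword $\typec{e_1}{e_2}{e_3}$ on which both the $(1,2)$- and the $(2,3)$-rewrites apply, the two resulting reductions must agree modulo $\Rel_3$. In cases \Ugen and \SPgen, this is a reasonably short case analysis on the ordering of $e_1,e_2,e_3$, since $\mathrm{Sign}$ is trivial and the rewrite coefficients depend only on the gaps. In case \Ogen, one has to track the sign characters $s_i$ and the $\epsilon$-twist entering \eqref{concreteRel}, which introduces substantially more subcases; however, each is expected to resolve by a finite algebraic computation using the generators of $\Rel$ together with their consequences \eqref{changeDets} and \eqref{relation1inScase}.

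Granting confluence, the diamond lemma identifies $R[\Typ_r]/\Rel_r$ with the free $R$-module on the normal forms. In cases \Ugen and \SPgen these normal forms biject tautologically with $\Typ^0_r$, since $\mathrm{Sign}$ is trivial and sorted $r$-tuples correspond to functions $\Z\to\Z_{\ge 0}$ summing to $r$. In case \Ogen, the sign identification \eqref{changeDets} shows that among sorted monomials $\delta(e,\chi)$ sharing a common $e$, only the partial products $\prod_{k:e_k=i}\chi_k$ are well-defined modulo $\Rel$, so the normal forms again biject with $\Typ^0_r$ via the formula $(e^0(i),\chi^0(i))=(\lambda_i(e),\prod_{k:e_k=i}\chi_k)$. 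This produces the desired isomorphism $\mathrm{str}$.
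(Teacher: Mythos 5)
Your overall framework coincides with the paper's: apply Bergman's diamond lemma to $R[\Typ]$ with the generators of $\Rel$ from \Cref{Reldef} oriented as rewrite rules, show the overlap ambiguities resolve, and identify the resulting normal forms (sorted words, with signs merged via \eqref{changeDets} in case \Ogen) with $\Typ^0_r$. The termination discussion and the final bijection of normal forms with $\Typ^0_r$ are essentially fine, modulo checking that your ``unsortedness'' ordering is compatible with two-sided multiplication (the contributions of pairs straddling the rewritten positions change under a rewrite) and, as the paper itself notes in a footnote, that the failure of the descending chain condition is harmless because the monomials occurring during straightening are bounded.

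The genuine gap is in the confluence step, which is the entire content of the proof. The overlap ambiguities sit at words $\Typec{a}{s_1}{b}{s_2}{c}{s_3}$ with both adjacent pairs unsorted; even after translating by $\Z$ these form an \emph{infinite} family, parameterized by the gaps $b-a$ and $c-b$. There is no ``case analysis on the ordering of $e_1,e_2,e_3$'' to do -- the ordering is forced (strictly increasing in cases \Ugen and \SPgen, up to sign bookkeeping in case \Ogen) -- and what remains is an infinite two-parameter family of identities whose coefficients (e.g.\ $(-q_0)^{b-a-1}$) and whose very reduction trees depend on the gaps: the number of rewrite steps needed to sort $\typeC{a}{b}$ grows with $b-a$, so the resolution is not a single uniform polynomial identity. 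Asserting that each ambiguity ``is expected to resolve by a finite algebraic computation'' does not address how the infinitude is handled. The paper's solution, which you would need to supply or replace, is to use the translation-intertwining operators of \Cref{preserveshalf} and their degree-$3$ analogues $\varphi_3^{\pm}$ to prove the implications \eqref{Bergman1} and \eqref{Bergman2}, namely that resolvability at $\Typec{a}{s_1}{b}{s_2}{c}{s_3}$ follows from resolvability at the gap-reduced ambiguities $\Typec{a}{s_1}{b}{s_2}{c-\beta}{s_3}$ and $\Typec{a+\beta}{s_1}{b}{s_2}{c}{s_3}$; combined with translation invariance this reduces the check to one base case in cases \Ugen and \SPgen and six in case \Ogen, which are then verified by an explicit (computer-assisted) computation. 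Without this reduction, or a uniform symbolic argument valid for arbitrary gaps, the proof is incomplete.
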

\begin{proof}
This is an application of Bergman's diamond lemma \cite[Theorem 1.2]{Bergman}. While there are a priori infinitely many ambiguities to be checked, we will use \Cref{preserveshalf} to reduce them to a finite number.

In order to apply the lemma, we consider $\TypeCC{a}{s}^\Sigma$ for $a\in\Z$ and $s\in\widehat{\mathrm{Sign}}$ as our set of generators. We endow them with the partial order $\TypeCC{a}{s_1}\le\TypeCC{b}{s_2}$ if either $a<b$ or $(a,s_1)=(b,s_2).$ We extend this lexicographically to a partial order on the monomials.\footnote{Strictly speaking, we must make two remarks: i) We are applying Bergman's diamond lemma with the reverse ordering as the one we defined. ii) Our ordering does not satisfy the ascending chain condition, but for any given monomial in $R[\Typ],$ there is a bound on the size of the monomials that can show up in the straightening process. So the lemma still applies.}

Let
\begin{equation*}
    J_2\defeq\left\{\TypeC{a}{s_1}{b}{s_2}\colon \TypeCC{a}{s_1}\not\ge\TypeCC{b}{s_2}\right\},\quad J_3\defeq\left\{\Typec{a}{s_1}{b}{s_2}{c}{s_3}\colon \TypeC{a}{s_1}{b}{s_2},\TypeC{b}{s_2}{c}{s_3}\in J_2\right\}.
\end{equation*}
We note that we have a set of generators of $\Rel$ indexed by $J_2$ of the form
\begin{equation*}
    r\TypeC{a}{s_1}{b}{s_2}=\TypeC{a}{s_1}{b}{s_2}^\Sigma+\left(\text{lexicographically larger terms}\right).
\end{equation*}

Namely, for $\TypeC{a}{s_1}{b}{s_2}\in J_2$ we consider
\begin{itemize}[leftmargin=*]
    \item $r(a,a+1)=\frac{1}{2}\Rel(a,a+1)=\Rel(a)$ in cases \Ugen and \SPgen,
    \item $r\TypeC{a}{s_1}{a+1}{s_2}=\frac{1}{2}\Rel\TypeC{a}{s_1}{a+1}{s_2}=\TypeC{a}{s_1}{a+1}{s_2}^\Sigma-\TypeC{a+1}{s_2}{a}{s_1}^\Sigma$ in case \Ogen,
    \item $r\TypeC{a}{s}{a+2}{s}=\frac{1}{2}\Rel\TypeC{a}{s}{a+2}{s}=\mathrm{RelHalf}\TypeC{a+1}{s}{a+1}{s}$ in case \Ogen,
    \item $r\TypeC{a}{s}{a}{-s}=\TypeC{a}{s}{a}{-s}^\Sigma$ in case \Ogen.
\end{itemize}
and $r\TypeC{a}{s_1}{b}{s_2}=\Rel\TypeC{a}{s_1}{b}{s_2}$ in the remaining cases. We extend the notation to $r\TypeC{a}{s_1}{b}{s_2}=0$ if $\TypeCC{a}{s_1}\ge\TypeCC{b}{s_2}.$

Denote
\begin{equation*}
    I_2\TypeC{a}{s_1}{b}{s_2}\defeq R\left[r\TypeC{a'}{s_1'}{b'}{s_2'}\colon \TypeC{a'}{s_1'}{b'}{s_2'}>\TypeC{a}{s_1}{b}{s_2}\right].
\end{equation*}
Similarly, denote
\begin{equation*}
\begin{split}
    I_3\Typec{a}{s_1}{b}{s_2}{c}{s_3}\defeq R\left[\TypeCC{a'}{s_1'}^\Sigma\right.\star r\TypeC{b'}{s_2'}{c'}{s_3'},&\quad r\TypeC{a'}{s_1'}{b'}{s_2'}\star\TypeCC{c'}{s_3'}^\Sigma\colon\\
    &\qquad\qquad\left.\vphantom{\TypeCC{a'}{s_1'}^\Sigma}\Typec{a'}{s_1'}{b'}{s_2'}{c'}{s_3'}>\Typec{a}{s_1}{b}{s_2}{c}{s_3}\right].
\end{split}
\end{equation*}
In order to apply Bergman's lemma, we need to check for all $\Typec{a}{s_1}{b}{s_2}{c}{s_3}\in J_3$ that
\begin{equation*}
    D\Typec{a}{s_1}{b}{s_2}{c}{s_3}\defeq\TypeCC{a}{s_1}^\Sigma\star r\TypeC{b}{s_2}{c}{s_3}-r\TypeC{a}{s_1}{b}{s_2}\star\TypeCC{c}{s_3}^\Sigma
\end{equation*}
is in $I_3\Typec{a}{s_1}{b}{s_2}{c}{s_3}.$ We denote this statement as
\begin{equation*}
    P\Typec{a}{s_1}{b}{s_2}{c}{s_3}\colon\quad\left(D\Typec{a}{s_1}{b}{s_2}{c}{s_3}\in I_3\Typec{a}{s_1}{b}{s_2}{c}{s_3}\right).
\end{equation*}

    Denote $\beta\defeq\begin{cases}
        1 & \text{in cases }\Ugen,\SPgen, \\
        2& \text{in case }\Ogen.
    \end{cases}$ We will prove the following implications
    \begin{equation}\label{Bergman1}
        P\Typec{a}{s_1}{b}{s_2}{c-\beta}{s_3}\implies P\Typec{a}{s_1}{b}{s_2}{c}{s_3}\quad\text{for }\Typec{a}{s_1}{b}{s_2}{c-\beta}{s_3}\in J_3,
    \end{equation}
    and
    \begin{equation}\label{Bergman2}
        P\Typec{a+\beta}{s_1}{b}{s_2}{c}{s_3}\implies P\Typec{a}{s_1}{b}{s_2}{c}{s_3}\quad\text{for }\Typec{a+\beta}{s_1}{b}{s_2}{c}{s_3}\in J_3.
    \end{equation}
    Together with the fact that $P\Typec{a}{s_1}{b}{s_2}{c}{s_3}\iff P\Typec{a+k}{s_1}{b+k}{s_2}{c+k}{s_3}$ for any $k\in\Z,$ this would reduce the claim to checking a finite number of cases.

    We consider the following two operators $\varphi_2^+$ and $\varphi_2^-.$
    \begin{equation*}
    \begin{array}{c|c|c}
        &\varphi_2^+&\varphi_2^-\\
        \hline
        \SPgen&q^2\cdot t_1(1)+t_2(1)&q^2\cdot t_2(-1)+t_1(-1)\\
        \hline
        \Ugen&-q_0\cdot t_1(1)+t_2(1)&-q_0\cdot t_2(-1)+t_1(-1)\\
        \hline
        \Ogen&q\cdot t_1(2)+t_2(2)&q\cdot t_2(-2)+t_1(-2)
    \end{array}
    \end{equation*}
    By the (proof of) \Cref{preserveshalf}, we have, for $\TypeC{a}{s_1}{b}{s_2}\in J_2,$ that
    \begin{equation*}
    \begin{split}
        \varphi_2^+\left(r\TypeC{a}{s_1}{b}{s_2}\right)&\equiv r\TypeC{a}{s_1}{b+\beta}{s_2}\mod I_2\TypeC{a}{s_1}{b+\beta}{s_2},\\
        \varphi_2^-\left(r\TypeC{a}{s_1}{b}{s_2}\right)&\equiv r\TypeC{a-\beta}{s_1}{b}{s_2}\mod I_2\TypeC{a-\beta}{s_1}{b}{s_2}.
    \end{split}
    \end{equation*}
    In particular, if we consider the operators $\varphi_3^+$ and $\varphi_3^-$ given by
    \begin{equation*}
    \begin{array}{c|c|c}
        &\varphi_3^+&\varphi_3^-\\
        \hline
        \SPgen&q^4\cdot t_1(1)+q^2\cdot t_2(1)+t_3(1)&q^4\cdot t_3(-1)+q^2\cdot t_2(-1)+t_1(-1)\\
        \hline
        \Ugen&q_0^2\cdot t_1(1)-q_0\cdot t_2(1)+t_3(1)&q_0^2\cdot t_3(-1)-q_0\cdot t_2(-1)+t_1(-1),\\
        \hline
        \Ogen&q^2\cdot t_1(2)+q\cdot t_2(2)+t_3(2)&q^2\cdot t_3(-2)+q\cdot t_2(-2)+t_1(-2)
    \end{array}
    \end{equation*}
    then we have, for $\Typec{a}{s_1}{b}{s_2}{c}{s_3}\in J_3,$ that
    \begin{equation*}
    \begin{split}
        \varphi_3^+\left(D\Typec{a}{s_1}{b}{s_2}{c}{s_3}\right)&\equiv D\Typec{a}{s_1}{b}{s_2}{c+\beta}{s_3}\mod I_3\Typec{a}{s_1}{b}{s_2}{c+\beta}{s_3},\\
        \varphi_3^-\left(D\Typec{a}{s_1}{b}{s_2}{c}{s_3}\right)&\equiv D\Typec{a-\beta}{s_1}{b}{s_2}{c}{s_3}\mod I_3\Typec{a-\beta}{s_1}{b}{s_2}{c}{s_3}.
    \end{split}
    \end{equation*}
    This give us \eqref{Bergman1} and \eqref{Bergman2}.

    In cases \SPgen and \Ugen, we are reduced to consider $P(0,1,2).$ For the case \Ogen, and also exploiting the order-preserving symmetry $\Typec{a}{s_1}{b}{s_2}{c}{s_3}\mapsto \Typec{-c}{s_3}{-b}{s_2}{-a}{s_1},$ we are reduced to consider
    \begin{equation*}
    \begin{split}
       &P\Typec{0}{-s}{0}{s}{0}{-s},\quad P\Typec{0}{-s_1}{0}{s_1}{1}{s_2},\quad P\Typec{0}{-s}{0}{s}{2}{s},\\
       &P\Typec{0}{s_1}{1}{s_2}{2}{s_3},\quad P\Typec{0}{s_1}{1}{s_2}{3}{s_2},\quad P\Typec{0}{s}{2}{s}{4}{s}.
    \end{split}
    \end{equation*}
    This is a finite computation which can be easily verified on a computer.\footnote{See \href{https://github.com/murilocorato/Straightening-relations}{https://github.com/murilocorato/Straightening-relations} for an implementation of this verification on Sage.}
\end{proof}
\begin{remark}\label{basisremark}
Note that, under the above, we have the following basis of $R[\Typ_r^0]$:\footnote{Note that if $e_i=e_j$ but $s_i\neq s_j,$ then we automatically have $\delta_s(e)\in\Rel$ because of \eqref{changeDets}.}
\begin{equation*}
    \left\{\delta_s(e)\colon e\in\Typ_r,\ s\colon\mathrm{Sign}^r\to\{\pm1\}\text{ s.t. }e_1\ge\cdots\ge e_r\text{ and }e_i=e_j\implies s_i=s_j\right\}.
\end{equation*}
 Moreover, this set is orthonormal under $(\#\mathrm{Sign})^{-r}\pair{\cdot}{\cdot}.$
\end{remark}

\begin{definition}\label{Deltadefinition}
Consider $\Delta_r(x)\colon R[\Typ_r]\to R[x][\Typ_r]$ given by
\begin{equation*}
    \Delta_r(x)\defeq\begin{cases}\displaystyle
    \sum_{\varepsilon\in\{0,1\}^r}q^{\inv(\varepsilon)}\cdot x^{\lambda_1(\varepsilon)}\cdot t(2\varepsilon)&\text{in the case \Ugen,}\\
    \displaystyle\sum_{\varepsilon\in\{0,1\}^r}q^{\inv(\varepsilon)}\cdot x^{\lambda_1(\varepsilon)}\cdot t(2\varepsilon)&\text{in the case \Ogen,}\\
    \displaystyle\sum_{\varepsilon\in\{0,1,2\}^r}q^{2\cdot\widetilde{\inv}(\varepsilon)}(q+1)^{\lambda_1(\varepsilon)}q^{\binom{\lambda_1(\varepsilon)}{2}}\cdot x^{\Sigma(\varepsilon)}\cdot t(\varepsilon)&\text{in the case \SPgen.}\end{cases}
\end{equation*}
We also let $\Delta_{k,r}\colon R[\Typ_r]\to R[\Typ_r]$\index{1Deltak,r@$\Delta_{k,r},\ \Delta_r(x)$} be $\Delta_{k,r}\defeq [x^k](\Delta_r(x)).$
\end{definition}
\begin{remark}
For the case \SPgen, we may also write $\Delta_r(x)$ as
\begin{equation*}
    \Delta_r(x)=\sum_{\varepsilon\in\{0,1\}^{2r}}q^{\inv(\varepsilon)}x^{\lambda_1(\varepsilon)}\cdot t(\varepsilon_1+\varepsilon_2,\ldots,\varepsilon_{2r-1}+\varepsilon_{2r}).
\end{equation*}
\end{remark}
\begin{proposition}\label{Deltapreserves}
$\Delta_{k,r}$ preserves $\Rel_r.$
\end{proposition}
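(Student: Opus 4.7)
The plan is to proceed by induction on $r$, using a recursion for $\Delta_r(x)$ obtained by splitting the defining sum according to the last coordinate $\varepsilon_r$. In cases \Ugen{} and \Ogen, the identities $\inv(\varepsilon',0)=\inv(\varepsilon')+|\varepsilon'|$ and $\inv(\varepsilon',1)=\inv(\varepsilon')$ yield
\[
    \Delta_r(x)(\gamma\star\delta) = \Delta_{r-1}(qx)(\gamma)\star\delta + x\,\Delta_{r-1}(x)(\gamma)\star t(2)\delta
\]
for $\gamma\in R[\Typ_{r-1}]$ and $\delta\in R[\Typ_1]$, and a symmetric left recursion comes from splitting on $\varepsilon_1$ (whose inner right-factor operator has $x^k$-coefficient $q^{r-1-k}\Delta_{k,r-1}$, always a polynomial in $q$, so no inverse of $q$ is needed). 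Case \SPgen{} admits analogous $3$-term recursions obtained by splitting on $(\varepsilon_{2r-1},\varepsilon_{2r})$.

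Since $\Rel_r$ is spanned over $R$ by elements $\alpha\star\rho\star\beta$ with $\rho\in\Rel_2$ a degree-$2$ generator and $|\alpha|+2+|\beta|=r$, it suffices to show $\Delta_r(x)(\alpha\star\rho\star\beta)\in\Rel_r[x]$. For $r\ge 3$ at least one of $|\alpha|,|\beta|$ is positive; if $|\beta|\ge 1$, write $\beta=\beta'\star\delta$ and apply the right recursion, so each summand has the form $\Delta_{r-1}(?x)(\alpha\star\rho\star\beta')\star(\text{shift of }\delta)$. The first factor lies in $\Rel_{r-1}[x]$ by the inductive hypothesis, and concatenation with the second factor remains in $\Rel_r[x]$ since $\Rel$ is a two-sided ideal; the case $|\alpha|\ge 1$ is handled symmetrically by the left recursion.

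For the base case $r=2$ I would instead use the variant recursion that splits off the last \emph{two} coordinates; since the outer factor then becomes $\Delta_0=\mathrm{id}$, it yields an explicit expansion
\[
    \Delta_2(x) = 1 + x\,\Phi_1 + x^2\,\Phi_2 + \cdots
\]
where each $\Phi_j$ is a concrete operator on $R[\Typ_2]$. One verifies that every $\Phi_j$ preserves $\Rel_2$: in case \Ogen, $\Phi_1 = qt_1(2)+t_2(2)$ is exactly the operator of \Cref{preserveshalf} and $\Phi_2 = t(2,2)$ is a plain shift; in case \Ugen, the same $\Phi_1$ equals $(-q_0 t_1(1)+t_2(1))^2 + 2q_0\,t(1,1)$, a polynomial in the \Cref{preserveshalf} operator and the (trivially $\Rel_2$-preserving) shift $t(1,1)$; in case \SPgen{} there are four nontrivial $\Phi_j$, each of which factors as a polynomial in $q^2 t_1(1)+t_2(1)$ and $t(1,1)$, for example $\Phi_3 = (q+1)\,t(1,1)\circ(q^2 t_1(1)+t_2(1))$ and $\Phi_2 = (q^2 t_1(1)+t_2(1))^2 + q(q^2+1)\,t(1,1)$.

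The main obstacle is this last identification of the $\Phi_j$'s as polynomials in the \Cref{preserveshalf} operator and $t(1,1)$; it is most delicate in case \SPgen{} because the larger alphabet $\{0,1,2\}^r$ produces four nontrivial $\Phi_j$. Once these identifications are verified, the induction step is immediate from the recursion and the two-sided ideal property of $\Rel$.
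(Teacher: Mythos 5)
Your proof is correct and follows essentially the same route as the paper's: reduce to the degree-two generators via the two-sided ideal structure of $\Rel$ together with a factorization/recursion of $\Delta_r(x)$ (a reduction the paper leaves implicit), and then verify that each coefficient of $\Delta_2(x)$ is a polynomial in the operator of \Cref{preserveshalf} and the shift $t(1,1).$ Your explicit identities for the $\Phi_j$ (e.g.\ $\Phi_2=(q^2\cdot t_1(1)+t_2(1))^2+q(q^2+1)\cdot t(1,1)$ in case \SPgen) check out and agree with, indeed slightly amplify, the two identities recorded in the paper's proof.
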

\begin{proof}
This follows from \Cref{preserveshalf}, the fact that $t(1,1)$ preserves $\Rel_2,$ and the following observations: in case \Ugen,
\begin{equation*}
    q\cdot t_1(2)+t_{2}(2)=(-q_0\cdot t_1(1)+t_2(1))^2+2q_0\cdot t(1,1),
\end{equation*}
and in case \SPgen,
\begin{equation*}
    q^4\cdot t_1(2)+t_{2}(2)=(q^2\cdot t_1(1)+t_2(1))^2-2q^2\cdot t(1,1).\qedhere
\end{equation*}
\end{proof}
\begin{theorem}\label{inducedThm}
The induced map $\mathrm{str}^*\Delta_{k,r}\colon R[\Typ^0_r]\to R[\Typ^0_r]$ agrees with $T_{k,r}^*.$
\end{theorem}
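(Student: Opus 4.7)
The plan is to independently compute both $T_{k,r}^*(e^0,\chi^0)$ and $\mathrm{str}^*\Delta_{k,r}(e^0,\chi^0)$ on each basis element $(e^0,\chi^0) \in \Typ_r^0$ and match them. Fix a lattice $\Lambda$ with $\mathrm{typ}(\Lambda) = (e^0,\chi^0)$ and an orthogonal decomposition $V = \bigoplus_i V_\Lambda^{(i)}$ as in \Cref{mainLemma}. Reading off the matrix in \eqref{Lmatrix} yields $\dim_{\F_q}(\Lambda/L) = \sum_i(m(i) + \cfactor \cdot l(i))$, so sublattices $L$ with $[\Lambda:L] = q^k$ correspond exactly to tuples $(n,m,l,\psi_1,\psi_2)$ satisfying $\sum_i(m(i) + \cfactor\cdot l(i)) = k$; combined with \Cref{mainLemmaCount} this presents $T_{k,r}^*(e^0,\chi^0)$ as a sum over such compatible tuples of weight
\[
q^{\sum_{i>j}(m(i)+\cfactor\cdot l(i))(m(j)+\cfactor\cdot n(j))}\prod_i Q\Typec{n(i)}{\psi_1(i)}{m(i)}{}{l(i)}{\psi_2(i)}
\]
contributing $(f^0,\psi^0)$ via \Cref{mainLemma}.

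On the $\Delta$-side, choose a representative $\delta(e,\chi)$ of $(e^0,\chi^0)$ with $e$ listing the support of $e^0$ in strictly decreasing order of levels (by \Cref{BergmanProp}) and apply the explicit formula of \Cref{Deltadefinition}. Group the $\varepsilon$-sum by the \emph{block profile} $(c_i)_i$, where $c_i$ counts the slots promoted within the level-$i$ block. Summing over placements of $c_i$ ones within a block of size $e^0(i)$, the intra-block portion of $q^{\inv(\varepsilon)}$ collapses to $\qbinom{e^0(i)}{c_i}{q}$ (with the analogue in case \SPgen via the $\{0,1\}^{2r}$ reformulation), while the residual inter-block part equals $\prod_{i>j}q^{c_i(e^0(j)-c_j)}$. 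Applying $\mathrm{str}$ termwise, the intra-block straightening of $e^0(i)-c_i$ slots at level $i$ together with $c_i$ slots at level $i+2$ (and analogous configurations in case \SPgen) is converted, by iterated application of the relations in \Cref{Reldef}, into a combination of $(n(i),m(i),l(i))$-patterns with $n(i)+(2/\cfactor)m(i)+l(i) = e^0(i)$ and $m(i)+l(i) = c_i$; here $m(i)$ records the pairs merged to level $i+1$ via the relation $\Rel(i,i+2)$.

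The main obstacle is a local combinatorial identity at each level: the intra-block $q$-binomial multiplied by the total coefficient produced by the straightening for a given $(n,m,l,\psi_1,\psi_2)$-pattern must equal $Q\Typec{n}{\psi_1}{m}{}{l}{\psi_2}$ from \Cref{mainLemmaFinite}. Under the resulting identifications $c_i = m(i)+l(i)$ and $e^0(j)-c_j = n(j)+m(j)$ (consistent with the formulas for $f^0$ in \Cref{mainLemma}), the inter-block factor $\prod_{i>j}q^{c_i(e^0(j)-c_j)}$ then matches the global exponent on the lattice side. The local identity should reduce to standard $q$-binomial manipulations, provable by induction on $m(i)$ using the explicit form of the relations. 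Case \Ogen is the most delicate because the relations depend on signs (see \eqref{concreteRel}), requiring the $\delta_s$-basis decomposition of \Cref{basisremark} and careful character bookkeeping; a computer verification in the spirit of \Cref{BergmanProp} can corroborate small-case instances as a sanity check.
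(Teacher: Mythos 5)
Your overall architecture is the same as the paper's: both arguments split $q^{\inv(\varepsilon)}$ into an inter-block factor, matched against the exponent $q^{\sum_{i>j}(m(i)+\cfactor\cdot l(i))(m(j)+\cfactor\cdot n(j))}$ from \Cref{mainLemmaCount}, plus intra-block contributions, thereby reducing the theorem to a single-level (constant $e$) statement. That reduction is sound, modulo one point you skip: after promotion, slots from the level-$i$ block land at level $i+2$ and must be merged with slots native to adjacent blocks, and one needs to know that this cross-block straightening is coefficient-free; the paper handles this by observing that only the transposition relation $\TypeC{a}{\psi_1}{a+1}{\psi_2}\equiv\TypeC{a+1}{\psi_2}{a}{\psi_1}$ is ever invoked across blocks.

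The genuine gap is the single-block identity, which is the technical heart of the proof and which you leave as a claim that "should reduce to standard $q$-binomial manipulations." Two problems. First, your proposed factorization is not correct as stated: you cannot collapse the intra-block sum over placements to $\qbinom{e^0(i)}{c_i}{q}$ times a single straightened term, because $\mathrm{str}$ is applied \emph{after} the placement is chosen and different orderings of the same multiset (e.g.\ $(0,2)$ versus $(2,0)$) straighten to genuinely different linear combinations via $\Rel(a,b)$. The quantity you actually need is $\sum_{\varepsilon}q^{\inv(\varepsilon)}\cdot\bigl(\text{coefficient of the }(n,m,l,\psi_1,\psi_2)\text{-pattern in }\mathrm{str}(t(2\varepsilon)\delta)\bigr)$, and no $q$-binomial factors out of it. Second, the identity equating this with $Q\Typec{n}{\psi_1}{m}{}{l}{\psi_2}$ is not a routine $q$-binomial manipulation: in case \Ogen it involves the factors $e(n,\psi_1)$, $e(l,\psi_2)$, $e(r,\chi)$ and a full character bookkeeping, and an induction "on $m(i)$" has no evident recursion to run on. The paper instead proves the single-block case by induction on the block size $r$, peeling off the last slot, which produces an explicit four-term (resp.\ three-term) recursion for $Q$ that is then verified directly; some version of that derivation is unavoidable, and a small-case computer check does not substitute for it. Until this local identity is actually established, the proof is incomplete.
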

\begin{proof}
Given $(e^0,\chi^0)\in\Typ^0_r,$ choose $(e,\chi)\in\Typ_r$ with $\mathrm{str}(\delta(e,\chi))=(e^0,\chi^0).$ We will prove that $\mathrm{str}(\Delta_{k,r}(\delta(e,\chi)))=T_{k,r}^*(e^0,\chi^0)$ in two steps. First, we reduce this to the case $e=(0,\ldots,0).$ Second, we prove this case by induction on $r.$

As in \Cref{mainLemmaCount}, given the data $D=(n,m,l,\psi_1,\psi_2),$ we denote $(f^0(D),\psi^0(D))\in\Typ^0_r$ to be
\begin{equation*}
    f^0(D)(i)\defeq n(i)+\frac{2}{\cfactor}\cdot m(i-1)+l(i-2)\quad\text{and}\quad\psi^0(D)(i)\defeq\psi_1(i)\cdot\epsilon^{m(i-1)}\cdot\psi_2(i-2).
\end{equation*}
Now we want to prove that $\mathrm{str}(\Delta_r(x)(\delta(e,\chi)))$ is equal to
\begin{equation*}
    \sum_{D=(n,m,l,\psi_1,\psi_2)}\left(\prod_iQ_i(D)\right)\cdot(f^0(D),\psi^0(D))
\end{equation*}
where
\begin{equation*}
    Q_i(D)\defeq q^{(m(i)+\cfactor\cdot l(i))\sum_{j>i}(m(j)+\cfactor\cdot l(j))}x^{m(i)+\cfactor\cdot l(i)}Q\Typec{n(i)}{\psi_1(i)}{m(i)}{}{l(i)}{\psi_2(i)}
\end{equation*}
and where the sum is over the data $D$ as in \Cref{mainLemmaCount}, namely $n,m,l\colon\Z\to\Z_{\ge0},$ $\psi_1,\psi_2\colon\Z\to\mathrm{Sign}$ satisfying
\begin{equation*}
    e^0(i)=n(i)+\frac{2}{\cfactor}\cdot m(i)+l(i)\quad\text{and}\quad\chi^0(i)=\psi_1(i)\cdot\epsilon^{m(i)}\cdot\psi_2(i).
\end{equation*}

Since we have $\TypeC{a}{\psi_1}{a+1}{\psi_2}\equiv\TypeC{a+1}{\psi_2}{a}{\psi_1}\mod\Rel,$ (see \eqref{relation1inScase} in the case \Ogen), we have
\begin{equation*}
    (f^0(D),\psi^0(D))=\mathrm{str}\left(\cdots\star\delta^{(i+1)}(D)\star\delta^{(i)}(D)\star\cdots\right)
\end{equation*}
where
\begin{equation*}
    \delta^{(i)}(D)\defeq\Typec{(i+2)^{l(i)}}{\psi_2(i)}{(i+1)^{\frac{2}{\cfactor}\cdot m(i)}}{\epsilon^{m(i)}}{(i)^{n(i)}}{\psi_1(i)}.
\end{equation*}
With this, we can reduce the claim to the cases $e=(i,\cdots,i),$ which then immediately reduces to the case $e=(0,\cdots,0).$ This is possible because: i) for cases \Ugen, \Ogen, we have that if $\varepsilon\in\{0,1\}^r$ is $\varepsilon=(\cdots,\varepsilon^{(i+1)},\varepsilon^{(i)},\cdots),$ then $\inv(\varepsilon)=\sum_{i}\inv(\varepsilon)+\sum_{i>j}\lambda_1(\varepsilon^{(i)})\cdot\lambda_0(\varepsilon^{(j)}).$ Note that $\lambda_1(\varepsilon^{(i)})$ corresponds to $m(i)+l(i),$ while $\lambda_0(\varepsilon^{(i)})$ corresponds to $m(i)+l(i).$ ii) for the case \SPgen, we have that if $\varepsilon\in\{0,1,2\}^r$ is $\varepsilon=(\cdots,\varepsilon^{(i+1)},\varepsilon^{(i)},\cdots),$ then
\begin{equation*}
\begin{split}
    2\cdot\widetilde{\inv}(\varepsilon)+\binom{\lambda_1(\varepsilon)}{2}=&\sum_i\left(2\cdot\widetilde{\inv}(\varepsilon^{(i)})+\binom{\lambda_1(\varepsilon^{(i)})}{2}\right)+\sum_{i>j}\lambda_1(\varepsilon^{(i)})\lambda_1(\varepsilon^{(j)})\\&+\sum_{i>j}4\lambda_2(\varepsilon^{(i)})\lambda_0(\varepsilon^{(j)})+2\lambda_1(\varepsilon^{(i)})\lambda_0(\varepsilon^{(j)})+2\lambda_2(\varepsilon^{(i)})\lambda_1(\varepsilon^{(j)})\\
    =&\sum_i\left(2\cdot\widetilde{\inv}(\varepsilon^{(i)})+\binom{\lambda_1(\varepsilon^{(i)})}{2}\right)+\sum_{i>j}\Sigma(\varepsilon^{(i)})\cdot(2\lvert\varepsilon^{(j)}\rvert-\Sigma(\varepsilon^{(j)})).
\end{split}
\end{equation*}
Note that $\Sigma(\varepsilon^{(i)})$ corresponds to $m(i)+2\cdot l(i),$ while $2\lvert\varepsilon^{(i)}\rvert-\Sigma(\varepsilon^{(i)})$ corresponds to $m(i)+2\cdot n(i).$

Now we prove the claim for $e=(0,\ldots,0)$ by induction on $r.$ If we consider the expression for $\Delta_{r+1}(x)$ and separate the sum over $\varepsilon$ according to $\varepsilon_{r+1},$ we have that $\Delta_{r+1}(x)$ is
\begin{equation*}
\begin{array}{c|c}
    \Ugen&x\Delta_r(x)\star t(2)+\Delta_r(qx)\star t(0)\\
    \hline
    \Ogen&x\Delta_r(x)\star t(2)+\Delta_r(qx)\star t_{r+1}(0)\\
    \hline
    \SPgen&x^2\Delta_r(x)\star t(2)+(q+1)x\Delta_r(qx)\star t(1)+\Delta_r(q^2x)\star t(0)
\end{array}
\end{equation*}
Using the following straightening relations,
\begin{equation*}
\begin{array}{c|c}
    \Ugen&((0)^n,2)\equiv(1-(-q_0)^n)\cdot ((1)^2,(0)^{n-1})+(-q_0)^n\cdot(2,(0)^n)\\
    \hline
    \Ogen&\begin{array}{l}
    \displaystyle\TypeC{(0)^n}{\chi_1}{2}{\chi_2}\equiv(1-(q\epsilon)^{\lfloor n/2\rfloor}\chi_1(-\epsilon\chi_2)^n)\TypeC{(1)^2}{\epsilon}{(0)^{n-1}}{\epsilon\chi_1\chi_2}\\
    \displaystyle\quad\quad+(q\epsilon)^{\lfloor n/2\rfloor}\chi_1(-\epsilon\chi_2)^n
    \left(\frac{1+q\epsilon}{2}\TypeC{2}{\chi_2}{(0)^n}{\chi_1}+\frac{1-q\epsilon}{2}
    \TypeC{2}{(-1)^n\chi_2}{(0)^n}{(-1)^n\chi_1}\right)
    \end{array}\\
    \hline
    \SPgen&((0)^n,2)\equiv(1-q^{2n})\cdot((1)^2,(0)^{n-1})+q^{2n}\cdot(2,(0)^n)
\end{array}
\end{equation*}
the induction hypothesis implies that $\Delta_{r+1}(x)\left(\TypeCC{(0)^r}{\chi}\star\TypeCC{0}{\eta}\right)$ is congruent, modulo $\Rel_{r+1},$ to
\begin{equation*}
\begin{array}{c|c}
    \Ugen&\displaystyle\sum_{n+2m+l=r}Q(n,m,l)x^{m+l}\left(\begin{array}{@{}r@{}c@{}l@{}}&\displaystyle x(1-(-q_0)^n)&\typec{(2)^l}{(1)^{2(m+1)}}{(0)^{n-1}}\\+&\displaystyle x(-q_0)^n&\typec{(2)^{l+1}}{(1)^{2m}}{(0)^n}\\+&\displaystyle q^{m+l}&\typec{(2)^l}{(1)^{2m}}{(0)^{n+1}}\end{array}\right)\\
    \hline
    \Ogen&\begin{array}{l}\displaystyle\sum_{\substack{n+2m+l=r\\\psi_1\epsilon^m\psi_2=\chi}}Q\Typec{n}{\psi_1}{m}{}{l}{\psi_2}x^{m+l}\\
    \qquad\cdot\displaystyle\left(\begin{array}{@{}r@{}c@{}l@{}}
    &\displaystyle x(1-(q\epsilon)^{\lfloor n/2\rfloor}\psi_1(-\epsilon\eta)^n)&\Typec{(2)^l}{\psi_2}{(1)^{2(m+1)}}{\epsilon^{m+1}}{(0)^{n-1}}{\psi_1\epsilon\eta}\\
    +&\displaystyle x(q\epsilon)^{\lfloor n/2\rfloor}\psi_1(-\epsilon\eta)^n\frac{1+q\epsilon}{2}&\Typec{(2)^{l+1}}{\psi_2\eta}{(1)^{2m}}{\epsilon^m}{(0)^n}{\psi_1}\\
    +&\displaystyle x(q\epsilon)^{\lfloor n/2\rfloor}\psi_1(-\epsilon\eta)^n\frac{1-q\epsilon}{2}&\Typec{(2)^{l+1}}{(-1)^n\psi_2\eta}{(1)^{2m}}{\epsilon^m}{(0)^n}{(-1)^n\psi_1}\\
    +&\displaystyle q^{m+l}&\Typec{(2)^l}{\psi_2}{(1)^{2m}}{\epsilon^m}{(0)^{n+1}}{\psi_1\eta}\end{array}\right)\end{array}\\
    \hline
    \SPgen&\displaystyle\sum_{n+m+l=r}Q(n,m,l)x^{m+2l}\left(\begin{array}{@{}r@{}c@{}l@{}}&\displaystyle x^2(1-q^{2n})&\typec{(2)^l}{(1)^{m+2}}{(0)^{n-1}}\\+&\displaystyle x^2q^{2n}&\typec{(2)^{l+1}}{(1)^{m}}{(0)^n}\\+&\displaystyle xq^{m+2l}(q+1)&\typec{(2)^l}{(1)^{m+1}}{(0)^n}\\+&q^{2m+4l}&\typec{(2)^l}{(1)^{m}}{(0)^{n+1}}\end{array}\right)
\end{array}
\end{equation*}
Collecting terms, it remains to check that $Q\Typec{n}{\psi_1}{m}{}{l}{\psi_2}$ agrees with
\begin{equation*}
\begin{array}{c|c}
    \Ugen&\begin{array}{@{}r@{}c@{\,}l@{}}&(1-(-q_0)^{n+1})& Q(n+1,m-1,l)\\+&(-q_0)^n& Q(n,m,l-1)\\+&q_0^{2(m+l)}&Q(n-1,m,l)\end{array}\\
    \hline
    \Ogen&\begin{array}{@{}r@{}c@{\,}l@{}}&(1-(q\epsilon)^{\lfloor (n+1)/2\rfloor}\psi_1\epsilon\eta(-\epsilon\eta)^{n+1})& Q\Typec{n+1}{\psi_1\epsilon\eta}{m-1}{}{l}{\psi_2}\\
    +&(q\epsilon)^{\lfloor n/2\rfloor}\psi_1(-\epsilon\eta)^n\frac{1+q\epsilon}{2}& Q\Typec{n}{\psi_1}{m}{}{l-1}{\psi_2\eta}\\
    +&(q\epsilon)^{\lfloor n/2\rfloor}\psi_1(\epsilon\eta)^n\frac{1-q\epsilon}{2}& Q\Typec{n}{(-1)^n\psi_1}{m}{}{l-1}{(-1)^n\psi_2\eta}\\
    +&q^{m+l}& Q\Typec{n-1}{\psi_1\eta}{m}{}{l}{\psi_2}\end{array}\\
    \hline
    \SPgen&\begin{array}{@{}r@{}c@{\,}l@{}}&(1-q^{2(n+1)})& Q(n+1,m-2,l)\\+&q^{2n}& Q(n,m,l-1)\\
    +&q^{m+2l-1}(q+1)& Q(n,m-1,l)\\+&q^{2m+4l}& Q(n-1,m,l)\end{array}
\end{array}
\end{equation*}
Dividing by $Q\Typec{n}{\psi_1}{m}{}{l}{\psi_2},$ and using that $\frac{\lfloor k/2\rfloor_{q^2}}{e(k,\chi_1)}\left\slash\frac{\lfloor(k-1)/2\rfloor_{q^2}}{e(k-1,\chi_1\chi_2)}\right.=q^{\lfloor k/2\rfloor}-\epsilon^{\lfloor k/2\rfloor}\chi_1(-\chi_2)^k$ for the case \Ogen, we are looking at
\begin{equation*}
\begin{array}{c|c}
    \Ugen&\displaystyle\frac{1}{(-q_0)^{n+2m+l}-1}\cdot\left(\begin{array}{@{}r@{}l@{}}&(q_0^{2m}-1)(-q_0)^l\\+&((-q_0)^l-1)\\+&(-q_0)^{2m+l}((-q_0)^n-1)\end{array}\right)\\
    \hline
    \Ogen&\begin{array}{l}\displaystyle\frac{1}{q^{\lfloor (n+2m+l)/2\rfloor}-\epsilon^{\lfloor(n+l)/2\rfloor}\psi_1\psi_2(-\eta)^{n+l}}\\
    \qquad\cdot\displaystyle\left(\begin{array}{@{}r@{}l@{}}&q^{\lfloor(n+1)l/2\rfloor-\lfloor nl/2\rfloor}\epsilon^{\lfloor(n+1)/2\rfloor}\psi_1(-\epsilon\eta)^n(q^m-1)\\
    +&q^{\lfloor n(l-1)/2\rfloor-\lfloor nl/2\rfloor}(q\epsilon)^{\lfloor n/2\rfloor}\psi_1(-\epsilon\eta)^n\frac{1+q\epsilon}{2}(q^{\lfloor l/2\rfloor}-\epsilon^{\lfloor l/2\rfloor}\psi_2(-\eta)^l)\\
    +&q^{\lfloor n(l-1)/2\rfloor-\lfloor nl/2\rfloor}(q\epsilon)^{\lfloor n/2\rfloor}(-1)^n\psi_1(-\epsilon\eta)^n\frac{1-q\epsilon}{2}(q^{\lfloor l/2\rfloor}-\epsilon^{\lfloor l/2\rfloor}\psi_2(-\eta(-1)^n)^l)\\
    +&q^{\lfloor (n-1)l/2\rfloor-\lfloor nl/2\rfloor}q^{m+l}(q^{\lfloor n/2\rfloor}-\epsilon^{\lfloor n/2\rfloor}\psi_1(-\eta)^n)\end{array}\right)\end{array}\\
    \hline
    \SPgen&\displaystyle\frac{1}{q^{2(n+m+l)}-1}\cdot\left(\begin{array}{@{}r@{}l@{}}-&q^{2l}(q^{m}-1)(q^{m-1}-1)\\+&(q^{2l}-1)\\+&q^{m+2l-1}(q+1)(q^m-1)\\+&q^{2m+2l}(q^{2n}-1)\end{array}\right)
\end{array}
\end{equation*}
which simplify to $1$ after a routine computation.
\end{proof}

\section{Analysis of the Hecke module structure}\label{HeckeStructureSection}
We keep the notation from the previous section. In particular, $R$ is a coefficient ring with $\#\mathrm{Sign}\in R^\times.$
\subsection{Explicit bases}
Let $r,\alpha\ge 1$ be positive integers.
\begin{definition}
We consider the following total preorder. For $e,f\in\Z^r,$ we say $e\prec f$\index{0<@$\prec$} if there exists $1\le i\le r$ such that i) $e_j-\Sigma(e)/r=f_j-\Sigma(f)/r$ for $j<i,$ and ii) $e_i-\Sigma(e)/r<f_i-\Sigma(f)/r.$ We say $e\preccurlyeq f$ if either $e\prec f$ or $e_i-\Sigma(e)/r=f_i-\Sigma(f)/r$ for all $i.$
\end{definition}
Note that $e\preccurlyeq f$ and $f\preccurlyeq e$ if and only if there is $k\in\Z$ with $e=(f_1+k,\ldots,f_r+k).$

We denote
\index{Typ0<e@$\Typ^{0,\prec e}_r,\ \Typ^{0,\preccurlyeq e}_r$}
\begin{equation*}
    \Typ^{0,\preccurlyeq e}_r\defeq\{\delta(f,\chi)\in\Typ^0_r\colon f\preccurlyeq e\},\quad \Typ^{0,\prec e}_r\defeq\{\delta(f,\chi)\in\Typ^0_r\colon f\prec e\}.
\end{equation*}

Suppose that we have commuting operators $S_0,\ldots,S_r\colon\mathcal{S}(X/K,R)\to \mathcal{S}(X/K,R)$ such that their duals are of the form
\begin{equation*}
    \sum_{k=0}^rS_k^*x^k=\sum_{\varepsilon\in\{0,1\}^r}a(\varepsilon)x^{\lambda_1(\varepsilon)}t(\alpha\cdot\varepsilon)
\end{equation*}
for certain coefficients $a(\varepsilon)\in R.$ We assume that $a(0,\ldots,0,\underbrace{1,\ldots,1}_{k})=1$ for all $0\le k\le r.$

\begin{proposition}\label{largestInSk}
Let $f=(f_1,\ldots,f_r)$ with $f_1\ge\cdots\ge f_r.$ Denote
\begin{equation*}
    s_k(f)\defeq f-\alpha\cdot(0,\ldots,0,\underbrace{1,\ldots,1}_{k}).
\end{equation*}
Assume that $\delta_s(f)\not\in\Rel.$ Then we have that
\begin{equation*}
    S_k(\delta_s(f))\equiv\delta_s(s_k(f))\mod R[\Typ^{0,\prec s_k(f)}_r].
\end{equation*}
\end{proposition}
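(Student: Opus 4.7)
The plan is to pass to the adjoint $S_k^*$ and reduce to a careful analysis of the straightening map. By \Cref{basisremark}, the collection of $\delta_{s'}(g)$ over sorted $g$ with $\delta_{s'}(g)\notin\Rel_r$ forms an orthonormal basis of $R[\Typ^0_r]$ under $(\#\mathrm{Sign})^{-r}\pair{\cdot}{\cdot}$. Hence the coefficient of $\delta_{s'}(g)$ in $S_k(\delta_s(f))$ equals
\begin{equation*}
(\#\mathrm{Sign})^{-r}\pair{S_k^*(\delta_{s'}(g))}{\delta_s(f)}=(\#\mathrm{Sign})^{-r}\sum_{\substack{\varepsilon\in\{0,1\}^r\\\lambda_1(\varepsilon)=k}}a(\varepsilon)\pair{\delta_{s'}(g+\alpha\varepsilon)}{\delta_s(f)},
\end{equation*}
and I need to show this is $\delta_{s,s'}$ when $g=s_k(f)$ and vanishes whenever $g\succ s_k(f)$.

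Two observations would drive the argument. First, the straightening map $\mathrm{str}$ is upper triangular in $\prec$: each generator of $\Rel_r$ from \Cref{Reldef} is a sum of monomials of equal $\Sigma$ whose $\prec$-leading term is the sorted transpose, so an induction on inversions yields that $\mathrm{str}(\delta(g',\chi'))$ is supported on $\delta(h,\cdot)$ with $h$ sorted, $\Sigma(h)=\Sigma(g')$, and $h\preccurlyeq\mathrm{sort}(g')$. Second, a combinatorial lemma: the sorted $g$ with $\Sigma(g)=\Sigma(f)-k\alpha$ admitting some $\varepsilon$ of weight $k$ with $\mathrm{sort}(g+\alpha\varepsilon)=f$ are precisely those obtained from $f$ by decreasing $k$ of its entries by $\alpha$ and resorting, and the $\prec$-maximum of these is $s_k(f)$, attained uniquely by $\varepsilon=(0,\ldots,0,1,\ldots,1)$. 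Since $a(\varepsilon)=1$ on the end-ones shift by hypothesis, this yields the correct diagonal contribution $\delta_{s,s'}$ at $g=s_k(f)$, and all other ``direct'' contributions ($\mathrm{sort}(g+\alpha\varepsilon)=f$) land at $g\prec s_k(f)$, which is allowed.

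The main obstacle is that a contribution to $\pair{\delta_{s'}(g+\alpha\varepsilon)}{\delta_s(f)}$ can also arise when $\mathrm{sort}(g+\alpha\varepsilon)\succ f$ strictly, with $\delta_s(f)$ appearing as a subleading term of $\mathrm{str}(\delta_{s'}(g+\alpha\varepsilon))$. For $g\succeq s_k(f)$ outside the diagonal case I plan to rule out such contributions by a direct inspection of \Cref{Reldef}: a single application of a generator at positions $(i,i+1)$ modifies only those two positions, replacing an unsorted pair $(a,b)$ with $a<b$ by a combination of sorted pairs $(c,a+b-c)$ whose first coordinate $c$ lies in $[\lceil(a+b)/2\rceil,b]$ (with extra sign bookkeeping in case \Ogen). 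Letting $i^*$ be the first position where a hypothetical straightened term would need to agree with $f$ while the available values fall short, an interval/$\Sigma$-preservation argument combined with sortedness of $g$ shows the value at $i^*$ cannot reach $f_{i^*}$, forcing vanishing. This range-restriction step is the most delicate and runs uniformly across the three cases \Ugen, \Ogen, and \SPgen.
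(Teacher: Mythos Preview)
Your approach---pass to the adjoint and analyze how straightening interacts with the shifts $t(\alpha\varepsilon)$---is exactly the paper's. The paper packages the ``no contribution from $g\succ s_k(f)$'' part more efficiently via a single \emph{cube} observation: for $e$ sorted, every sorted term in $\mathrm{str}(\delta(e+\alpha\varepsilon))$ lies coordinate-wise in $\prod_i[e_i,e_i+\alpha]$. This is a one-line induction: a local relation at $(j,j+1)$ replaces an unsorted pair $(a,b)$ by pairs with both entries in $[a,b]$, and sortedness of $e$ gives $[a,b]\subseteq[e_j,e_j+\alpha]\cap[e_{j+1},e_{j+1}+\alpha]$. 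Reading this backwards yields $g_i\in[f_i-\alpha,f_i]$ whenever $\delta_s(f)$ appears in $S_k^*(\delta_{s'}(g))$, and the $\prec$-maximal sorted such $g$ with $\Sigma(g)=\Sigma(f)-k\alpha$ is visibly $s_k(f)$. Your route via ``$\mathrm{str}$ is upper-triangular in $\prec$'' is also correct (it follows from majorization), just less direct.

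The ``main obstacle'' you flag---that at $g=s_k(f)$ some $\varepsilon\neq\varepsilon_0:=(0^{r-k},1^k)$ might contribute a subleading $\delta_s(f)$---is real, and the paper's one-line conclusion glosses over it; neither the cube bound nor majorization by $\mathrm{sort}(g+\alpha\varepsilon)$ excludes $f$ here. Your range-restriction sketch points the right way but is not yet a proof. A clean fix, using the same local inspection of \Cref{Reldef} you already invoke: each generating relation at positions $(j,j+1)$ replaces the unsorted pair $(a,b)$ by pairs whose \emph{first} entry is strictly $>a$ (check each generator; in case \Ogen the element $r\TypeC{a}{s}{a}{-s}$ simply kills its monomial). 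Hence the $j$-th prefix sum $\sum_{i\le j}$ is nondecreasing along any reduction, so every $h\in\mathrm{supp}\bigl(\mathrm{str}(\delta(e+\alpha\varepsilon))\bigr)$ satisfies $\sum_{i\le j}h_i\ge\sum_{i\le j}(e_i+\alpha\varepsilon_i)$ for all $j$. Taking $h=f$, $e=s_k(f)$ and $j=r-k$ forces $\sum_{i\le r-k}\varepsilon_i\le 0$, hence $\varepsilon=\varepsilon_0$, and the diagonal coefficient is $a(\varepsilon_0)\cdot\delta_{s,s'}=\delta_{s,s'}$ as required.
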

\begin{proof}
Consider $\delta_s(e)\in\mathrm{Supp}(S_k(\delta_{s'}(f))).$ This is the same as having $\delta_{s'}(f)\in\mathrm{Supp}(S_k^*(\delta_s(e))).$ In the expression for $S_k^*$ above, the terms $\delta_s(e+\alpha\cdot\varepsilon)$ get straightened to terms that still lie inside the cube $[e_1,e_1+\alpha]\times\cdots\times[e_r,e_r+\alpha].$ We also have $\Sigma(f)=\Sigma(e)+\alpha\cdot k.$ Switching this around, we get that
\begin{equation*}
    e\in[f_1-\alpha,f_1]\times\cdots[f_r-\alpha,f_r],\quad\Sigma(e)=\Sigma(f)-\alpha\cdot k.
\end{equation*}
Now it is easy to see that the point $e$ satisfying the above which is maximal with respect to $\preccurlyeq$ is simply $e=s_k(f).$ The claim now follows from the assumption that $a(0,\ldots,0,\underbrace{1,\ldots,1}_{k})=1.$
\end{proof}

\begin{theorem}\label{basisTheorem}
Consider the set
\begin{equation*}
    \mathcal{B}\defeq \left\{\delta_s(e)\colon s\colon\mathrm{Sign}^r\to\{\pm1\},\ (e_{i}-e_{i+1}-\left(1\text{ if }s_i\neq s_{i+1}\right))\in\left[0,\alpha-1\right],\ 0\le e_r<\alpha\right\}.
\end{equation*}
Under this basis, $\mathcal{S}(X/K,R)$ is a free $R[S_1,\ldots,S_r,S_r^{-1}]$-module.
\end{theorem}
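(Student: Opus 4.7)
The plan is to exhibit an explicit bijection $\Phi\colon\Typ^0_r\to\mathcal{B}\times(\Z_{\ge0}^{r-1}\times\Z)$ compatible with the leading-term description of \Cref{largestInSk}, and derive both spanning and freeness from it. Given a valid basis element $\delta_s(f)\in\Typ^0_r$ (so $f_1\ge\cdots\ge f_r$ with $f_i=f_j\implies s_i=s_j$), I would define $\Phi(\delta_s(f))=(b,(a_1,\ldots,a_r))$ by a Euclidean-division scheme: with $\delta_i=1$ if $s_i\neq s_{i+1}$ and $0$ otherwise, set
\begin{equation*}
a_{r-i}\defeq\left\lfloor\tfrac{f_i-f_{i+1}-\delta_i}{\alpha}\right\rfloor\ge 0\qquad(i=1,\ldots,r-1),
\end{equation*}
and pick $a_r\in\Z$ so that $e(b)_r\defeq f_r+\alpha(a_1+\cdots+a_r)\in[0,\alpha-1]$; then set $e(b)_i\defeq f_i+\alpha(a_{r-i+1}+\cdots+a_r)$. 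A direct check shows that $b=\delta_s(e(b))\in\mathcal{B}$ (the monotonicity and the sign-consistency for equal entries are automatic, using that $f_i>f_{i+1}$ whenever $s_i\neq s_{i+1}$), that $a_1,\ldots,a_{r-1}\ge 0$, and that $\Phi$ is a bijection. Moreover, $\Phi^{-1}(b,\vec a)$ is exactly the leading term of $S_1^{a_1}\cdots S_r^{a_r}\cdot b$ obtained by iterating \Cref{largestInSk}.

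For spanning, the crucial observation is that each $S_k^*$ shifts the total sum $\Sigma(f)$ uniformly by $+\alpha k$, so every $S_k$ preserves the decomposition of $R[\Typ^0_r]$ by level sets of $\Sigma$. Restricted to a fixed level $\Sigma=C$, the preorder $\prec$ becomes a strict partial order (since $f\sim f'$ with equal sums forces $f=f'$) that coincides with lexicographic order on $(f_1,\ldots,f_r)$, and is well-founded because $f_1$ is bounded below by $\lceil C/r\rceil$, $f_2$ by $\lceil(C-f_1)/(r-1)\rceil$, and so on. I would induct on $\prec$ within each level: if $\delta_s(f)\in\mathcal{B}$ we are done; otherwise $\Phi$ yields $(b,\vec a)$ with $\vec a\neq 0$, and iterating \Cref{largestInSk} gives $S^{\vec a}b=\delta_s(f)+(\text{strictly }\prec\text{-lower terms})$ at the same $\Sigma$-level, which by induction lie in the $R[S_1,\ldots,S_r,S_r^{-1}]$-span of $\mathcal{B}$.

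For linear independence, suppose $\sum_{i,\vec a}c_{i,\vec a}S^{\vec a}b_i=0$ is a nontrivial finite relation; since each $S^{\vec a}b_i$ is supported in a single $\Sigma$-level, we may restrict to one such level $C$. The set of leading terms $\{\Phi^{-1}(b_i,\vec a):c_{i,\vec a}\neq 0\}$ is finite and pairwise distinct by injectivity of $\Phi$. Pick a $\prec$-maximal leading term $\delta_{s^*}(f^*)=\Phi^{-1}(b_{i_0},\vec a_0)$; by \Cref{largestInSk} every other $S^{\vec a}b_i$ would contribute to the coefficient of $\delta_{s^*}(f^*)$ only via a strictly $\prec$-lower correction of its own leading term, which maximality rules out. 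Hence that coefficient equals $c_{i_0,\vec a_0}$, forcing it to vanish; iterating kills all coefficients. The main obstacle throughout is that $\prec$ on $\Typ^0_r$ is only a preorder with infinite equivalence classes under shifts by $(k,\ldots,k)$, so a naive induction on $\prec$ fails; the decisive trick is to fiber over the level sets of $\Sigma$, where $\prec$ becomes a well-founded strict order.
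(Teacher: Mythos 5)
Your argument is correct and is essentially the paper's own proof: both rest on the unitriangularity of $S_1^{a_1}\cdots S_r^{a_r}$ with respect to $\prec$ furnished by \Cref{largestInSk}, together with the Euclidean-division bijection between nonzero $\delta_s(f)$ and pairs $(b,\vec a)\in\mathcal{B}\times(\Z_{\ge0}^{r-1}\times\Z)$. The paper states this more tersely, leaving implicit the two points you spell out — the explicit formula for the bijection and the well-foundedness of $\prec$ obtained by restricting to a fixed $\Sigma$-level, where it becomes lexicographic — so your write-up is a legitimate fleshing-out of the same proof rather than a different one.
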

\begin{proof}
By \Cref{largestInSk}, for any $a_1,\ldots,a_{r-1}\ge0$ and $a_r\in\Z,$ we have
\begin{equation*}
    S_1^{a_1}\cdots S_{r-1}^{a_{r-1}}S_r^{a_r}(\delta_s(f))\equiv \delta_s(e)\mod\Z[\Typ^{0,\prec e}_r]
\end{equation*}
where $e_i=f_i-\alpha\cdot(a_r+\cdots+a_{r-i+1}).$ Conversely, given $e\in\Typ^0$ and $\delta_s(e)$ nonzero, there is a unique $\delta_s(f)\in\mathcal{B}$ and integers $a_1,\ldots,a_r$ as above with $S_1^{a_1}\cdots S_{r-1}^{a_{r-1}}S_r^{a_r}(\delta_s(f))\equiv\delta_s(e)\mod\Z[\Typ^{\prec e}].$ Together with \Cref{basisremark}, this implies that $\mathcal{B}$ is a basis of $\mathcal{S}(X/K,R)$ as a $\mathcal{H}(G,K)$-module.
\end{proof}

\subsection{Case \texorpdfstring{\Ugen}{(uH)}}
Assume we are in the case \Ugen for this subsection. We will give another treatment of \cite[Theorem 2]{HironakaHerm}.
\begin{definition}
Consider $\Delta^{1/2}_r(x)\colon R[\Typ_r]\to R[x][\Typ_r]$ given by
\begin{equation*}
    \Delta^{1/2}_r(x)\defeq\sum_{\varepsilon\in\{0,1\}^r}(-q_0)^{\inv(\varepsilon)}x^{\lambda_1(\varepsilon)}t(\varepsilon)
\end{equation*}
We also let $\Delta^{1/2}_{k,r}\colon R[\Typ_r]\to R[\Typ_r]$ be $\Delta^{1/2}_{k,r}\defeq [x^k](\Delta^{1/2}_r(x)).$
\end{definition}
\begin{proposition}
$\Delta^{1/2}_{k,r}$ preserves $\mathrm{Gr}_r(\Rel).$
\end{proposition}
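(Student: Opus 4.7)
The plan is to mimic the strategy behind \Cref{Deltapreserves}, but more directly: since the shifts in $\Delta^{1/2}_r(x)$ are of size $1$ rather than $2$, no squaring identity is needed -- the preservation will follow in one stroke from \Cref{preserveshalf}. Since $\Rel_r$ is the degree-$r$ piece of a two-sided ideal generated by degree-$2$ elements, it suffices to show that for every generator $\rho \in \Rel_2$ (i.e.\ some $\Rel(a)$ or $\Rel(a,b)$) and every $A \in R[\Typ_a]$, $B \in R[\Typ_b]$ with $a+2+b=r$, we have $\Delta^{1/2}_r(x)(A \star \rho \star B) \in \Rel_r[x]$; taking the coefficient of $x^k$ then gives the claim.

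The key tool is a decomposition of the sum defining $\Delta^{1/2}_r(x)$ according to the values of $\varepsilon$ at the two positions occupied by $\rho$. Writing $\varepsilon = (\varepsilon_A, \varepsilon_\rho, \varepsilon_B)$ with $\varepsilon_\rho \in \{0,1\}^2$, the elementary identity
\begin{equation*}
\inv(\varepsilon) = \inv(\varepsilon_A) + \inv(\varepsilon_\rho) + \inv(\varepsilon_B) + \lambda_1(\varepsilon_A)\lambda_0(\varepsilon_\rho) + \lambda_1(\varepsilon_A)\lambda_0(\varepsilon_B) + \lambda_1(\varepsilon_\rho)\lambda_0(\varepsilon_B)
\end{equation*}
lets us group the sum by $\varepsilon_\rho$. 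Since $t(\varepsilon_A)$, $t(\varepsilon_\rho)$, $t(\varepsilon_B)$ act on $A$, $\rho$, $B$ independently, each group yields a contribution of the form (scalar)$\cdot\bigl(\sum t(\varepsilon_A)(A)\bigr) \star M(\varepsilon_\rho) \star \bigl(\sum t(\varepsilon_B)(B)\bigr)$, where $M(\varepsilon_\rho)$ depends only on $\varepsilon_\rho$.

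I would then check that $M(\varepsilon_\rho) \in \Rel_2$ in each of the four groups. For $\varepsilon_\rho = (0,0)$ and $\varepsilon_\rho = (1,1)$, this is immediate: the middle factor is $\rho$ and $t(1,1)\rho$, respectively, which both lie in $\Rel_2$ (the latter because $t(1,1)$ manifestly preserves $\Rel_2$). For $\varepsilon_\rho \in \{(1,0),(0,1)\}$, the outer coefficient as a function of $(\varepsilon_A,\varepsilon_B)$ is identical in the two cases, since $\lambda_0(\varepsilon_\rho) = \lambda_1(\varepsilon_\rho) = 1$ in both; their combined contribution therefore pulls out a factor $x$ times
\begin{equation*}
(-q_0) \cdot t(1,0)\rho + t(0,1)\rho = \bigl(-q_0\cdot t_1(1) + t_2(1)\bigr)(\rho),
\end{equation*}
which lies in $\Rel_2$ by \Cref{preserveshalf}. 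Summing the four groups, $\Delta^{1/2}_r(x)(A \star \rho \star B)$ is a sum of elements of $R[\Typ_a] \star \Rel_2 \star R[\Typ_b] \subseteq \Rel_r$, concluding the proof.

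The argument is essentially bookkeeping, so I do not anticipate a real obstacle; the only thing to verify carefully is the $\inv$ decomposition identity (immediate from the definition) and the matching of outer coefficients in cases $(1,0)$ and $(0,1)$, which is what makes the \Cref{preserveshalf} relation intervene at precisely the right spot.
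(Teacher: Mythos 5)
Your proof is correct and is exactly the unpacking of the paper's one-line proof, which also derives the claim directly from \Cref{preserveshalf} (the combination $-q_0\cdot t_1(1)+t_2(1)$ emerging from the $\varepsilon_\rho\in\{(1,0),(0,1)\}$ terms, together with the trivial $t(1,1)$-invariance of $\Rel_2$, is precisely what ``follows at once'' means there). The same bookkeeping handles the identically worded proposition in case \SPgen with $q^2\cdot t_1(1)+t_2(1)$ in place of $-q_0\cdot t_1(1)+t_2(1)$.
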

\begin{proof}
This follows at once from \Cref{preserveshalf}.
\end{proof}
\begin{corollary}
    Let $S_{k,r}$ denote the adjoints of $\Delta_{k,r}^{1/2}.$ Then $\mathcal{S}(X/K,R)$ is free of rank one as a $R[S_{1,r},\ldots,S_{r,r},S_{r,r}^{-1}]$-module, generated by $\delta(0,\ldots,0).$
\end{corollary}
\begin{proof}
    This follows at once from \Cref{basisTheorem} and the above.
\end{proof}
Note that this equips $\mathcal{S}(X/K,R)$ with an $R$-algebra structure.
\begin{theorem}[Hironaka's spherical transform]\label{RSat-uH}
    Suppose $R$ contains a square root of $-q_0,$ denoted by $\sqrt{-q_0}.$ Let $\mu_1,\ldots,\mu_r,\nu_1,\ldots,\nu_r$ be indeterminates. Then we have a commutative diagram of $R$-algebras
    \begin{equation*}
        \begin{tikzcd}
            \mathcal{H}(G,K)\arrow[r,"\mathrm{Sat}"]\arrow[d,hook]&R[\mu_1^{\pm1},\ldots,\mu_r^{\pm1}]^{\mathrm{sym}}\arrow[d,hook]\\
            \mathrm{S}(X/K,R)\arrow[r,"\mathrm{Sat}^{1/2}"]&R[\nu_1^{\pm1},\ldots,\nu_r^{\pm1}]^{\mathrm{sym}}
        \end{tikzcd}
    \end{equation*}
    where $\mathrm{Sat}$ denotes the Satake transform $\mathrm{Sat}(T_{k,r})=(-q_0)^{k(r-k)}\sigma_k(\mu_1,\ldots,\mu_r),$ $\mathrm{Sat}^{1/2}$ is defined by $\mathrm{Sat}^{1/2}(S_{k,r})\defeq \sqrt{-q_0}^{k(r-k)}\sigma_k(\nu_1,\ldots,\nu_r),$ and the rightmost map is induced by $\mu_i\mapsto\nu_i^2$ for $1\le i\le r.$
\end{theorem}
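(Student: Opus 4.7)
The plan is to first observe that $\mathrm{Sat}^{1/2}$ is well-defined as a ring homomorphism, then reduce the commutativity of the diagram to a check on each Hecke generator $T_{k,r}$, and finally verify these equalities. For the first step, the preceding corollary presents $\mathcal{S}(X/K,R)$ as a free module of rank one over the polynomial ring $R[S_{1,r},\ldots,S_{r-1,r},S_{r,r}^{\pm 1}]$, generated by $\delta(0,\ldots,0)$. As an $R$-algebra, $\mathcal{S}(X/K,R)$ is therefore itself a polynomial ring in the $S_{k,r}$'s (with $S_{r,r}$ inverted), so $\mathrm{Sat}^{1/2}$ is defined by prescribing the images of these free polynomial generators.

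Both compositions in the diagram are $R$-algebra homomorphisms $\mathcal{H}(G,K)\to R[\nu_1^{\pm 1},\ldots,\nu_r^{\pm 1}]^{\mathrm{sym}}$. Via the classical Satake isomorphism, $\mathcal{H}(G,K)$ is a polynomial ring generated by $T_{1,r},\ldots,T_{r-1,r},T_{r,r}^{\pm 1}$, so the diagram commutes once we verify that the image of $T_{k,r}\cdot\delta(0,\ldots,0)$ under $\mathrm{Sat}^{1/2}$ is $(-q_0)^{k(r-k)}\sigma_k(\nu_1^2,\ldots,\nu_r^2)$ for each $1\le k\le r$. The case $k=r$ is immediate: \Cref{largestInSk} applied with $\alpha=2$ (for $T_{r,r}$) and with $\alpha=1$ (for $S_{r,r}$) shows that both $T_{r,r}$ and $S_{r,r}^2$ carry $\delta(0,\ldots,0)$ to $\delta(-2,\ldots,-2)$, and a short check with the $\preccurlyeq$-ordering shows no type is strictly $\prec$-smaller than $(-2,\ldots,-2)$, so these equalities hold with no lower-order corrections. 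Hence $T_{r,r}\cdot\delta(0,\ldots,0)=S_{r,r}^2\cdot\delta(0,\ldots,0)$, whose image under $\mathrm{Sat}^{1/2}$ is $(\nu_1\cdots\nu_r)^2=\sigma_r(\nu_1^2,\ldots,\nu_r^2)$ as required.

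For $1\le k<r$, one computes $T_{k,r}\cdot\delta(0,\ldots,0)$ explicitly via \Cref{mainLemmaCount} applied to $\Lambda_0$ of type $(0^r)$; the result is a sum indexed by triples $(n,m,l)$ with $n+2m+l=r$ and $m+l=k$, whose coefficients are the $(\mathrm{uH})$-case values $Q(n,m,l)$ from \Cref{mainLemmaFinite} and whose type summands are concentrated at positions $\{0,1,2\}$. Each such type is then rewritten as an $S_{j,r}$-monomial applied to $\delta(0,\ldots,0)$ by inductive use of \Cref{largestInSk}: one identifies the leading $S$-monomial, subtracts it, and recurses on strictly $\prec$-smaller terms. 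The main obstacle is the resulting combinatorial identity: after applying $\mathrm{Sat}^{1/2}$ and collecting terms, the sum of products of $(-q_0)$-binomials with elementary symmetric polynomials in $\nu$ must simplify to $(-q_0)^{k(r-k)}\sigma_k(\nu^2)$. A clean way to verify this is to repackage the claim as a generating-function identity for $\sum_k T_{k,r}\,x^k$ that parallels the structural operators of \Cref{preserveshalf}, or alternatively to check it on sufficiently many specializations of $\nu$.
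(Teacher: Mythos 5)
Your overall framing is sound: $\mathrm{Sat}^{1/2}$ is well-defined because $\mathcal{S}(X/K,R)$ is the free rank-one module over $R[S_{1,r},\ldots,S_{r-1,r},S_{r,r}^{\pm1}]$ generated by $\delta(0,\ldots,0),$ and commutativity of the diagram does reduce to the identities $\mathrm{Sat}^{1/2}(T_{k,r}\cdot\delta(0,\ldots,0))=(-q_0)^{k(r-k)}\sigma_k(\nu_1^2,\ldots,\nu_r^2).$ Your $k=r$ case is also fine, since $\Delta_{r,r}=t(2,\ldots,2)$ and $\Delta^{1/2}_{r,r}=t(1,\ldots,1)$ exactly. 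But for $1\le k<r$ you have only restated the problem: the ``resulting combinatorial identity'' you isolate is the entire content of the theorem, and neither of the two strategies you offer is carried out. Checking ``sufficiently many specializations of $\nu$'' is not an argument as written (you would at least need to bound degrees and produce the specializations, and over a general ring $R$ this is delicate), and ``repackage the claim as a generating-function identity'' is precisely the step that remains to be done. Moreover, the route you sketch --- compute $T_{k,r}\cdot\delta(0,\ldots,0)$ from \Cref{mainLemmaCount}, then peel off leading $S$-monomials and recurse on $\prec$-smaller terms --- forces you to track all the lower-order straightening corrections, which \Cref{largestInSk} does not control explicitly; that recursion is exactly the bookkeeping the paper's argument is designed to avoid.

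The paper closes this gap by never descending to $R[\Typ^0_r]$ at all. Both $\Delta_{k,r}$ and $\Delta^{1/2}_{k,r}$ are by construction polynomials in the commuting translation operators $t_i$ on $R[\Typ_r],$ so one forms
\begin{equation*}
    A_r(x)=\sum_{k=0}^r(-q_0)^{-k(r-k)}\Delta_{k,r}(-x)^k,\qquad B_r(x)=\sum_{k=0}^r\sqrt{-q_0}^{\,-k(r-k)}\Delta^{1/2}_{k,r}(-x)^k,
\end{equation*}
whose transforms are $\prod_i(1-x\mu_i)$ and $\prod_i(1-x\nu_i)$; the theorem is then equivalent to the operator identity $A_r(x^2)=B_r(x)B_r(-x),$ which is proved by a one-line induction on $r$ using the recursions $A_{r+1}(x)=A_r(-xq_0)\bigl((-q_0)^{-r}(-x)t_{r+1}(2)+t_{r+1}(0)\bigr)$ and $B_{r+1}(x)=B_r(x\sqrt{-q_0})\bigl(\sqrt{-q_0}^{\,-r}(-x)t_{r+1}(1)+t_{r+1}(0)\bigr),$ the induction step being a two-term factorization in the single variable $t_{r+1}.$ If you want to complete your proposal, the cleanest fix is to adopt exactly this: prove the factorization of generating functions of the operators upstairs, which handles all $k$ at once and disposes of the straightening bookkeeping you were deferring.
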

\begin{proof}
Consider
\begin{equation*}
    A_r(x)\defeq\sum_{k=0}^r(-q_0)^{-k(r-k)}\Delta_{k,r}\cdot (-x)^k,\quad B_r(x)\defeq\sum_{k=0}^r\sqrt{-q_0}^{-k(r-k)}\Delta^{1/2}_{k,r}\cdot (-x)^k.
\end{equation*}
This is so that $\mathrm{Sat}(A_r(x))=\prod_{i=1}^r(1-x\mu_i)$ and $\mathrm{Sat}^{1/2}(B_r(x))=\prod_{i=1}^r(1-x\nu_i).$ So it remains to see that $A_r(x^2)\isequal B_r(x)B_r(-x).$

We have
\begin{equation*}
    A_{r+1}(x)=A_r(-xq_0)\star\left((-q_0)^{-r}(-x)t_{r+1}(2)+t_{r+1}(0)\right)
\end{equation*}
as well as
\begin{equation*}
    B_{r+1}(x)=B_r(x\sqrt{-q_0})\star\left(\sqrt{-q_0}^{-r}(-x)t_{r+1}(1)+t_{r+1}(0)\right).
\end{equation*}
Now the claim follows by induction on $r$ as
\begin{equation*}
    \left(\sqrt{-q_0}^{-r}x\cdot t_{r+1}(1)+t_{r+1}(0)\right)\left(-\sqrt{-q_0}^{-r}x\cdot t_{r+1}(1)+t_{r+1}(0)\right)=-(-q_0)^{-r}x\cdot t_{r+1}(2)+t_{r+1}(0).\qedhere
\end{equation*}
\end{proof}
\begin{remark}\label{uHsphremark}
It is not a priori clear how the above map $\mathrm{Sat}^{1/2}$ in the case $R=\C$ is related to Hironaka's spherical transform of \cite[Theorem 2]{HironakaHerm}. In the case $r=2,$ we can explicitly see by the proof of \cite[\S1, Theorem 4]{Hironaka2} that, up to normalization, Hironaka's transform $F\colon\mathcal{S}(X/K,\C)\rightiso\C[q_0^{\pm z_1},q_0^{\pm z_2}]$ agrees with $\mathrm{Sat}^{1/2}$ under the identification $\nu_i\mapsto \sqrt{-1}\cdot q_0^{z_i}.$\footnote{We note that our $q_0$ corresponds to ``$q$'' in Hironaka's notation. Moreover, our usage of $z_i$ above is following the notation in \cite{HironakaHerm}. We note, however, that in \cite{Hironaka2} the ``$z_i$'' denote half of the previous ones.} In general, it should be possible to prove that they agree by reproving the inverse Satake transform \Cref{InvSat} from Hironaka's work.
\end{remark}

\subsection{Case \texorpdfstring{\Ogen}{(S)}}
Assume we are in the case \Ogen for this subsection. The following confirms a conjecture of Hironaka from \cite{Hironaka2}.
\begin{corollary}\label{explicitBasis}
$\mathcal{S}(X/K,R)$ is a free $\mathcal{H}(G,K)$-module of rank $4^r,$ with basis
\begin{equation*}
    \mathcal{B}\defeq \left\{\delta_s(e)\colon s\colon\mathrm{Sign}^r\to\{\pm1\},\ (e_{i}-e_{i+1}-\left(1\text{ if }s_i\neq s_{i+1}\right))\in\{0,1\},\ e_r\in\{0,1\}\right\}.
\end{equation*}
\end{corollary}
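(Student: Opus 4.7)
The plan is to apply \Cref{basisTheorem} with $\alpha=2$ and with the commuting operators $S_0,\ldots,S_r$ taken to be the minuscule Hecke operators $T_{0,r},\ldots,T_{r,r}$, identifying $\mathcal{S}(X/K,R) \cong R[\Typ^0_r]$ throughout.

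First I would verify the hypotheses of \Cref{basisTheorem} in case \Ogen. By \Cref{Deltadefinition}, we have
\begin{equation*}
\sum_{k=0}^r \Delta_{k,r}\, x^k \;=\; \Delta_r(x) \;=\; \sum_{\varepsilon \in \{0,1\}^r} q^{\inv(\varepsilon)}\, x^{\lambda_1(\varepsilon)}\, t(2\varepsilon),
\end{equation*}
which is exactly of the shape required by \Cref{basisTheorem} with $\alpha=2$ and coefficients $a(\varepsilon) = q^{\inv(\varepsilon)}$. The normalization $a(0,\ldots,0,1,\ldots,1)=1$ holds automatically because $\inv$ vanishes on weakly-increasing binary sequences. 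By \Cref{inducedThm}, $\mathrm{str}^*\Delta_{k,r} = T_{k,r}^*$, so the $T_{k,r}$ meet the required dual form, and their commutativity follows from $\mathcal{H}(G,K)$ being commutative.

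Substituting $\alpha = 2$ into \Cref{basisTheorem} converts the constraint ``$(e_i-e_{i+1}-(1\text{ if }s_i\neq s_{i+1}))\in[0,\alpha-1]$ and $0\le e_r<\alpha$'' into exactly the conditions defining $\mathcal{B}$, yielding freeness of $\mathcal{S}(X/K,R)$ over $R[T_{1,r},\ldots,T_{r,r},T_{r,r}^{-1}]$ with basis $\mathcal{B}$. To promote this to freeness over the full Hecke algebra $\mathcal{H}(G,K)$, I would invoke the classical fact that, for $G=\mathrm{GL}_r$, the minuscule Hecke operators together with $T_{r,r}^{-1}$ already generate $\mathcal{H}(G,K)$: the Satake isomorphism sends $T_{k,r}$ to a unit multiple of the elementary symmetric function $\sigma_k(\mu_1,\ldots,\mu_r)$, and by the fundamental theorem of symmetric polynomials these, together with $\sigma_r^{-1}$, generate $R[\mu_1^{\pm 1},\ldots,\mu_r^{\pm 1}]^{S_r}$.

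A straightforward count gives $\#\mathcal{B} = 4^r$: the sign datum $s$ contributes $|\mathrm{Sign}|^r = 2^r$ choices, $e_r$ contributes $2$ choices from $\{0,1\}$, and each of $e_1,\ldots,e_{r-1}$ contributes $2$ choices given $e_{i+1}$ and $s$, for a grand total of $2^r \cdot 2 \cdot 2^{r-1} = 4^r$. I do not anticipate a genuine obstacle: the corollary is a direct specialization of \Cref{basisTheorem} to case \Ogen with $\alpha=2$, with all the heavy lifting already carried out in \Cref{Deltadefinition} and \Cref{inducedThm}.
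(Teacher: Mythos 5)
Your proposal is correct and takes essentially the same route as the paper: the paper's proof simply invokes \Cref{basisTheorem} with $S_k=T_{k,r}$ and $\alpha=2$ and then counts $\#\mathcal{B}=2^r\cdot 2^r=4^r$. The hypothesis verifications and the identification $\mathcal{H}(G,K)\cong R[T_{1,r},\ldots,T_{r,r},T_{r,r}^{-1}]$ via Satake that you spell out are left implicit in the paper (the latter is already built into the conclusion of \Cref{basisTheorem}), so your write-up is just a more detailed version of the same argument.
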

\begin{proof}
That $\mathcal{B}$ is a basis follows by \Cref{explicitBasis} by taking $S_k=T_{k,r}$ and $\alpha=2.$ For each $s\in\mathrm{Sign}^r\to\{\pm1\},$ there are $2^r$ possibilities for $e$ such that $\delta_s(e)\in\mathcal{B}.$ Thus $\#\mathcal{B}=2^r\cdot\#(\mathrm{Sign}^r)=4^r.$
\end{proof}
\begin{remark}
For the case $r=2,$ this is essentially the same basis obtained by Hironaka in \cite[\S 4, Theorem 4]{Hironaka2}.
\end{remark}

\subsection{Case \texorpdfstring{\SPgen}{(A)}}
Assume we are in the case \SPgen for this subsection. We will give another treatment of \cite[Theorem 1]{HironakaAlt}.

\begin{definition}
Consider $\Delta_r^{1/2}(x)\colon R[\Typ_r]\to R[x][\Typ_r]$ given by
\begin{equation*}
    \Delta_r^{1/2}(x)\defeq\sum_{\varepsilon\in\{0,1\}^{r}}q^{2\inv(\varepsilon)}x^{\lambda_1(\varepsilon)}t(\varepsilon).
\end{equation*}
We also let $\Delta_{k,r}^{1/2}\colon R[\Typ_r]\to R[\Typ_r]$ to be $\Delta_{k,r}^{1/2}\defeq [x^k](\Delta_r^{1/2}(x)).$
\end{definition}
\begin{proposition}
    $\Delta_{k,r}^{1/2}$ preserves $\mathrm{Gr}_r(\Rel).$
\end{proposition}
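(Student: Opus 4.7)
The plan is to mirror the argument used for the analogous statement in case \Ugen: reduce the preservation of $\Rel_r$ to the two-variable case, and invoke \Cref{preserveshalf} together with the elementary fact that the diagonal shift $t(1,1)$ preserves $\Rel_2$.

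Since $\Rel$ is a homogeneous two-sided ideal generated in degree $2$, $\Rel_r$ is spanned by elements of the form $\delta_1 \star \cdots \star \delta_{i-1} \star R \star \delta_{i+2} \star \cdots \star \delta_r$, with $R$ one of the generators $\Rel(a)$ or $\Rel(a,b)$ inserted at positions $(i, i+1)$. Since the $t_k(1)$ commute coordinate-wise, I would rewrite $\Delta_r^{1/2}(x) = \sum_\varepsilon q^{2\inv(\varepsilon)} x^{\lambda_1(\varepsilon)} \prod_k t_k(\varepsilon_k)$ by first fixing the outer coordinates $\varepsilon_k$ for $k\notin\{i,i+1\}$ and summing over $(\varepsilon_i,\varepsilon_{i+1})\in\{0,1\}^2$. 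The main combinatorial input is the decomposition
\begin{equation*}
    \inv(\varepsilon) = \inv_{\mathrm{out}}(\varepsilon) + \inv(\varepsilon_i,\varepsilon_{i+1}) + A\,(2 - \varepsilon_i - \varepsilon_{i+1}) + B\,(\varepsilon_i + \varepsilon_{i+1}),
\end{equation*}
where $A$ (resp.\ $B$) counts the $1$'s (resp.\ $0$'s) among the outer $\varepsilon_k$ with $k<i$ (resp.\ $k>i+1$); crucially the cross term depends only on $\varepsilon_i+\varepsilon_{i+1}$. Hence, after factoring out the outer contribution, the inner operator acting on positions $(i,i+1)$ is an $R$-linear combination of
\begin{equation*}
    1,\qquad x\bigl(q^2 t_i(1) + t_{i+1}(1)\bigr),\qquad x^2\, t(1,1).
\end{equation*}

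Each of these preserves $\Rel_2$: the identity trivially, $q^2 t_i(1) + t_{i+1}(1)$ by \Cref{preserveshalf} in case \SPgen, and $t(1,1)$ by inspection (it sends $\Rel(a)$ to $\Rel(a+1)$ and $\Rel(a,b)$ to $\Rel(a+1,b+1)$). Consequently the inner operator sends $R$ into $\Rel_2$; concatenating with the outer factors and then summing over the outer $\varepsilon_k$ keeps us in $\Rel_r$, which (after extracting the coefficient of $x^k$) shows that $\Delta_{k,r}^{1/2}$ preserves $\Rel_r$ for every $k$.

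The only real obstacle is verifying the inversion identity displayed above; it is elementary but requires a careful case split on $(\varepsilon_i,\varepsilon_{i+1})$ and on the position of the outer indices relative to $\{i,i+1\}$. Everything else is bookkeeping, and the argument is a direct analog of the one used in case \Ugen, with the extra $x^2\,t(1,1)$ contribution handled separately exactly as in the proof of \Cref{Deltapreserves}.
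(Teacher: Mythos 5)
Your proof is correct and is essentially the paper's argument: the paper's one-line proof ("this follows at once from \Cref{preserveshalf}") implicitly relies on exactly the reduction you spell out — decomposing $\inv(\varepsilon)$ so that the cross terms depend only on $\varepsilon_i+\varepsilon_{i+1}$, reducing to the two adjacent positions carrying a generator of $\Rel$, and invoking \Cref{preserveshalf} together with the (translation-invariance) fact that $t(1,1)$ preserves $\Rel_2$. Your write-up just makes the bookkeeping explicit.
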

\begin{proof}
This follows at once from \Cref{preserveshalf}.
\end{proof}
\begin{corollary}
    Let $S_{k,r}$ denote the adjoints of $\Delta_{k,r}^{1/2}.$ Then $\mathcal{S}(X/K,R)$ is free of rank one as a $R[S_{1,r},\ldots,S_{r,r},S_{r,r}^{-1}]$-module, generated by $\delta(0,\ldots,0).$
\end{corollary}
\begin{proof}
    This follows at once from \Cref{basisTheorem} and the above.
\end{proof}
Note that this equips $\mathcal{S}(X/K,R)$ with an $R$-algebra structure.

\begin{theorem}[Hironaka--Sato's spherical transform]
    Suppose $R$ contains a square root of $q,$ denoted by $\sqrt{q}.$ Let $\mu_1,\ldots,\mu_{2r},\nu_1,\ldots,\nu_r$ be indeterminates. Then we have a commutative diagram of $R$-algebras
    \begin{equation*}
        \begin{tikzcd}
            \mathcal{H}(G,K)\arrow[r,"\mathrm{Sat}"]\arrow[d,twoheadrightarrow]&R[\mu_1^{\pm1},\ldots,\mu_{2r}^{\pm1}]^{\mathrm{sym}}\arrow[d,twoheadrightarrow]\\
            \mathrm{S}(X/K,R)\arrow[r,"\mathrm{Sat}^{1/2}"]&R[\nu_1^{\pm1},\ldots,\nu_r^{\pm1}]^{\mathrm{sym}}
        \end{tikzcd}
    \end{equation*}
    where $\mathrm{Sat}$ denotes the Satake transform $\mathrm{Sat}(T_{k,r})=\sqrt{q}^{k(2r-k)}\sigma_k(\mu_1,\ldots,\mu_{2r}),$ $\mathrm{Sat}^{1/2}$ is defined by $\mathrm{Sat}^{1/2}(S_{k,r})\defeq q^{k(r-k)}\sigma_k(\nu_1,\ldots,\nu_r),$ and the rightmost map is induced by
    \begin{equation*}
        \mu_{2k-1}+\mu_{2k}\mapsto\left(\sqrt{q}+\frac{1}{\sqrt{q}}\right)\nu_k\quad\text{and}\quad\mu_{2k-1}\mu_{2k}\mapsto \nu_k^2\quad\quad\text{for }1\le k\le r.
    \end{equation*}
\end{theorem}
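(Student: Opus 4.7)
The plan is to reduce the commutativity of the diagram to a single polynomial identity of operators, then verify it by induction on $r$ using the recursion for $\Delta_{r+1}(y)$ proved in \Cref{inducedThm} together with an analogous recursion for $\Delta_{r+1}^{1/2}(y)$.

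Introduce the generating series (as operators on $R[\Typ_r]$)
\begin{equation*}
    A_r(x)\defeq\sum_{k=0}^{2r}q^{-k(2r-k)/2}\Delta_{k,r}(-x)^k,\qquad B_r(x)\defeq\sum_{k=0}^{r}q^{-k(r-k)}\Delta_{k,r}^{1/2}(-x)^k,
\end{equation*}
as well as their ``non-adjoint'' versions $\widetilde A_r(x)\in\mathcal{H}(G,K)[x]$ and $\widetilde B_r(x)\in R[S_{1,r},\ldots,S_{r,r},S_{r,r}^{-1}][x]$, where $T_{k,r}$ (resp.\ $S_{k,r}$) replaces $\Delta_{k,r}$ (resp.\ $\Delta_{k,r}^{1/2}$). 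By definition, $\mathrm{Sat}(\widetilde A_r(x))=\prod_{i=1}^{2r}(1-x\mu_i)$ and $\mathrm{Sat}^{1/2}(\widetilde B_r(x))=\prod_{k=1}^{r}(1-x\nu_k)$, and the right vertical arrow sends $\prod_i(1-x\mu_i)$ to $\prod_k(1-x\sqrt{q}\,\nu_k)(1-x\nu_k/\sqrt{q})$. Using that $\mathrm{Sat}^{1/2}$ is a ring isomorphism and $\mathcal{S}(X/K,R)$ is a free cyclic module generated by $\delta(0,\ldots,0)$, the commutativity of the diagram is equivalent, via the pairing $\pair{\cdot}{\cdot}$, to the operator identity
\begin{equation*}
    A_r(x)=B_r(\sqrt{q}\,x)\,B_r(x/\sqrt{q})\quad\text{on }R[\Typ_r]/\Rel_r.
\end{equation*}

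I would prove this identity directly on $R[\Typ_r]$ (before modding out by $\Rel_r$), by induction on $r$; the case $r=0$ is trivial. For the inductive step, the recursion
\begin{equation*}
    \Delta_{r+1}(y)=y^2\Delta_r(y)\star t_{r+1}(2)+(q+1)y\Delta_r(qy)\star t_{r+1}(1)+\Delta_r(q^2y)\star t_{r+1}(0)
\end{equation*}
from the proof of \Cref{inducedThm}, combined with $t_{r+1}(1)^2=t_{r+1}(2)$ and $\sqrt{q}+1/\sqrt{q}=q^{-1/2}(q+1)$, yields after collecting powers of $x$
\begin{equation*}
    A_{r+1}(x)=A_r(qx)\,\bigl(1-xq^{-r-1/2}t_{r+1}(1)\bigr)\bigl(1-xq^{-r+1/2}t_{r+1}(1)\bigr),
\end{equation*}
where the resulting quadratic in $t_{r+1}(1)$ factors because $R$ contains $\sqrt{q}$. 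Splitting the defining sum for $\Delta_{r+1}^{1/2}(y)$ according to its last coordinate gives the analogous but simpler
\begin{equation*}
    B_{r+1}(x)=B_r(qx)\,\bigl(1-xq^{-r}t_{r+1}(1)\bigr).
\end{equation*}
Substituting $x\mapsto\sqrt{q}\,x$ and $x\mapsto x/\sqrt{q}$, multiplying, using that $t_{r+1}(1)$ commutes with operators acting only on the first $r$ coordinates, and applying the inductive hypothesis to $A_r(qx)$,
\begin{equation*}
\begin{split}
    B_{r+1}(\sqrt{q}\,x)\,B_{r+1}(x/\sqrt{q})&=B_r(q\sqrt{q}\,x)\,B_r(qx/\sqrt{q})\,\bigl(1-xq^{-r+1/2}t_{r+1}(1)\bigr)\bigl(1-xq^{-r-1/2}t_{r+1}(1)\bigr)\\
    &=A_r(qx)\,\bigl(1-xq^{-r+1/2}t_{r+1}(1)\bigr)\bigl(1-xq^{-r-1/2}t_{r+1}(1)\bigr)=A_{r+1}(x).
\end{split}
\end{equation*}

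The main obstacle is establishing the closed form for $A_{r+1}(x)$: one must collect the three summands coming from the $\Delta_{r+1}$ recursion into the quadratic $x^2q^{-2r}t_{r+1}(2)-(q+1)q^{-r-1/2}x\,t_{r+1}(1)+1$ and then factor it as above. Once this is in place, the remainder is a routine bookkeeping of $q$-exponents, completely parallel to the Hermitian case treated earlier.
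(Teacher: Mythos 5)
Your proposal is correct and follows essentially the same route as the paper: the same generating series $A_r(x)$ and $B_r(x)$, the same reduction to the operator identity $A_r(x)=B_r(\sqrt{q}\,x)B_r(x/\sqrt{q})$, and the same induction via the one-step recursions for $A_{r+1}$ and $B_{r+1}$ with the quadratic in $t_{r+1}(1)$ factoring over $R[\sqrt{q}]$. The only difference is that you spell out in slightly more detail why commutativity of the diagram is equivalent to that operator identity, which the paper leaves implicit.
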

\begin{proof}
Consider
    \begin{equation*}
    A_r(x)\defeq\sum_{k=0}^{2r}\sqrt{q}^{-k(2r-k)}\Delta_{k,r}(-x)^k,\quad B_r(x)\defeq\sum_{k=0}^rq^{-k(r-k)}\Delta^{1/2}_{k,r}(-x)^k.
\end{equation*}
This is so that $\mathrm{Sat}(A_r(x))=\prod_{i=1}^{2r}(1-x\mu_i)$ and $\mathrm{Sat}^{1/2}(B_r(x))=\prod_{i=1}^r(1-x\nu_i).$ So it remains to see that $A_r(x)\isequal B_r(x\sqrt{q})B_r(x/\sqrt{q}).$

We have
\begin{equation*}
    A_{r+1}(x)=A_r(xq)\star\left(x^2q^{-2r}t_{r+1}(2)-xq^{-r}(q^{1/2}+q^{-1/2})t_{r+1}(1)+t_{r+1}(0)\right)
\end{equation*}
as well as
\begin{equation*}
    B_{r+1}(x)=B_r(xq)\star(-xq^{-r}t_{r+1}(1)+t_{r+1}(0)).
\end{equation*}
Now the claim follows by induction on $r$ as
\begin{equation*}
\begin{split}
    &(-xq^{-r+1/2}t_{r+1}(1)+t_{r+1}(0))\cdot(-xq^{-r-1/2}t_{r+1}(1)+t_{r+1}(0))\\
    &\quad\quad=\left(x^2q^{-2r}t_{r+1}(2)-xq^{-r}(q^{1/2}+q^{-1/2})t_{r+1}(1)+t_{r+1}(0)\right).\qedhere
\end{split}
\end{equation*}
\end{proof}
\begin{remark}
It is easy to see that, in the case $R=\C,$ the above map $\mathrm{Sat}^{1/2}$ agrees with Hironaka--Sato's spherical transform of \cite[Theorem 1]{HironakaAlt}. Namely, by \cite[(1.14) and Lemma 2.2]{HironakaAlt}, their transform $\mathcal{S}(X/K,\C)\rightiso\C[q^{\pm z_1},\ldots,q^{\pm z_n}]^{\mathrm{sym}}$ is exactly $\mathrm{Sat}^{1/2}$ under the identification $\nu_i\mapsto q^{z_i}.$
\end{remark}

\begin{appendix}
\section{Relation with relative Satake isomorphism}\label{Section-Appendix}
In this appendix we explain how the explicit description \Cref{inducedThm} follows from the inverse Satake transform \cite[Theorem 7.7]{Sakellaridis-InverseSatake} in the cases it applies.

Let $G$ be an unramified reductive group over a local non-archimedean field $F_0$ with residue cardinality $q_0$ and $X$ be a spherical $G$-variety. Let $K\subseteq G(F_0)$ be a hyperspecial subgroup and $A\subseteq G$ a Cartan subtorus. We denote $\Lambda\defeq X_*(A)$ its cocharacter group and $W$ its Weyl group. Under the assumptions of \cite{Sakellaridis-InverseSatake}, we have the following data
\begin{enumerate}
    \item A Cartan subtorus $A_X\subseteq X,$ for which we denote its cocharacter group by $\Lambda_X\defeq X_*(A_X).$ We denote $\Lambda_X^+\subseteq\Lambda_X$ the subset of antidominant elements. This is such that we have a relative Cartan decomposition
    \begin{equation*}
    \begin{split}
        X(F_0)/K&\longleftrightarrow\Lambda_X^+,\\
        x_{\check{\lambda}}K&\longleftrightarrow\check{\lambda}
        \end{split}
    \end{equation*}
    We use this to identify $\mathcal{S}(X/K,\C)=\C[\Lambda_X^+].$
    \item A root system $\check{\Phi}_X$ on $\Lambda_X$ with Weyl group $W_X$ and simple roots $\Delta_X.$
    \item A $W_X$-invariant multiset of triples $\Theta=\{(\check{\theta},\sigma_{\check{\theta}},r_{\check{\theta}})\}$ where $\check{\theta}\in \Lambda_X,$ $\sigma_{\check{\theta}}\in\{\pm1\}$ and $r_{\check{\theta}}\in\frac{1}{2}\Z.$ There is a certain sub-multiset $\Theta^+\subseteq\Theta$ of positive elements such that $\Theta=\Theta^+\sqcup\Theta^-$ for $\Theta^-=\{(-\check{\theta},\sigma_{\check{\theta}},r_{\check{\theta}})\colon (\check{\theta},\sigma_{\check{\theta}},r_{\check{\theta}})\in\Theta^+\}.$
\end{enumerate}
We consider two different normalizations for a basis of $\mathcal{S}(X/K,\C)=\C[\Lambda_X^+]$: for $\check{\lambda}\in\Lambda_X,$ we let $\delta_{\check{\lambda}}\in\C[\Lambda_X]$ denote the element corresponding to $\mathrm{char}(x_{\check{\lambda}}K),$ and we denote $e^{\check{\lambda}}\defeq q_0^{\langle\check{\lambda},\rho_{P(X)}\rangle}\cdot\delta_{\check{\lambda}}.$ This let us write the group structure on $\Lambda_X$ multiplicatively: $e^{\check{\lambda}_1}e^{\check{\lambda}_2}=e^{\check{\lambda}_1+\check{\lambda}_2}.$ We consider the action $W_X$ on $\C[\Lambda_X]$ also in terms of the latter basis: $w(e^{\check{\lambda}})\defeq e^{w(\check{\lambda})}.$
\begin{theorem}[Relative Satake isomorphism]
    Denote $\mathcal{H}_X\defeq\C[\Lambda_X]^{W_X}.$ There is a relative Satake isomorphism
    \begin{equation*}
        \mathrm{Sat}\colon \mathcal{S}(X/K,\C)\rightiso\mathcal{H}_X
    \end{equation*}
    which is equivariant for the $\mathcal{H}(G,K,\C)$-action, where the action on the right hand side is given by
    \begin{equation*}
        \mathcal{H}(G,K,\C)\iso\C[\check{A}]^W\to\C[\delta_{(X)}^{\frac{1}{2}}\check{A}_X]^{W_X}\simeq\mathcal{H}_X.
    \end{equation*}
\end{theorem}
\begin{theorem}[Inverse Satake transform]
    If $h\in\mathcal{H}_X=\C[\Lambda_X]^{W_X},$ then $\mathrm{Sat}^{-1}(h)\in\mathcal{S}(X/K,\C)=\C[\Lambda_X^+]$ is the function
    \begin{equation*}
        \mathrm{Sat}^{-1}(h)=\left(h\cdot L_X^{\frac{1}{2}}\right)\rvert_{\Lambda_X^+}
    \end{equation*}
    where
    \begin{equation*}
        L_X^{\frac{1}{2}}\defeq\frac{\prod_{\check{\gamma}\in\check{\Phi}_X^+}(1-e^{\check{\gamma}})}{\prod_{\check{\theta}\in\Theta^+}(1-\sigma_{\check{\theta}}q^{-r_{\check{\theta}}}e^{\check{\theta}})}
    \end{equation*}
    is expanded as a power series supported on the translates of a certain cone $\mathcal{C}_X.$
\end{theorem}
\begin{definition}
    We consider the pairings $(\cdot,\cdot)_e,(\cdot,\cdot)_\delta\colon\C[\Lambda_X]\times\C[\Lambda_X]\to\C$ given by
    \begin{equation*}
        (x,h)_?\defeq\langle x,h\cdot L_X^{\frac{1}{2}}\rangle_?,\quad ?\in\{e,\delta\}
    \end{equation*}
    where $\langle\cdot,\cdot\rangle_?\colon\C[\Lambda_X]\times\C[\Lambda_X]\to\C$ denote the pairings given by
    \begin{equation*}
        \langle e^{\check{\lambda}_1},e^{\check{\lambda}_2}\rangle_e=\begin{cases}1&\text{if }\check{\lambda}_1=\check{\lambda}_2\\0&\text{otherwise,}\end{cases},\quad\langle \delta_{\check{\lambda}_1},\delta_{\check{\lambda}_2}\rangle_\delta=\begin{cases}1&\text{if }\check{\lambda}_1=\check{\lambda}_2\\0&\text{otherwise.}\end{cases}
    \end{equation*}
    We denote $\mathrm{Rel}_?\subseteq\C[\Lambda_X]$ to be the left annihilator of $\C[\Lambda_X]^{W_X}$ under $(\cdot,\cdot)_?.$
\end{definition}
\begin{remark}\label{etodelta}
    If we consider the renormalization map $f\colon\C[\Lambda_X]\to\C[\Lambda_X]$ given by
    \begin{equation*}
        f(e^{\check{\lambda}})=q_0^{-\langle\check{\lambda},\rho_{P(X)}\rangle}\cdot\delta_{\check{\lambda}},
    \end{equation*}
    then we have $\langle x,h\rangle_e=\langle f(x),h\rangle_\delta$ and $(x,h)_e=(f(x),h)_\delta.$ In particular, $\mathrm{Rel}_\delta=f(\mathrm{Rel}_e).$
\end{remark}
\begin{proposition}
    Let $?\in\{e,\delta\}.$ We have that $\C[\Lambda_X^+]\hookrightarrow\C[\Lambda_X]$ induces an injection $\mathrm{str}_?^{-1}\colon \C[\Lambda_X^+]\hookrightarrow \C[\Lambda_X]/\mathrm{Rel}_?.$ Moreover, $\mathrm{Rel}_?$ is preserved by the action of $\mathcal{H}_X,$ and if $h\in\mathcal{H}_X,$ we have the commutative diagrams
    \begin{equation*}
        \begin{tikzcd}
            \mathcal{S}(X/K,\C)\arrow[d,"h^*_?"]\arrow[r,"\mathrm{str}_?^{-1}"]&\C[\Lambda_X]/\mathrm{Rel}_?\arrow[d,"\cdot\hat{h}^*_?"]\\
            \mathcal{S}(X/K,\C)\arrow[r,"\mathrm{str}_?^{-1}"]&\C[\Lambda_X]/\mathrm{Rel}_?
        \end{tikzcd}
    \end{equation*}
    where: i) $h^*_?$ is the adjoint of $h$ under $\langle\cdot,\cdot\rangle_?,$ ii) $\hat{h}^*_?\in\C[\Lambda_X]$ is the adjoint of $\hat{h}$ under $\langle\cdot,\cdot\rangle_?,$ namely:
    \begin{equation*}
        \hat{h}=\sum_{\check{\lambda}}a_{\check{\lambda}}e^{\check{\lambda}}\implies\hat{h}^*_e=\sum_{\check{\lambda}}a_{\check{\lambda}}e^{-\check{\lambda}}
    \end{equation*}
    and
    \begin{equation*}
        \hat{h}=\sum_{\check{\lambda}}b_{\check{\lambda}}\delta_{\check{\lambda}}\implies\hat{h}^*_\delta=\sum_{\check{\lambda}}b_{\check{\lambda}}\delta_{-\check{\lambda}}
    \end{equation*}
\end{proposition}
\begin{proof}
   The inverse Satake transform says that the composition
    \begin{equation*}
        \mathcal{S}(X/K,\C)\times\mathcal{S}(X/K,\C)\xrightiso{\mathrm{id}\times\mathrm{Sat}}\mathcal{S}(X/K,\C)\times\mathcal{H}_K\iso\C[\Lambda_X^+]\times\C[\Lambda_X]^{W_X}\xrightarrow{(\cdot,\cdot)_?}\C
    \end{equation*}
    is equal to the pairing $\langle\cdot,\cdot\rangle_?.$ This is nondegenerate, and thus induces $\mathrm{str}_?^{-1}\colon \C[\Lambda_X^+]\hookrightarrow \C[\Lambda_X]/\mathrm{Rel}_?.$ The commutative diagrams are now clear from this nondegeneracy.
\end{proof}
Now we explain how the module $\mathrm{Rel}_e$ can be determined in terms of the combinatorial data $\Theta.$
\begin{proposition}
    For $\check{\lambda}\in\Lambda_X$ and $\alpha\in\Delta_X,$ we consider the elements
    \begin{equation*}
        \mathrm{Rel}_e(\check{\lambda},\alpha)\defeq (B_\alpha)^*_e\cdot\left(e^{\check{\lambda}}+e^{\check{\alpha}+w_\alpha(\check{\lambda})}\right)
    \end{equation*}
    where
    \begin{equation*}
        B_\alpha\defeq\prod_{\substack{\check{\theta}\in\Theta^+\\-w_\alpha\check{\theta}\in\Theta^+}}(1-\sigma_{\check{\theta}}q_0^{-r_{\check{\theta}}}e^{\check{\theta}}).
    \end{equation*}
    Then for all $\check{\lambda}\in\Lambda_X$ and $\alpha\in\Delta_X,$ we have $\mathrm{Rel}_e(\check{\lambda},\alpha)\in\mathrm{Rel}_e.$
\end{proposition}
\begin{proof}
    The fact that $\mathrm{Rel}_e(\check{\lambda},\alpha)\in\mathrm{Rel}_e$ follows at once from noting that the element
    \begin{equation*}
        B_{\alpha}\cdot L_X^{\frac{1}{2}}=\frac{\prod_{\check{\gamma}\in\check{\Phi}_X^+}(1-e^{\check{\gamma}})}{\prod_{\substack{\check{\theta}\in\Theta^+\\w_\alpha\check{\theta}\in\Theta^+}}(1-\sigma_{\check{\theta}}q_0^{-r_{\check{\theta}}}e^{\check{\theta}})}
    \end{equation*}
    satisfies that
    \begin{equation*}
        w_\alpha\left(B_\alpha\cdot L_X^{\frac{1}{2}}\right)=\frac{w_\alpha\left(\prod_{\check{\gamma}\in\check{\Phi}_X^+}(1-e^{\check{\gamma}})\right)}{\prod_{\substack{\check{\theta}\in\Theta^+\\w_\alpha\check{\theta}\in\Theta^+}}(1-\sigma_{\check{\theta}}q_0^{-r_{\check{\theta}}}e^{\check{\theta}})}=\frac{(1-e^{-\check{\alpha}})\cdot\prod_{\substack{\check{\gamma}\in\check{\Phi}_X^+\\\check{\gamma}\neq\check{\alpha}}}(1-e^{\check{\gamma}})}{\prod_{\substack{\check{\theta}\in\Theta^+\\w_\alpha\check{\theta}\in\Theta^+}}(1-\sigma_{\check{\theta}}q_0^{-r_{\check{\theta}}}e^{\check{\theta}})}=-e^{-\check{\alpha}}\cdot B_\alpha\cdot L_X^{\frac{1}{2}},
    \end{equation*}
    as then for any $\hat{h}\in\C[\Lambda_X]^{W_X}$ we have that $((B_\alpha)^*_ee^{\check{\lambda}},\hat{h})_e$ is
    \begin{equation*}
        \langle e^{\check{\lambda}},\hat{h}\cdot B_\alpha\cdot L_X^{\frac{1}{2}}\rangle_e=\langle w_\alpha(e^{\check{\lambda}}),-\hat{h}\cdot e^{-\check{\alpha}}B_\alpha\cdot L_X^{\frac{1}{2}}\rangle_e=\langle -e^{w_\alpha(\check{\lambda})+\check{\alpha}},\hat{h}\cdot B_\alpha\cdot L_X^{\frac{1}{2}}\rangle_e=(-(B_\alpha)^*_ee^{w_\alpha(\check{\lambda})+\check{\alpha}},\hat{h})_e
    \end{equation*}
    and thus $(\mathrm{Rel}(\check{\lambda},\alpha),\hat{h})_e=0.$
\end{proof}

\begin{remark}
    In the situations where \cite{Sakellaridis} applies, we have
    \begin{equation*}
        B_\gamma=\begin{cases}
            1-q_0^{-\langle\check{\gamma},\rho_{P(X)}\rangle} e^{\check{\gamma}}&\text{type $G$,}\\
           (1-q_0^{\langle\check{\theta},\rho_{P(X)}\rangle-\langle\check{\rho},\gamma\rangle}e^{\check{\theta}})(1-q_0^{-\langle\check{\theta},\rho_{P(X)}\rangle}e^{-w_\gamma\check{\theta}}) &\text{type $T$ split,}\\
            (1+q_0^{\frac{1}{2}\langle\check{\gamma},\rho_{P(X)}\rangle-\langle\check{\rho},\gamma\rangle}e^{\frac{\check{\gamma}}{2}})(1-q_0^{-\frac{1}{2}\langle\check{\gamma},\rho_{P(X)}\rangle}e^{\frac{\check{\gamma}}{2}})&\text{type $T$ non-split,}\\
            1&\text{type $(U,\psi).$}
    \end{cases}
    \end{equation*}
    where in type $T$ split, $\check{\theta}\in\Lambda_X$ is a certain element such that $\check{\theta}-w_\gamma\check{\theta}=\check{\gamma}.$ For example, if $\gamma$ is a root of type $G,$ we have
    \begin{equation*}
        \mathrm{Rel}_e(\check{\lambda},\gamma)=e^{\check{\lambda}}-q_0^{-\langle\check{\gamma},\rho_{P(X)}\rangle}e^{\check{\lambda}-\check{\gamma}}+e^{\check{\gamma}+w_\gamma(\check{\lambda})}-q_0^{-\langle\check{\gamma},\rho_{P(X)}\rangle}e^{w_\gamma(\check{\lambda})},
    \end{equation*}
    which by \Cref{etodelta} correspond to the elements $\mathrm{Rel}_\delta(\check{\lambda},\gamma)\in\mathrm{Rel}_\delta$ given by
    \begin{equation*}
        \mathrm{Rel}_\delta(\check{\lambda},\gamma)=q_0^{-\langle\check{\lambda},\rho_{P(X)}\rangle}\left(\delta_{\check{\lambda}}-\delta_{\check{\lambda}-\check{\gamma}}-q_0^{\langle\check{\lambda}-w_\gamma(\check{\lambda})-\check{\gamma},\rho_{P(X)}\rangle}(\delta_{w_\gamma(\check{\lambda})}-\delta_{w_\gamma(\check{\lambda}-\check{\gamma})})\right).
    \end{equation*}
    Note that we can write the above exponent as
    \begin{equation*}
        \langle\check{\lambda}-w_\gamma(\check{\lambda})-\check{\gamma},\rho_{P(X)}\rangle=\langle\check{\gamma},\rho_{P(X)}\rangle\cdot\left(\langle\check{\lambda},\gamma\rangle-1\right).
    \end{equation*}
\end{remark}
\begin{example}
    In the case $X=\mathrm{Sp}_{2r}\backslash\mathrm{GL}_{2r},$ we have normalized spherical roots of type $G$
    \begin{equation*}
        \gamma_i=\alpha_{2i-1}+2\alpha_{2i}+\alpha_{2i+1},\quad 1\le i<r
    \end{equation*}
    where $\alpha_i=e_i-e_{i+1},$ and
    \begin{equation*}
        \rho_{P(X)}=(r-1)(e_1+e_2)+(r-3)(e_3+e_4)+\cdots+(1-r)(e_{2r-1}+e_{2r}).
    \end{equation*}
    Here $\Lambda\twoheadrightarrow\Lambda_X$ is the quotient $\bigoplus_{i=1}^{2r}\Z \check{e}_i\to\bigoplus_{i=1}^r\Z \check{e}_i'$ where $\check{e}_{2i-1},\check{e}_{2i}\mapsto \check{e}_i'.$ We have $\check{\gamma}_i=\check{e}_i'-\check{e}_{i+1}',$ and thus $\langle \check{\gamma}_i,\rho_{P(X)}\rangle=2.$ Thus if $\check{\lambda}=\sum_{i=1}^ra_i\check{e}_i',$ we have 
    \begin{equation*}
        q_0^{\langle\check{\lambda},\rho_{P(X)}\rangle}\mathrm{Rel}_\delta(\check{\lambda},\gamma_i)=\delta_{\check{\lambda}}-\delta_{\check{\lambda}-\check{\gamma}_i}-q_0^{2(a_{i+1}-a_i-1)}(\delta_{w_\gamma(\check{\lambda})}-\delta_{w_\gamma(\check{\lambda}-\check{\gamma}_i)})
    \end{equation*}
    which recovers \Cref{Reldef} in case the \SPgen.
\end{example}

\begin{example}\label{InvSat}
    Consider the case $X=H\backslash G$ where $(G,H)=(\mathrm{Res}_{F/F_0}\mathrm{GL}_r,U_r)$ with respect to $F/F_0$ an unramified quadratic extension. We denote $q_0$ the residue cardinality of $F_0.$ This case is not covered by \cite{Sakellaridis}, since $G$ is not split. Note that $X_F\defeq X\times_{F_0}F=\mathrm{GL}_r\backslash(\mathrm{GL}_r\times\mathrm{GL}_r)$ is the group case, and thus $\Lambda_X=\Lambda_{\mathrm{GL}_r}.$ Running the above discussion backwards, we can deduce an inverse Satake transform from our straightening relations. We consider the basis $e^{\check{\gamma}}$ of $\C[\Lambda_{\mathrm{GL}_r}]$ given by
    \begin{equation*}
        e^{\check{\gamma}}\defeq (-q_0)^{\langle\check{\lambda},\rho_{\mathrm{GL}_r}\rangle}\cdot\mathrm{char}(x_{\check{\lambda}}K)
    \end{equation*}
    under the relative Cartan decomposition $\mathcal{S}(X/K,\C)=\C[\Lambda_{\mathrm{GL}_r}^+].$ Then we have a relative Satake isomorphism
    \begin{equation*}
        \mathrm{Sat}\colon\mathcal{S}(X/K,\C)\simeq \C[\Lambda_{\mathrm{GL}_r}]^{W_{\mathrm{GL}_r}}
    \end{equation*}
    such that for $h\in\C[\Lambda_{\mathrm{GL}_r}]^{W_{\mathrm{GL}_r}}$ we have
    \begin{equation*}
        \mathrm{Sat}^{-1}(h)=\left(h\cdot L_X^{\frac{1}{2}}\right)\rvert_{\Lambda_{\mathrm{GL}_r}^+}
    \end{equation*}
    where
    \begin{equation*}
        L_X^{\frac{1}{2}}=\prod_{\check{\gamma}\in\check{\Phi}_{\mathrm{GL}_r}^+}\frac{1-e^{\check{\gamma}}}{1+q_0^{-1}e^{\check{\gamma}}}.
    \end{equation*}
    Moreover, by the discussion of \Cref{RSat-uH}, the map $\mathrm{Sat}$ is equivariant for the action of $\mathcal{H}(G,K,\C),$ where the action on the right hand side is given by the base change map
    \begin{equation*}
        \mathcal{H}(G,K,\C)\hookrightarrow\mathcal{H}(\mathrm{GL}_r(F_0),\mathrm{GL}_r(\O_{F_0}),\C)\iso\C[\Lambda_{\mathrm{GL}_r}]^{W_{\mathrm{GL}_r}}.
    \end{equation*}

\end{example}

\end{appendix}

\newpage
\begingroup
\bibliographystyle{alpha}
\bibliography{structural/references}
\endgroup
\input{output.ind}
\end{document}